\UseRawInputEncoding
\documentclass[11pt]{article}

\usepackage{amsmath,latexsym,amsthm,verbatim,amssymb,amsfonts,amscd,graphicx,graphics,mathtools,nccmath,braket,tipa,braket,tikz-cd,mathdots,nicematrix,setspace,dsfont,xcolor,url}
\usepackage[shortlabels]{enumitem}
\usepackage[affil-it]{authblk}
\topmargin0.0cm
\headheight0.0cm
\headsep0.0cm
\oddsidemargin0.0cm
\textheight23.0cm
\textwidth16.5cm
\footskip1.0cm

\makeatletter
\g@addto@macro\th@plain{\thm@headpunct{}}
\makeatother

\theoremstyle{plain}
\newtheorem{theorem}{Theorem}[section]
\newtheorem{corollary}[theorem]{Corollary}

\newtheorem{definition}[theorem]{Definition}

\DeclarePairedDelimiter\floor{\lfloor}{\rfloor}

\DeclarePairedDelimiter\abs{\lvert}{\rvert}
\DeclarePairedDelimiter\norm{\lVert}{\rVert}

\makeatletter
\let\oldabs\abs
\def\abs{\@ifstar{\oldabs}{\oldabs*}}

\let\oldnorm\norm
\def\norm{\@ifstar{\oldnorm}{\oldnorm*}}
\makeatother

\newcommand\permsim{\mathrel{\overset{\makebox[0pt]{\mbox{\normalfont\tiny\sffamily perm}}}{\simeq}}}
\newcommand\unitsim{\mathrel{\overset{\makebox[0pt]{\mbox{\normalfont\tiny\sffamily unitary}}}{\simeq}}}

\makeatletter
\newcommand*\bigcdot{\mathpalette\bigcdot@{.5}}
\newcommand*\bigcdot@[2]{\mathbin{\vcenter{\hbox{\scalebox{#2}{$\m@th#1\bullet$}}}}}
\makeatother

\providecommand{\keywords}[1]{\textbf{\textit{Keywords:}} #1}
\providecommand{\subjclass}[1]{\textbf{\textit{MSC Codes:}} #1}

\begin{document}
 
\title{Antidiagonal Operators, Antidiagonalization, Hollow Quasidiagonalization -- Unitary, Orthogonal, Permutation, and Otherwise -- and Symmetric Spectra}
\author{David Nicholus\thanks{drnicholus@gmail.com}}

\affil{Chicago Transit Authority\\
  William Rainey Harper College, Department of Mathematics}
  
\maketitle

\begin{abstract}
After summarizing characteristics of antidiagonal operators, we derive three direct sum decompositions characterizing antidiagonalizable linear operators -- the first up to permutation-similarity, the second up to similarity, and the third up to unitary similarity. Each corresponds to a unique quasidiagonalization. We prove the permutation-similarity direct sum decomposition defines a hollow quasidiagonalization of a traceless antidiagonalizable operator and gives the real Schur decomposition of a real antisymmetric antidiagonal operator. We use this to derive an orthogonal antidiagonalization of a general real antisymmetric operator. We prove the similarity direct sum decomposition defines the eigendecomposition of an antidiagonalizable operator that is diagonalizable, and we give a characterization of this eigendecomposition. We show it also defines the Jordan canonical form for a general antidiagonalizable operator. This leads to a further characterization of antidiagonalizable operators in terms of spectral properties and a characterization as the direct sum of traceless $2 \times 2$ matrices with the exception of a single $1 \times 1$ matrix as an additional summand for operators of odd size. We discuss numerous implications of this for properties of the square of an antidiagonalizable operator, a characterization of operators that are both diagonalizable and antidiagonalizable, nilpotency of antidiagonalizable operators, unitary diagonalizations of normal antidiagonal operators, symmetric and antisymmetric antidiagonalizations, and centrosymmetric diagonalizations and antidiagonalizations. Finally, we prove the unitary similarity direct sum decomposition defines the Schur decomposition, as well as a unitary quasidiagonalization, of a unitarily antidiagonalizable operator.
\end{abstract}

\keywords{antidiagonal matrix, skew-diagonal matrix, antidiagonalization, antidiagonalizable, symmetric spectrum, c-symmetric spectrum, hollow, pseudo-hollow, hollowization, hollowizable, quasidiagonal, quasidiagonalization, quasidiagonalizable, duodiagonalizable}\\

\subjclass{15-02, 15A18, 15A21, 15A23, 15A86, 15B99}


\section{Introduction} \label{Sect:intro} 

An \emph{antidiagonal matrix} (often called a skew-diagonal matrix) is a matrix whose only nonzero elements lie on its antidiagonal. Antdiagonal operators, and more prominently, their various similarity classes, find a wide range of applications.

Antidiagonal binary matrices measure the degree to which an input sequence (the test sequence) is a palindrome of a given sequence (the database sequence). The sum of the antidiagonal elements in the similarity matrix\footnote{Similarity in this context is not referring to matrix-theoretic similarity; it is terminology referring to the alikeness of two sequences.} (where one sequence is the header row and the other is the header column) indicates in how many places the database sequence agrees with the reverse of the test sequence. For example, this sum would be the complement of the Hamming distance between the database sequence and the reverse of the test sequence. Such a matrix can also be formalized as the adjacency matrix of a vertex-ordered digraph. If the alphabet from which the elements are drawn is equipped with a metric indicating a distance between characters, the corresponding \emph{real} antidiagonal matrix measures how close the test sequence is to being a palindrome of the database sequence with respect to the metric. This describes the antidiagonal of a similarity matrix from an application of a pattern-matching search such as one given by the Smith-Waterman (SW) algorithm. \cite{aK11} Pattern-matching applications are countless, but a notable example is determining alignment between RNA sequences. \cite{aK11} The SW algorithm can also be parallelized by operating along the antidiagonal. \cite{aK11}

Adjacency matrices of bipartite graphs are block antidiagonal matrices. Matrix functions of block antidiagonal matrices are used in solving matrix differential equations, for computing block exponential-dependent functions, and in control theory, as discussed in \cite{aS16}. Simplifying and computing common matrix functions of block antidiagonal matrices are also discussed in \cite{aS16}.

We will provide no citation, but we believe many mathematicians have, at some point in their mathematical career, wondered about the properties, use, and importance of antidiagonal matrices as well as some form of ``antidiagonalization'' and ``unitary antidiagonalization'' given the immense importance placed on diagonal matrices, diagonalization, and unitary diagonalization in mathematics. Part of the motivation of this paper is to satisfy that curiosity, as we have felt it as well. We also discuss pedagogical value to antidiagonal matrices and their various similarity classes in Section \ref{Subsect:jordan}.

An \emph{antidiagonalizable} matrix is a matrix that is similar to an antidiagonal matrix. A matrix has a \emph{symmetric spectrum} if and only if its spectrum remains invariant under multiplication by $-1$. Antidiagonalizable matrices play a significant role in combinatorics and graph theory. There has been a recent surge of interest in graphs whose adjacency matrices have a symmetric spectrum, with especial focus on signed graphs\footnote{A \emph{signed graph} is a graph where every edge is assigned a positive or negative value.}. \cite{wH23,fB18,eG20,gG22,wH22,zS21,fR19,pW23,sA18,eR04} Among many examples, adjacency matrices of signed bipartite graphs have symmetric spectra and so do the adjacency matrices of pairs of cospectral signed graphs where one is bipartite and the other is not. \cite{wH23} Adjacency matrices of undirected graphs (including simple graphs, multigraphs, signed graphs, and weighted graphs) are diagonalizable due to the fact that such matrices are real symmetric matrices, so are normal by the spectral theorem for normal matrices. We prove any diagonalizable matrix with a symmetric spectrum is antidiagonalizable. Thus, undirected graphs whose adjacency matrices have a symmetric spectrum are antidiagonalizable. In fact, an unsigned graph is bipartite if and only if its adjacency matrix has a symmetric spectrum.\footnote{This is false for signed graphs. \cite{wH23}} \cite{wH23} In light of the preceding discussion, this implies an unsigned graph is bipartite if and only if it is antidiagonalizable. Similarly, adjacency matrices of sign-symmetric graphs have symmetric spectra.\footnote{Some Seidel matrices are counterexamples to the converse. \cite{fB18}} \cite{fB18} Thus, adjacency matrices of sign-symmetric graphs are antidiagonalizable. The adjacency matrix of a digraph need not be diagonalizable, however a large class of digraphs have diagonalizable adjacency matrices. \cite{yL22} If such matrices have a symmetric spectrum, they are also antidiagonalizable.

There has been recent interest in conference matrices with symmetric spectra. \cite{wH22} In particular, if a conference matrix is symmetric, then it has a symmetric spectrum. \cite{wH22}

A square matrix whose main diagonal consists only of 0s is a \emph{hollow matrix}, and a similarity decomposition bringing a matrix into hollow form is a \emph{hollowization}. Notice any graph that is not a multigraph has an adjacency matrix that is hollow. Notice also every traceless antidiagonal matrix is hollow. One of the most significant theorems pertaining to hollow matrices is the theorem due to Fillmore in \cite{pF69} asserting every traceless real square matrix is orthogonally similar to a hollow matrix. This stems from earlier research by Horn and Schur. \cite{aH54,iS23} There has been significant research centered around hollow matrices in recent years. \cite{zC13,mF15,hK16,aN18,tD20} Specific structural forms of hollow matrices, mostly symmetric, have been studied in \cite{zC13,mF15,hK16}. Hollowization, unitary hollowization, simultaneous hollowization, and simultaneous unitary hollowization have received attention very recently. \cite{aN18,tD20} In \cite{aN18}, Neven and Bastin use simultaneous unitary hollowization for separability in quantum mechanics and prove a certain quantum state is separable if and only if given symmetric matrices are simultaneously unitarily hollowizable. In \cite{tD20}, Damm and Fa\ss bender use simultaneous hollowization to prove theorems for stabilization of linear systems by rotational forces or by noise, give a constructive proof of Brickman's theorem from \cite{lB61} that the real joint numerical range of two matrices is convex, and prove the stronger version of Fillmore's theorem that every traceless real square matrix is orthogonal-symplectically similar to a hollow matrix. However, from our exploration of the literature, we agree with Damm and Fa\ss bender that (of, and in, their paper \cite{tD20}) ``to the best of our knowledge, the current note is the first to treat hollowization problems from the matrix theoretic side''. Our contribution is partially an exploration of a specific structural form of hollow matrices, in particular, general \emph{hollow quasidiagonal}\footnote{see Definition \ref{D:quasidiag}} matrices, as well as a matrix-theoretic treatment of hollowization to this form. Quasidiagonal operators have not received very much attention \cite{gL75}, so this paper contributes to a discussion of them.

Every hollow tridiagonal Toeplitz matrix has unique eigenvalues, so is diagonalizable, and has a symmetric spectrum, so is antidiagonalizable. More generally, every tridiagonal Toeplitz matrix, after subtracting a scalar matrix, results in a hollow tridiagonal Toeplitz matrix, which is antidiagonalizable. \cite{dK99,sN13} (Equivalently, every tridiagonal Toeplitz matrix has a spectrum that is a constant shift away from being symmetric.) Tridiagonal Toeplitz matrices play a prominent role in solid-state physics and quantum mechanics due to their widespread use in tight-binding models, the tridiagonal matrix equation-of-motion method, the calculation of electronic band structure, Harper's model, and studying the quantum Hall effect, which have received recent attention. \cite{rM17,jA04,hK13,rF00,rD69,mC80,mC76,mA17oct29,mA17oct19} Many commonly studied block tridiagonal tight-binding Hamiltonians are also antidiagonalizable. \cite{hK13,bN20} Further information and novel applications, including to inverse eigenvalue problems, Tikhonov regularization, and the construction of Chebyshev polynomial-based Krylov subspace bases, are elaborated in \cite{sN13}.

Every antisymmetric matrix is hollow. We prove in Section \ref{Subsect:realSchur} that every real antisymmetric matrix is orthogonally antidiagonalizable. Real antisymmetric matrices generate the special orthogonal Lie algebra $\mathfrak{so}(n, \mathbb{R})$ -- the tangent space to the orthogonal group $O(n, \mathbb{R})$ at the identity. Thus, antisymmetric matrices are generators of infinitesimal rotations. \cite{mA98} Consequently, they have well-known applications in physics. Moreover, there is recent interest in the application of real antisymmetric matrices to neural networks and machine learning \cite{pA20,kH22,kM20}, as well as to numerical analysis \cite{sS20}.

In Section \ref{Sect:prelim} we review definitions, terminology, and background information that we refer to throughout this paper, with especial attention given to different types of similarity.

In Section \ref{Sect:props} we discuss antidiagonal matrices and their properties. We give formulas for products, inverses, and powers of antidiagonal matrices and discuss arguably the most important antidiagonal matrices -- \emph{exchange matrices}\footnote{see Definition \ref{D:excMat}}. Finally, we provide the standard form for antidiagonal matrices we refer to throughout this paper.

In Section \ref{Sect:permDirectSum} we prove there is a very strong isomorphism between traceless antidiagonalizations and hollow quasidiagonalizations; traceless antidiagonalizations and hollow quasidiagonalizations are permutation-similar. More generally, we prove permutation-similarity between antidiagonalizations and \emph{Q-pseudo-hollow quasidiagonalizations} -- quasidiagonalizations where the only allowed nonzero diagonal element is also the only nonzero element in its row and column. Due to the strength of this isomorphism, most statements about antidiagonalizations can be converted to statements about hollow quasidiagonalizations, and vice versa; they describe essentially the same algebraic object. In this section, we also present the first of our three direct sum decompositions -- the direct-sum decomposition of an antidiagonalizable operator up to permutation-similarity. We then discuss some implications this decomposition has for real matrices in Section \ref{Subsect:realSchur}. In particular, we show the decomposition gives the real Schur decomposition\footnote{see Definition \ref{D:realSchurDef}} of a real antisymmetric antidiagonal matrix. We also show the decomposition provides an orthogonal antidiagonalization of a general real antisymmetric matrix.

We begin Section \ref{Sect:simDirectSum} by discussing the spectral properties of antidiagonalizable operators, including the spectrum, determinant, and trace. We prove every antidiagonalizable matrix of even size has a symmetric spectrum, and every antidiagonalizable matrix of odd size has a c-symmetric spectrum\footnote{see Definition \ref{D:csymSpec}}. In Section \ref{Subsect:eigendecomp} we derive the eigendecomposition of diagonalizable antidiagonalizable matrices and provide a characterization of it, and it effectively serves as a lemma to the Jordan canonical decomposition that follows in Section \ref{Subsect:jordan} as well as the second of our three direct sum decompositions -- the direct-sum decomposition of an antidiagonalizable operator up to similarity. This leads to a characterization of antidiagonalizable operators. In particular, we prove a liner operator $M$ is antidiagonalizable if and only if $M$ can be expressed as a direct sum of traceless $2 \times 2$ matrices, with the exception of a single $1 \times 1$ matrix as an additional summand for odd $n$. In terms of spectral properties, $M$ is antidiagonalizable if and only if $M$ has a symmetric spectrum for even $n$ and a c-symmetric spectrum for odd $n$, whereby the only generalized eigenvectors of rank $\neq 1$ are of rank 2 with eigenvalues of 0. The square of an antidiagonalizable matrix has some interesting properties mentioned in Theorem \ref{T:antiEigendec}, but Section \ref{Subsect:square} delves into more detail, where we prove the square of an antidiagonalizable matrix is diagonalizable and give conditions for when the square of an antidiagonalizable matrix is normal, Hermitian, positive semidefinite Hermitian, and negative semidefinite Hermitian. In Section \ref{Subsect:duodiag} we discuss \emph{duodiagonalizable} matrices -- matrices that are both diagonalizable and antidiagonalizable. We first prove if an antidiagonalizable matrix is nonsingular, then it is diagonalizable. Then we show a diagonalizable matrix $M$ is antidiagonalizable if and only if $M$ has a symmetric or c-symmetric spectrum. Finally, we discuss the relationship between a matrix $M$ being antidiagonalizable, $M$ being nilpotent, and the ranks of generalized eigenvectors of $M$. In Section \ref{Subsect:normalAntidiag} we discuss normal antidiagonalizable matrices. In particular, we provide a characterization of the unitary diagonalization of antidiagonal matrices, which yields a sufficient condition for when unitarily antidiagonalizable matrices are normal, and therefore unitarily duodiagonalizable. Since much discussion is provided on the diagonalization of duodiagonalizable matrices in preceding sections, in Section \ref{Subsect:symAndAntisym} we discuss antidiagonalizations and unitary antidiagonalizations of duodiagonalizable matrices. The greater freedom available for antidiagonalization compared to diagonalization allows us to provide a symmetric antidiagonalization and two antisymmetric antidiagonalizations of a general duodiagonalizable matrix. All three antidiagonalizations are unitary if and only if the original matrix is normal. Finally, in Section \ref{Subsect:centro} we show the relationship centrosymmetric matrices have to diagonalizations and antidiagonalizations that generalizes the relationship exchange matrices have with diagonal matrices and antidiagonal matrices from Section \ref{Sect:props}. In particular, we show how centrosymmetric matrices allow us to transform problems about antidiagonalizable matrices into problems about diagonalizable matrices, and vice versa.

The final section, Section \ref{Sect:unitDirectSum}, begins with the Schur decomposition\footnote{see Definition \ref{D:schurDef}} of a $2 \times 2$ antidiagonal matrix with maximal degrees of freedom in the sense that no other Schur decomposition in more variables exists where all variables are independent. We then use this to derive an explicit quasidiagonal Schur decomposition and unitary quasidiagonalization of antidiagonal matrices. This result generalizes to a quasidiagonal Schur decomposition of unitarily antidiagonalizable matrices. Finally, we derive our third and final direct sum decomposition -- the direct-sum decomposition of an antidiagonalizable operator up to unitary similarity.

The three major direct sum decompositions and their corresponding quasidiagonalizations, as well as nearly all other similarity decompositions presented, are provided with explicit expressions for their corresponding similarity transformation operators. We also show how the similarity transformation operators are uniquely associated to the decompositions, and some even have no dependency on the operator being transformed. We explore the pattern to their structure but leave a more complete discussion for future research.

The direct sum decompositions are especially useful in quantum mechanics and quantum field theory, where symmetries are profound and pervasive. In particular, such decompositions are pertinent to irreducible representations of symmetry groups, which function as conceptual units of interpretation. \cite{nH18} Hilbert spaces can be decomposed into direct sums using observables and symmetries as the starting point. Moreover, ``For models with symmetry, the properties of irreducible representations constrain the possibilities of Hilbert space arithmetic, i.e. how a Hilbert space can be decomposed into sums of subspaces and factored into products of subspaces. Partitioning the Hilbert space is equivalent to parsing the system into subsystems, and these emergent subsystems provide insight into the kinematics, dynamics, and informatics of a quantum model.'' \cite{nH18}

All matrices in this paper are square matrices and all linear transformations are linear operators (isomorphisms) unless specified otherwise. Thus, under this implicit global assumption, we will omit the descriptor ``square''. A matrix of \emph{size} $n$ refers to an $n \times n$ square matrix. Many, if not most of the statements proven in this paper are true for matrices over a general field and can be extended to nonsquare matrices in some way. However, we leave such an exploration for future research, and this paper is primarily concerned with square matrices and linear operators over the field of complex numbers. Thus, if the domain of a matrix is not specified, we will assume it is a complex matrix, though we do discuss matrices over the field of real numbers, particularly in Section \ref{Sect:permDirectSum}.

\section{Preliminaries and Definitions} \label{Sect:prelim} 

We must first conjure some definitions related to matrix similarity.

\begin{definition} [Matrix Similarity] \label{D:sim}
	Square matrices $M$ and $N$ are {\bfseries similar} if and only if $M = V N V^{-1}$ for some nonsingular square matrix $V$. We call $V$ the {\bfseries similarity transformation matrix}. $M$ and $N$ are {\bfseries unitarily similar} if and only if $V$ is a unitary matrix. $M$ and $N$ are {\bfseries orthogonally similar} if and only if $V$ is an orthogonal matrix. $M$ and $N$ are {\bfseries permutation-similar} if and only if $V$ is a permutation matrix.
\end{definition}

Matrix similarity is foundational to linear algebra, as linear transformations expressed as matrices are defined up to similarity. Geometrically, unequal matrices that are similar express the same linear transformation with respect to different bases. 

Like similarity, unitary similarity corresponds to a change of basis, but in particular, a change from one orthonormal basis to another. These transformations are especially important in quantum mechanics where wavefunctions evolve unitarily, and unitary transformations preserve norms and probability amplitudes, so it is common to work in orthonormal bases. Unitary similarity transformations are also important in numerical linear algebra as their preservation of norms implies they tend to have higher numerical stability and better accuracy than nonunitary similarity transformations. In particular, the condition number of any unitary transformation is 1. Note also two matrices $M_1$ and $M_2$ are unitarily similar if and only if $(M_1,M_1^*)$ and $(M_2,M_2^*)$ are \emph{simultaneously similar}, that is, there is a nonsingular matrix $S$ such that $S M_1 S^{-1} = M_2$ and $S M_1^* S^{-1} = M_2^*$. \cite{hS91}

Recall spectral properties, such as the determinant, spectrum, characteristic polynomial, and trace as well as elementary divisors, invariant factors, minimal polynomial, Jordan canonical form, and rational canonical form are similarity invariants. In addition to all similarity invariants, singular values and the Schur normal form as well as being normal, symmetric, antisymmetric, Hermitian, and antihermitian are unitary similarity invariants. \cite{hS91}

Permutation-similarity draws an even finer distinction, as the elements of two permutation-similar matrices are the same though their positions differ, making the values of the elements permutation-similarity invariants. Because permutation matrices are orthogonal and unitary, \sloppy{permutation-similar} matrices are orthogonally similar and unitarily similar as well. This strength implies compositions of permutation-similarity transformations with unitary/orthogonal similarity transformations are unitary/orthogonal similarity transformations. Geometrically speaking, a permutation-similarity transformation amounts to essentially a relabeling/permuting of axes in a similarity-invariant way, preserving norms and orthogonality. Permutation-similarity is especially important in graph theory, as two (directed or undirected) graphs are isomorphic if and only if their adjacency matrices are permutation-similar.

The meaning of further variants, such as \emph{centrosymmetric similarity} or \emph{special orthogonal similarity}, should be evident.

\begin{definition} [Matrix Diagonalizability] \label{D:diag}
	A matrix $M$ is {\bfseries diagonalizable} if and only if $M$ is similar to a diagonal matrix $D$. The corresponding similarity decomposition $M = V D V^{-1}$ is a {\bfseries diagonalization} of $M$. $M$ is {\bfseries unitarily diagonalizable} if and only if $M$ is unitarily similar to a diagonal matrix $D$, and the corresponding unitary similarity decomposition $M = U D U^{-1}$ is a {\bfseries unitary diagonalization} of $M$. \cite{gDA03}
\end{definition}

The spectral theorem for normal matrices implies a matrix is normal if and only if it is unitarily diagonalizable. \cite{gDA03}

\emph{Quasidiagonal} matrices are the matrices that are, in a sense, closest to being diagonal without necessarily being diagonal.

\begin{definition} [Quasidiagonal Matrix] \label{D:quasidiag}
	A square matrix is {\bfseries quasidiagonal} if and only if it is a square-block diagonal matrix whose diagonal blocks are of size at most 2. \cite{bS06}
	\end{definition}
	
Notice quasidiagonal matrices are tridiagonal, but there are tridiagonal matrices that are not quasidiagonal. In particular, the super diagonal and sub diagonal of a quasidiagonal matrix cannot have two consecutive nonzero elements.

Quasidiagonal operators and their relationship with \emph{quasitriangular}\footnote{see Definition \ref{D:upQuasiTri}} operators in Hilbert spaces of infinite dimension, as well as topological properties, are discussed in \cite{gL75}.

The meanings of the terms \emph{quasidiagonalizable}, \emph{quasidiagonalization}, \emph{unitarily quasidiagonalizable}, and \emph{unitary quasidiagonalization} mirror the definitions given in Definition \ref{D:diag}.

Recall the string of elements perpendicular to the main diagonal of a matrix is the \emph{antidiagonal} of the matrix. We call a matrix whose only nonzero elements, if any, lie along the antidiagonal an \emph{antidiagonal matrix}.

\begin{definition} [Matrix Antidiagonalizability] \label{D:antidiag}
	A matrix $M$ is {\bfseries antidiagonalizable} if and only if $M$ is similar to an antidiagonal matrix $A$. The corresponding similarity decomposition $M = V A V^{-1}$ is an {\bfseries antidiagonalization} of $M$. $M$ is {\bfseries unitarily antidiagonalizable} if and only if $M$ is unitarily similar to an antidiagonal matrix $A$, and the corresponding unitary similarity decomposition $M = U A U^{-1}$ is a {\bfseries unitary antidiagonalization} of $M$.
\end{definition}

The matrices in a set of matrices are \emph{simultaneously diagonalizable} if and only if all matrices in the set are diagonalized by the same similarity matrix and are \emph{simultaneously antidiagonalizable} if and only if all matrices in the set are antidiagonalized by the same similarity matrix; the matrices are \emph{simultaneously unitarily diagonalizable} and, respectively, \emph{simultaneously unitarily antidiagonalizable} when the similarity matrix is unitary.

As we will see, there is much to say about matrices that are both diagonalizable and antidiagonalizable. We call such matrices \emph{duodiagonalizable}.

\begin{definition} [Duodiagonalizable] \label{D:duodiag}
	A matrix is {\bfseries duodiagonalizable} if and only if it is diagonalizable and antidiagonalizable.
\end{definition}

We distinguish duodiagonalizable from \emph{bidiagonalizable}, whereby a matrix is bidiagonalizable if and only if it is similar to a matrix whose nonzero elements lie on the main diagonal and only the super diagonal or only the sub diagonal.

We call a matrix with a main diagonal consisting only of $0$s a \emph{hollow matrix}. \cite{aN18,tD20} 

\begin{definition} [Matrix Hollowizability] \label{D:hollowize}
	A matrix $M$ is {\bfseries hollowizable} if and only if $M$ is similar to a hollow matrix $H$. The corresponding similarity decomposition $M = V H V^{-1}$ is a {\bfseries hollowization} of $M$. $M$ is {\bfseries unitarily hollowizable} if and only if $M$ is unitarily similar to a hollow matrix $H$, and the corresponding unitary similarity decomposition $M = U H U^{-1}$ is a {\bfseries unitary hollowization} of $M$. \cite{aN18,tD20}
\end{definition}

The definitions for \emph{simultaneously hollowizable} and \emph{simultaneously unitarily hollowizable} follow as expected. \cite{aN18}

We often call a similarity decomposition that is both a hollowization and a quasidiagonalization a \emph{hollow quasidiagonalization} and a matrix that has a hollow quasidiagonalization a \emph{hollow-quasidiagonalizable} matrix.

Whereas an eigendecomposition gives a basis of orthogonal eigenvectors, a hollowization gives a basis of orthogonal \emph{neutral vectors} -- vectors for which the quadratic form is 0. This is due to every real traceless matrix being orthogonally hollowizable, and it is one of the reasons hollow matrices and hollowizations are useful in asymptotic eigenvalue research and stabilization. \cite{pF69,tD20}

We will also require a notion that is a bit more general.

\begin{definition} [Pseudo-hollow] \label{D:pseudoHol}
	A matrix is {\bfseries pseudo-hollow} if and only if at most one element on its main diagonal is nonzero.
\end{definition}

We distinguish pseudo-hollow from \emph{almost hollow}, whereby an almost hollow matrix is traceless, and at most two elements on its main diagonal are nonzero. \cite{tD20} Notice every hollow matrix is pseudo-hollow, and a pseudo-hollow matrix is hollow if and only if it is traceless.

The meanings of variations of these terms, such as \emph{orthogonally pseudo-hollowizable} or \emph{simultaneously permutation-quasidiagonalizable}, should now be evident, and we will make use of such variations. 

\section{Antidiagonal Matrices and their Algebraic Properties} \label{Sect:props} 

Before discussing antidiagonalizable matrices, it is expedient to discuss antidiagonal matrices. We will also refer to some of their algebraic properties throughout this paper.

\begin{theorem} [Products, Inverses, and Powers of Antidiagonal Matrices] \label{T:propsOfAntiD}
	Let $k \in \mathbb{Z}$, and for any complex matrix $M$, define the reciprocal operator $\_^{-\mathds{1}}$ such that $M^{-\mathds{1}}$ is the matrix that takes the reciprocal of every nonzero element of $M$. Let $A$ and $B$ be complex antidiagonal matrices of size $n$ such that \begin{equation} \label{E:antiProd} 
A = \begin{pmatrix}
0	&			&	a_n	\\
	&	\iddots	&		\\
a_1	&			&	0
\end{pmatrix}, \hphantom{W}
B = \begin{pmatrix}
0	&			&	b_n	\\
	&	\iddots	&		\\
b_1	&			&	0
\end{pmatrix}.
\end{equation}

\begin{enumerate}[(a)]
\item The Product of Two Antidiagonal Matrices
\begin{equation} \label{E:antiProd} 
A B = \begin{pmatrix}
0	&			&	a_n	\\
	&	\iddots	&		\\
a_1	&			&	0
\end{pmatrix}
\begin{pmatrix}
0	&			&	b_n	\\
	&	\iddots	&		\\
b_1	&			&	0
\end{pmatrix}
= \begin{pmatrix}
a_n b_1	&			&0		\\
		&	\ddots	&		\\
0		&			&	a_1 b_n
\end{pmatrix}
\end{equation}
\item The Inverse of a Nonsingular Antidiagonal Matrix
\begin{equation} \label{E:antiInv} 
A^{-1} = (A^\top)^{-\mathds{1}} = (A^{-\mathds{1}})^{\top}
\end{equation}
This equation is valid, and $A$ is nonsingular, if and only if all elements $a_1,...,a_n$ on the antidiagonal are nonzero.
\item Integer Powers of Antidiagonal Matrices
\begin{equation} \label{E:antiPow}
A^k = \begin{dcases}
\begin{pmatrix}
a_n^{k/2} a_1^{k/2}	&					&		&					&0					\\
				&a_{n-1}^{k/2} a_2^{k/2}	&		&					&					\\
				&					&\ddots	&					&					\\
				&					&		&a_2^{k/2} a_{n-1}^{k/2}	&					\\
0				&					&		&					&a_1^{k/2} a_n^{k/2}
\end{pmatrix}	& \text{even $k$}	\\	\\
\begin{pmatrix}
0							&								&		&								&a_n^\frac{k-1}{2} a_1^\frac{k+1}{2}		\\
							&								&		&a_{n-1}^\frac{k-1}{2} a_2^\frac{k+1}{2}	&								\\
							&								&\iddots	&								&								\\
							&a_2^\frac{k-1}{2} a_{n-1}^\frac{k+1}{2}	&		&								&								\\
a_1^\frac{k-1}{2} a_n^\frac{k+1}{2}	&								&		&								&0
\end{pmatrix}	& \text{odd $k$}
\end{dcases}
\end{equation}
\end{enumerate}
\end{theorem}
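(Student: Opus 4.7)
The plan is to treat the three parts in order, with part (a) providing the computational engine that drives parts (b) and (c).

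For part (a), I would write $A$ and $B$ in index form as $A_{ij} = a_j\,\delta_{i+j,\,n+1}$ and $B_{ij} = b_j\,\delta_{i+j,\,n+1}$. Then $(AB)_{ij} = \sum_k A_{ik} B_{kj}$ forces $k = n+1-i$ from the first delta and $k = n+1-j$ from the second, so the product vanishes unless $i = j$, in which case $(AB)_{ii} = a_{n+1-i}\, b_i$. Reading this off for $i = 1, \ldots, n$ reproduces the claimed diagonal product. The key qualitative takeaway I will rely on afterwards is that the product of any two antidiagonal matrices is always diagonal.

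For part (b), I would first note that $\det A = \pm a_1 a_2 \cdots a_n$, so $A$ is nonsingular iff every $a_i$ is nonzero; this is also the precise condition under which the entrywise reciprocal $A^{-\mathds{1}}$ is well defined on the support of $A$. Given this, apply part (a) with $B$ taken to be the antidiagonal matrix whose antidiagonal entries are $b_i := 1/a_{n+1-i}$; part (a) then yields $AB = I_n$, so $A^{-1} = B$. It remains to identify $B$ with both $(A^\top)^{-\mathds{1}}$ and $(A^{-\mathds{1}})^\top$: the transpose permutes the $n$ antidiagonal positions via the involution $k \leftrightarrow n+1-k$, while the reciprocal operator acts pointwise, so the two operations commute on antidiagonal matrices, which is exactly the content of the second equality; a brief index check confirms the first.

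For part (c), I would induct on $|k|$, handling positive and negative exponents separately. The base case $k = 2$ is immediate from part (a), giving $A^2$ diagonal with $(A^2)_{ii} = a_i\, a_{n+1-i}$; for general even $k$, write $A^k = (A^2)^{k/2}$ and raise entrywise to obtain the stated $k/2$ exponents. For $k$ odd and positive, write $A^k = A \cdot A^{k-1}$ and use that a diagonal-times-antidiagonal product is antidiagonal, tracking which diagonal entry multiplies which antidiagonal entry to produce the stated exponents $(k\pm 1)/2$. The case $k = 0$ is trivial, and for $k < 0$ I would feed part (b)'s formula for $A^{-1}$ (itself antidiagonal) into the same induction, obtaining the formula with every $a_i$ replaced by $1/a_i$ in the symmetric manner dictated by part (b).

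The main obstacle, as I see it, is notational rather than conceptual: each multiplication by $A$ swaps the pairing between an index $i$ and its mirror $n+1-i$, so keeping the exponents $(k-1)/2$ and $(k+1)/2$ attached to the correct $a_i$'s across the inductive step requires care. To lock this down I would verify the odd-$k$ case on a small example such as $n = 2$ or $n = 3$ with $k = 3$ before committing to the general formula, which fixes the pairing once and for all.
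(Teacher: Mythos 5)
Your proposal is correct and is precisely the ``straightforward matrix arithmetic and mathematical induction'' that the paper's one-line proof gestures at: the delta-function computation $(AB)_{ii}=a_{n+1-i}\,b_i$ for part (a), the construction of $A^{-1}$ as an application of part (a) together with the observation that transpose and entrywise reciprocal commute for part (b), and the induction $A^k=A\cdot A^{k-1}$ for part (c) are all sound, and they cover negative exponents as you describe.

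One concrete caution about part (c), which makes the small-example check you propose essential rather than optional. Carrying out your inductive step with $(A^{k-1})_{ii}=(a_i a_{n+1-i})^{(k-1)/2}$ gives
\begin{equation*}
(A^k)_{i,\,n+1-i}\;=\;a_i^{(k-1)/2}\,a_{n+1-i}^{(k+1)/2},
\end{equation*}
e.g.\ for $n=2$, $k=3$ one gets $(A^3)_{1,2}=a_1a_2^2$ and $(A^3)_{2,1}=a_1^2a_2$. The displayed odd-$k$ matrix in the theorem instead places $a_{n+1-i}^{(k-1)/2}a_i^{(k+1)/2}$ at position $(i,n+1-i)$; it is the transpose of what the computation yields. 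So your derivation is right, but it will not reproduce the printed display verbatim --- the two exponents are attached to mirrored positions in the statement. This does not affect parts (a) and (b), nor the even-$k$ case, whose diagonal entries are symmetric in $a_i$ and $a_{n+1-i}$.
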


\begin{proof}
Each part can be confirmed using straightforward matrix arithmetic and mathematical induction.
\end{proof}

Note the product of two antidiagonal matrices is a diagonal matrix, making antidiagonal matrices convenient square roots of diagonal matrices and an alternative to diagonal square roots. As a corollary, antidiagonal matrices are closed under odd products; that is, the product of an odd number of antidiagonal matrices is an antidiagonal matrix.

As we will see, the reciprocal matrix operator $\_^{-\mathds{1}}$ will play a role in the diagonalization of antidiagonal matrices. Notice nonsingular diagonal matrices satisfy (\ref{E:antiInv}) as well. 

Because the base ring $\mathbb{C}$ is commutative, $A^k$ for even $k$ is persymmetric. Note part $(c)$ of Theorem \ref{T:propsOfAntiD} includes inverses as a subcase, as (\ref{E:antiPow}) is valid for negative integers $k$.

Antidiagonal matrices of even size are hollow, in which case antidiagonalizations are hollowizations, and matrices of odd size are pseudo-hollow, in which case antidiagonalizations are pseudo-hollowizations. Since antidiagonal matrices are hollow if and only if they are traceless, antidiagonal matrices are a class of matrices for which being hollow and being hollowizable are equivalent. As we will see, traceless antidiagonalizable matrices are a particularly convenient subclass of hollowizable matrices.

The simplest nonsingular antidiagonal matrix is the \emph{exchange matrix}.

\begin{definition} [Exchange Matrix] \label{D:excMat}
	The {\bfseries exchange matrix} $E_n$ of size $n$ is the antidiagonal matrix of size $n$ whose antidiagonal consists of 1s. \cite{rH12}
\end{definition}

Equivalently, the exchange matrix of size $n$ is the unique matrix that is both an antidiagonal matrix and a permutation matrix of size $n$. As with the identity matrix, when the dimensions are understood the subscript is omitted.

It is trivial to prove $E$ is an involutory, special orthogonal, symmetric permutation matrix, so $E = E^{-1} = \overline{E} = E^* = E^\top = E^{-\mathds{1}}$ (where $\_^{-\mathds{1}}$ is the reciprocal matrix operation from Theorem \ref{T:propsOfAntiD}). $E_n$ acts on an $n$-vector by reversing the order of the vector's elements. Notice also exchange matrices are even roots of identity matrices.

Note if $A$ is an antidiagonal matrix, then $A E$ and $E A$ are diagonal matrices, and $E A E$ is an antidiagonal matrix. Similarly, if $D$ is a diagonal matrix, $D E$ and $E D$ are antidiagonal matrices, and $E D E$ is a diagonal matrix. However, we can make stronger conclusions. Both left multiplication by $E$ and right multiplication by $E$ defines an isomorphism between the set of diagonal matrices and the set of antidiagonal matrices. Moreover, conjugation by $E$ defines an automorphism over these sets. In fact, for every antidiagonal matrix $A$, there exists a unique diagonal matrix $D$ such that $A = E D$ is a QR decomposition for $A$. Notice also, if $A$ is antidiagonal and $D$ is diagonal, $A = E D$ implies $A^\top = D E$. We will make use of these observations.

Like diagonal matrices, antidiagonal matrices are \emph{generalized permutation matrices}, so can be expressed as the product of a diagonal matrix and a permutation matrix. In particular, $A$ is an antidiagonal matrix if and only if $A = D E$ for some diagonal matrix $D$.

We consider the following general form for complex antidiagonal matrices and will explicitly refer to this definition for $A$ as (\ref{E:antidiagA}) when we use it.

\begin{equation} \label{E:antidiagA} 
A = \begin{psmallmatrix}
0				&	 			& 				&			&			&			&			&	a_{n-1}	\\
				&				& 				&			&			&			&	\iddots	&			\\
				&				&				&			&			&	a_3		&			&			\\
				&				&				&			&	a_1		&			&			&			\\
				&				&				&	a_2		&			&			&			&			\\
				&				&	a_4			&			&			&			&			&			\\
\hphantom{a_{n-1}}	&	\iddots		&				&			&			&			&			&			\\
a_n				&				&				&			&			&			&			&	0
\end{psmallmatrix} \text{: even $n$, \hphantom{W}}
A = \begin{psmallmatrix}
0		&	 			&					&				&			&			&	a_n					\\
		&				&					&				&			&	\iddots	&						\\
		&				&					&				&	a_3		&			&						\\
		&				&					&	a_1			&			&			&						\\
		&				&	a_2				&				&			&			&						\\
		&	\iddots		&					&				&			&			&	\hphantom{a_{n-1}}		\\
a_{n-1}	&				&					&				&			&			&	0
\end{psmallmatrix} \text{: odd $n$}.
\end{equation} 

In general, manipulating the antidiagonal of a matrix while preserving the underlying linear transformation is more difficult than manipulating the main diagonal. For example, permuting elements along the antidiagonal via unitary similarity transformations is not readily available. This can be seen by noting unitary similarity transformations preserve the trace, but the trace can vary across permutations along the antidiagonal for matrices of odd size. However, we will see under what permutations of the antidiagonal the underlying linear transformation is preserved, and unitarily so, in the next section.

\section{Antidiagonalizable Matrix Permutation-Similarity Direct Sum Decomposition, Quasidiagonalization, and Hollowization} \label{Sect:permDirectSum} 

\subsection{Permutation-Similarity Direct Sum Decomposition}

As we will show, useful building blocks of antidiagonal matrices, and up to similarity of various kinds, antidiagonalizable matrices, are \emph{transpose pairs}.

\begin{definition} [Transpose Pair] \label{D:transPair}
	Two indexed elements of a matrix form a {\bfseries transpose pair} if and only if one element is the reflection of the other across the main diagonal. That is, $(M)_{i,j}$ and $(M)_{i',j'}$ are a transpose pair of matrix $M$ if and only if $i' = j$ and $j' = i$.
\end{definition}

Notice a transpose pair consists of two copies of the same indexed element if and only if the element lies on the main diagonal.

Transpose pairs can be divided into two mutually exclusive and collectively exhaustive categories.

\begin{definition} [Defective Transpose Pair] \label{D:defTransPair}
	A transpose pair is {\bfseries defective} if and only if one element in the pair is 0 and the other is nonzero. A transpose pair is {\bfseries nondefective} if and only if it is not defective.
\end{definition}

With this, we can now derive the first direct sum decomposition -- the direct sum decomposition up to permutation-similarity.

Every complex antidiagonal matrix can be quasidiagonalized by a single, particularly nice constant permutation matrix that preserves hollowness/pseudo-hollowness, yielding a convenient direct sum decomposition.

\begin{theorem} [Permutation-Similarity Direct Sum Decomposition, Quasidiagonalization, and Hollowization of an Antidiagonal Matrix] \label{T:permQuasi} 
Let $A$ be a complex antidiagonal matrix of size $n$.

\begin{enumerate} [(a)] 
\item If $n$ is even, then $A$ is permutation-similar to a hollow quasidiagonal matrix.

If $n$ is odd, then $A$ is permutation-similar to a pseudo-hollow quasidiagonal matrix.

In particular, if $A$ is given by (\ref{E:antidiagA}), then $A = P Q P^{-1}$, where, for even $n$,
\[
P = \begin{pmatrix}
	&		&	&		&		&1	&0	\\
	&		&	&		&\iddots	&	&	\\
	&		&1	&0		&		&	&	\\
1	&0		&	&		&		&	&	\\
0	&1		&	&		&		&	&	\\
	&		&0	&1		&		&	&	\\
	&		&	&		&\ddots	&	&	\\
	&		&	&		&		&0	&1		
\end{pmatrix} 
\text{\hphantom{w} and \hphantom{w}}
Q = \begin{pmatrix}
0	&a_1		&	&		&		&	&		\\
a_2	&0		&	&		&		&	&		\\
	&		&0	&a_3		&		&	&		\\
	&		&a_4	&0		&		&	&		\\
	&		&	&		&\ddots	&	&		\\
	&		&	&		&		&0	&a_{n-1}	\\
	&		&	&		&		&a_n	&0		\\		
\end{pmatrix},
\]
so that $P$ is an even permutation matrix, thus is special orthogonal,
and for odd $n$,
\[
P = \begin{pmatrix}
	&		&	&		&1		&0		\\
	&		&	&\iddots	&		&		\\
	&1		&0	&		&		&		\\
1	&		&	&		&		&		\\
	&0		&1	&		&		&		\\
	&		&	&\ddots	&		&		\\
	&		&	&		&0		&1		\\		
\end{pmatrix} 
\text{\hphantom{w} and \hphantom{w}}
Q = \begin{pmatrix}
a_1	&	&		&		&		&		\\
	&0	&a_3		&		&		&		\\
	&a_2	&0		&		&		&		\\
	&	&		&\ddots	&		&		\\
	&	&		&		&0		&a_n		\\
	&	&		&		&a_{n-1}	&0		\\		
\end{pmatrix},
\]
so that $P$ is an odd permutation matrix.

\item Let $A$ be a complex antidiagonal matrix of size $n$ with center element $c$ if $n$ is odd. Let $\mathcal{\hat{T}}$ be the set of transpose pairs $\tau = \{\tau_1,\tau_2\}$ on the antidiagonal of $A$. A direct sum decomposition is given by
\begin{equation}
A \permsim \mathfrak{Q}
\end{equation}
where
\[
\mathfrak{Q} = \begin{dcases} 
       \hspace{11 pt} \bigoplus\limits_{\tau \in \mathcal{\hat{T}}} \begin{psmallmatrix} 0 & \tau_1 \\ \tau_2 & 0 \end{psmallmatrix}  					& \text{even n} \\
       \smashoperator[r]{\bigoplus\limits_{\tau \in \mathcal{\hat{T}} \setminus \{c,c\}}} \begin{psmallmatrix} 0 & \tau_1 \\ \tau_2 & 0 \end{psmallmatrix} \oplus c \, (1)		& \text{odd n}
       \end{dcases}
\]

and where $(1)$ is the identity matrix of size 1.
\end{enumerate}
\end{theorem}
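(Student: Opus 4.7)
The plan is to verify the similarity relation $P^{-1}AP = Q$ directly, by reading off from the displayed form of $P$ the underlying permutation $\sigma$ (so that $Pe_j = e_{\sigma(j)}$ and hence $(P^{-1}AP)_{ij} = A_{\sigma(i), \sigma(j)}$) and then checking entrywise that $A_{\sigma(i), \sigma(j)}$ reproduces $Q$. Once this is done, part (b) drops out immediately, since the resulting $Q$ is, by construction, the direct sum of its $2 \times 2$ blocks (and a $1 \times 1$ block in the odd case).

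From the pictured $P$ I would extract, for even $n$, the formulas $\sigma(2k-1) = n/2 + 1 - k$ and $\sigma(2k) = n/2 + k$ for $k = 1, \ldots, n/2$; and for odd $n$, $\sigma(1) = (n+1)/2$ together with $\sigma(2k) = (n+1)/2 - k$ and $\sigma(2k+1) = (n+1)/2 + k$ for $k = 1, \ldots, (n-1)/2$. A direct check then gives $\sigma(2k-1) + \sigma(2k) = n+1$ in the even case, and $\sigma(2k) + \sigma(2k+1) = n+1$ together with $\sigma(1) + \sigma(1) = n+1$ in the odd case; moreover, $\sigma(i) + \sigma(j) = n+1$ holds \emph{only} when $(i,j)$ lies within one of these paired index blocks or is $(1,1)$ in the odd case.

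Because $A$ is antidiagonal, $A_{r,s}$ can be nonzero only when $r + s = n+1$, so this pairing information immediately yields the block-diagonal support of $P^{-1}AP$: every entry outside the advertised $2 \times 2$ blocks (plus the $(1,1)$ entry for odd $n$) vanishes. The values inside the blocks are then read off from the standard form (\ref{E:antidiagA}) by plugging in $(\sigma(2k-1), \sigma(2k))$ and $(\sigma(2k), \sigma(2k-1))$, which locate exactly $a_{2k-1}$ and $a_{2k}$ (and analogously in the odd case, with $a_1$ appearing as the self-paired center). This recovers $Q$ verbatim. Part (b) is then a restatement: each diagonal block is precisely $\begin{psmallmatrix} 0 & \tau_1 \\ \tau_2 & 0 \end{psmallmatrix}$ for a transpose pair $\tau$ on the antidiagonal of $A$, and the $1 \times 1$ block for odd $n$ carries the self-paired center.

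The main obstacle is notational rather than conceptual: the staggering in (\ref{E:antidiagA})---odd-indexed $a_\ell$ above the center and even-indexed below---must be tracked carefully so that each pair $\{a_{2k-1}, a_{2k}\}$ lands in the intended block of $Q$. A secondary task is confirming the parity claim on $P$, which reduces to writing $\sigma$ in cycle form and counting transpositions using the explicit formulas for $\sigma$ derived above.
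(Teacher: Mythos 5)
Your approach is correct and is genuinely different from the paper's. The paper proves $A = PQP^{-1}$ by induction on $n$ in steps of two, bordering $P_n$ and $Q_n$ so as to peel off the outermost transpose pair $a_{n+1},a_{n+2}$ at each stage; you instead verify the identity in one shot by extracting the permutation $\sigma$ with $Pe_j = e_{\sigma(j)}$ and computing $(P^{-1}AP)_{ij} = A_{\sigma(i),\sigma(j)}$. Your formulas for $\sigma$ are the right ones (they agree with the displayed $P$ under the convention $P_{\sigma(j),j}=1$), the observation that $\sigma(i)+\sigma(j)=n+1$ exactly on the paired index blocks does pin down the support of $P^{-1}AP$ because $A_{r,s}\neq 0$ forces $r+s=n+1$, and substituting $(\sigma(2k-1),\sigma(2k))$ into (\ref{E:antidiagA}) does locate $a_{2k-1}$ and $a_{2k}$ as claimed (likewise for the odd case with the self-paired center). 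What the direct computation buys is an explicit closed form for $\sigma$, which is exactly what the parity clause requires and which the induction leaves implicit; what the induction buys is the nested structure of $P$ and $Q$, which the paper exploits again in later arguments of the same shape. Part (b) is immediate either way.

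One caution about the ``secondary task'' you defer: carried out with your own formulas, the parity count does \emph{not} confirm the theorem's sign claim for all odd $n$. For even $n$ the bordering step replaces $\sigma_n$ by its composition with an $(n+1)$-cycle of sign $(-1)^n=+1$, so $\operatorname{sgn}\sigma=+1$ for every even $n$, as stated. For odd $n$ the analogous step composes with an $(n+1)$-cycle of sign $-1$, so the sign alternates: starting from $+1$ at $n=1$ one gets $\operatorname{sgn}\sigma=(-1)^{(n-1)/2}$, so $P$ is an odd permutation matrix only when $n\equiv 3 \pmod 4$. Concretely, for $n=5$ your $\sigma$ is the $3$-cycle $(1\,3\,4)$, which is even. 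So for $n\equiv 1\pmod 4$ your verification will refute, rather than establish, the clause ``so that $P$ is an odd permutation matrix''; the similarity statement and part (b) are unaffected, but you should flag this rather than assert the parity claim as written.
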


\begin{proof}
$(a)$ A proof by induction is particularly enlightening as it reveals the structure of the transformations involved. Let a subscript denote the size of matrices $A$, $P$, and $Q$.

It is straightforward to prove the base case $A^{}_2 = P^{}_2 Q^{}_2 P_2^{-1}$. Assume $A^{}_n = P^{}_n Q^{}_n P_n^{-1}$.
\[
\begin{aligned}
P_{n+2}^{\vphantom{-1}} Q_{n+2}^{\vphantom{-1}} P_{n+2}^{-1}
&= \begin{pNiceArray}{ccccc}
0						& \hdots 	& 0 	&1		&0		\\
\Block{3-3}<\LARGE>{P_n} 	& 		& 	& 0 		&0		\\
						&       	&   	& \vdots  	&\vdots	\\
						&       	&   	& 0		&0		\\
0  						&  \hdots	& 0  	& 0 		&1
    \end{pNiceArray}
    \begin{pNiceArray}{ccc}
\Block{1-1}<\LARGE>{Q_n} 	  & 		&    		\\
						  & 0		&a_{n+1}	\\
  						  &a_{n+2}&0	
    \end{pNiceArray}
\begin{pNiceArray}{ccccc}
0		& \Block{3-3}<\LARGE>{P_n^{-1}}	& 		& 	& 0		\\
\vdots	& 							& 		&  	& \vdots	\\
0		&     							&   		&   	&  0		\\
1		& 0							&  \hdots 	&0	&	0	\\
0 		& 0							& \hdots  	&0	&	1
    \end{pNiceArray}\\
    & = \begin{pNiceArray}{ccccc}
0							& \hdots	& 0 	&0		&a_{n+1}	\\
\Block{3-3}<\LARGE>{P_n Q_n}	& 		& 	& 0		& 0		\\
							&       	&   	& \vdots  	&   \vdots	\\
							&       	&   	& 0		&	0 	\\
0							&  \hdots	&0  	& a_{n+2}	&0
    \end{pNiceArray}
\begin{pNiceArray}{ccccc}
0		& \Block{3-3}<\LARGE>{P_n^{-1}}	&		&	& 0		\\
\vdots	& 							&		&  	& \vdots	\\
0		&     							&   		&   	&  0		\\
1		& 0							&  \hdots 	& 0	&	0	\\
0 		&  0							& \hdots  	&0	&	1
    \end{pNiceArray}\\
        & = \begin{pNiceArray}{ccc}
0		&  										&a_{n+1}	\\
		& \Block{1-1}<\LARGE>{ P^{}_n Q^{}_n P_n^{-1}}	&		\\
a_{n+2}	&										&0
    \end{pNiceArray}\\
    & = A_{n+2}
    \end{aligned}
\]
The proof for odd $n$ is essentially the same.

Finally, note the determinant of a permutation matrix is the signature of the corresponding permutation, and it is straightforward to see the parity of the permutation corresponding to $P$ is equal to the parity of $n$.

$(b)$ Part $(b)$ is essentially an abstract algebraic restatement of part $(a)$. $Q$ uniquely determines and is uniquely determined by $\mathfrak{Q}$ up to a permutation of the diagonal blocks, where permutations of the diagonal blocks are in one-to-one correspondence with permutations of $\mathcal{\hat{T}}$ treated as an ordered multiset.
\end{proof}

For our discussion of this theorem, we will need a quick and dirty, but useful, definition.

\begin{definition} [Q-Pseudo-Hollow Quasidiagonal Matrix] \label{D:QPseudoHol}
	A {\bfseries Q-pseudo-hollow quasidiagonal} matrix is a pseudo-hollow quasidiagonal matrix whose only nonzero element on the main diagonal, if it exists, forms a $1 \times 1$ block; in other words, this element is the unique nonzero element it its row, in its column, and on the main diagonal. A matrix $M$ is {\bfseries Q-pseudo-hollow quasidiagonalizable} if and only if M is similar to a Q-pseudo-hollow quasidiagonal matrix.
\end{definition}

Equivalently, a matrix is Q-pseudo-hollow quasidiagonal if and only if it is of the form for $Q$ in Theorem \ref{T:permQuasi}, and a matrix is Q-pseudo-hollow quasidiagonalizable if and only if it is similar to such $Q$.

Recall permutation-similarity is a very strong similarity isomorphism, implying orthogonal similarity as well as unitary similarity. Not only does it preserve orthogonality and normality, but the basis vectors themselves remain invariant -- only to be rearranged. Geometrically, the positions of unlabeled axes remain invariant across a permutation-similarity transformation, so there is no way to distinguish permutation-similar linear transformations in unlabeled coordinate systems (whereby coordinates are treated as multisets instead of ordered tuples). 

The permutation-similarity transformation given explicitly in Theorem \ref{T:permQuasi} defines a one-to-one correspondence between the elements of a traceless antidiagonal matrix and of a hollow quasidiagonal matrix, as well as between their basis vectors. The transformation also defines an isomorphism between two important sets of matrices; it transforms every traceless antidiagonal matrix to a unique hollow quasidiagonal matrix, and by inversion, every hollow quasidiagonal matrix to a unique traceless antidiagonal matrix. Thus, for many practical applications, hollow quasidiagonal matrices and traceless antidiagonal matrices can be considered to be ``the same''. This is especially useful because nonsingular hollow quasidiagonal matrices are the simplest nonsingular hollow matrices. Up to permutation-similarity, we can see traceless antidiagonalizations and hollow quasidiagonalizations are equivalent as well. More generally, the permutation-similarity transformation defines a one-to-one correspondence between the set of antidiagonal matrices and the set of Q-pseudo-hollow quasidiagonal matrices.

For even $n$, the permutation-similarity transformation defines a permutation-hollowization and permutation-quasidiagonalization. For odd $n$, the permutation-similarity transformation defines a permutation-pseudo-hollowization and permutation-quasidiagonalization.

Some consideration of Theorem \ref{T:permQuasi} illuminates an interesting comparison between diagonal matrices and antidiagonal matrices; whereas diagonal matrices that differ only by a permutation of their diagonal elements are unitarily similar, antidiagonal matrices that differ only by a permutation of their antidiagonal \emph{transpose pairs} (excluding the transpose pair containing the center element for antidiagonal matrices of odd size) or a transposition of the elements within each pair are unitarily similar.\footnote{Therefore, the corresponding permutation group, for any antidiagonal matrix of size $n$, is $S_{\floor{\frac{n}{2}}} \times \mathbb{Z}_2$.} We will generalize this observation from unitary similarity to similarity in the discussion following Theorem \ref{T:antiEigendec}.

Theorem \ref{T:permQuasi}, by itself, pertains only to antidiagonal matrices and quasidiagonal matrices. An immediate corollary considerably broadens its conclusions to antidiagonalizable matrices and quasidiagonalizable matrices. Recall a matrix $M$ is hollow-quasidiagonalizable if and only if $M$ is similar to a hollow quasidiagonal matrix, and $M$ is Q-pseudo-hollow-quasidiagonalizable if and only if $M$ is similar to a Q-pseudo-hollow-quasidiagonal matrix.

\begin{corollary} [Permutation-Similarity Direct Sum Decomposition of an Antidiagonalizable Matrix, Hollow Quasidiagonalizations, and Antidiagonalizations] \label{C:quasiAnti}
\hphantom{}

\begin{sloppypar}
\begin{enumerate} [(a)] 
\item A traceless matrix $M$ is antidiagonalizable if and only if $M$ is hollow-quasidiagonalizable. A traceless matrix $M$ is permutation-/orthogonally/unitarily antidiagonalizable if and only if $M$ is permutation-/orthogonally/unitarily hollow-quasidiagonalizable.

A matrix $M$ is antidiagonalizable if and only if $M$ is Q-pseudo-hollow-quasidiagonalizable. A matrix $M$ is permutation-/orthogonally/unitarily antidiagonalizable if and only $M$ is permutation-/orthogonally/unitarily Q-pseudo-hollow-quasidiagonalizable.

\item A matrix $M$ is antidiagonalizable if and only if $M$ is similar to a direct sum decomposition into hollow, $2 \times 2$ matrices with an additional $1 \times 1$ matrix if $M$ is of odd size. A matrix $M$ is permutation-/orthogonally/unitarily antidiagonalizable if and only if $M$ is permutation-/orthogonally/unitarily similar to a direct sum decomposition into hollow, $2 \times 2$ matrices with an additional $1 \times 1$ matrix if $M$ is of odd size.
\end{enumerate}
\end{sloppypar}
\end{corollary}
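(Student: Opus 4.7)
The plan is to bootstrap from Theorem \ref{T:permQuasi}, which supplies an explicit permutation matrix $P$ (depending only on the size $n$) that conjugates every antidiagonal matrix to a Q-pseudo-hollow quasidiagonal matrix, and vice versa. Since similarity is transitive and implemented by matrix multiplication, composing $P$ with an arbitrary similarity transformation between $M$ and an antidiagonal or Q-pseudo-hollow quasidiagonal matrix should immediately yield the corollary. All of the listed equivalences will fall out by chasing this composition in both directions and then verifying that the composed transformation stays inside the prescribed class.

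For part (a), I would begin with the unrestricted statement. Suppose $M = V A V^{-1}$ with $A$ antidiagonal. Theorem \ref{T:permQuasi} gives $A = P Q P^{-1}$ with $Q$ Q-pseudo-hollow quasidiagonal, so $M = (VP) Q (VP)^{-1}$ exhibits $M$ as Q-pseudo-hollow quasidiagonalizable. Conversely, if $M = W Q W^{-1}$ with $Q$ Q-pseudo-hollow quasidiagonal, then $A := P Q P^{-1}$ is antidiagonal by Theorem \ref{T:permQuasi}, and $M = (W P^{-1}) A (W P^{-1})^{-1}$ is an antidiagonalization. For the traceless statement, I would observe that trace is a similarity invariant and that for both antidiagonal and Q-pseudo-hollow quasidiagonal matrices ``traceless'' coincides with ``hollow'' — in the odd case the sole potentially nonzero diagonal entry of $A$ (the center element) is exactly the $1 \times 1$ block of $Q$ — so $M$ traceless forces both $A$ and $Q$ to be hollow, and the argument restricts cleanly.

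To upgrade these equivalences to the permutation, orthogonal, and unitary settings, the point is that permutation matrices form a subgroup of both the orthogonal and unitary groups. Hence if $V$ (respectively $W$) lies in any of these three classes, the product $VP$ (respectively $WP^{-1}$) remains in the same class, and every step of the composition above preserves the prescribed kind of similarity. In the permutation-similarity case this is closure of a group under multiplication; in the orthogonal and unitary cases it is the fact that $P$ is itself orthogonal (and unitary).

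Part (b) is a direct translation of part (a) once one unpacks the definition: a Q-pseudo-hollow quasidiagonal matrix is by construction a direct sum of $2 \times 2$ hollow blocks together with at most one $1 \times 1$ block (appearing iff $n$ is odd), so the two formulations describe the same matrices. The main obstacle, such as it is, reduces to careful bookkeeping to confirm the closure of the permutation, orthogonal, and unitary similarity classes under composition with $P$; this is essentially immediate because $P$ itself is a permutation matrix. No additional machinery beyond Theorem \ref{T:permQuasi} is required.
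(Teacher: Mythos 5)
Your proposal is correct and takes essentially the same route as the paper: both factor the equivalence through the permutation-similarity $A = PQP^{-1}$ of Theorem \ref{T:permQuasi}, compose similarity transformations, and use that $P$ is a permutation matrix (hence orthogonal and unitary) to propagate the statement through all three restricted similarity classes, with part (b) read off as a restatement of part (a). Your write-up is if anything slightly more explicit than the paper's (spelling out the composed transformation matrices $VP$ and $WP^{-1}$ and the traceless/hollow bookkeeping), but there is no substantive difference in method.
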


\begin{proof}
$(a)$ Let $M$ be a matrix that is permutation-/orthogonally/unitarily antidiagonalizable to $A$. Since $A$ is permutation-similar to $Q$ given in Theorem \ref{T:permQuasi}, and permutation-similarity entails orthogonal similarity and unitary similarity, $M$ must be permutation-/orthogonally/unitarily similar to $Q$. In the same way, if $M$ is antidiagonalizable to $A$, then $M$ must be similar to $Q$.

Let $M$ be a matrix that is permutation-/orthogonally/unitarily similar to $Q$ of the form given in Theorem \ref{T:permQuasi}. Since $Q$ is permutation-antidiagonalizable, and permutation-similarity entails orthogonal similarity and unitary similarity, $M$ must be permutation-/orthogonally/unitarily antidiagonalizable as well. Similarly, if $M$ is similar to $Q$, then $M$ must be antidiagonalizable.

\[
\begin{tikzcd}
M \arrow[rd,"", "antidiagonalization" yshift=1ex] \arrow[d, "quasidiagonalization"' xshift = -2ex] &                  \\
Q                                \arrow[r, "permutation" yshift=-4ex,"similarity" yshift=-5.7ex,leftrightarrow] & A 
\end{tikzcd}
\]

$(b)$ This is essentially an abstract algebraic restatement of part $(a)$.
\end{proof}

Thus, antidiagonalizable matrices are quasidiagonalizable and Q-pseudo-hollow quasidiagonalizable matrices are antidiagonalizable. Additionally, due to the strength of the permutation-similarity transformation between an antidiagonal matrix $A$ and a Q-pseudo-hollow quasidiagonal matrix $Q$, the strength of a quasidiagonalization to $Q$ that factors through $A$ inherits the strength of the antidiagonalization to $A$, and the strength of an antidiagonalization to $A$ that factors through $Q$ inherits the strength of the quasidiagonalization to $Q$.

\subsection{The Real Schur Decomposition and Real Antisymmetric Matrices} \label{Subsect:realSchur} 

Theorem \ref{T:permQuasi} has important implications for real antisymmetric matrices, but it is expedient to review a few concepts first.

\begin{definition} [Schur Decomposition and Form] \label{D:schurDef}
	A {\bfseries Schur decomposition} for complex matrix $M$ is a decomposition $M = U T U^{-1}$ where $U$ is a unitary matrix and $T$, the {\bfseries Schur form} of $M$, is an upper triangular matrix whose diagonal consists of the eigenvalues of $M$. \cite{gG96}
\end{definition}

An important theorem in linear algebra proven by Issai Schur is that every complex matrix has a Schur decomposition. \cite{gG96}

\begin{definition} [Upper Quasitriangular Matrix] \label{D:upQuasiTri}
	Matrix $M$ is {\bfseries upper quasitriangular} if and only if it is a block upper triangular matrix whose blocks have size at most 2. \cite{gG96}
\end{definition}

Notice quasidiagonal matrices and upper triangular matrices are both upper quasitriangular.

A particularly important quasitriangular matrix form is the \emph{real Schur form}.

\begin{definition} [Real Schur Decomposition and Form] \label{D:realSchurDef}
	A {\bfseries real Schur decomposition} for real matrix $M$ is a decomposition $M = R T R^{-1}$ where $R$ is a real orthogonal matrix and $T$, the {\bfseries real Schur form} of $M$, is an upper quasitriangular matrix consisting of blocks of size 1 or blocks of size 2 having complex conjugate eigenvalues. \cite{gG96}
\end{definition}

Notice an unfortunate consequence of these (canonical) definitions is that a real Schur decomposition is \emph{not} necessarily a Schur decomposition. In particular, a Schur form must be upper-triangular, whereas a real Schur form need only be upper quasitriangular. Essentially, the extra restriction for a real Schur form to be real partially balances with the extra freedom granted to the real Schur form to be upper quasitriangular instead of upper triangular.

Theorem \ref{T:permQuasi} leads to the real Schur form of a real antisymmetric matrix.

\begin{corollary} [Real Schur Decomposition of an Antidiagonal Matrix] \label{C:realSchurDecomp}
Let $Q$ be the quasidiagonalization of antidiagonal matrix $A$ as in Theorem \ref{T:permQuasi}.
\begin{enumerate} [(a)]
\item $A$ is symmetric if and only if $Q$ is symmetric.
\item $A$ is antisymmetric if and only if $Q$ is antisymmetric.
\item If $A$ is real and antisymmetric, the decomposition in Theorem \ref{T:permQuasi} is the real Schur Decomposition of $A$, and $Q$ is the real Schur form of $A$.
\end{enumerate}
\end{corollary}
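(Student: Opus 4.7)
The plan is to leverage the explicit permutation-similarity $A = P Q P^{-1}$ from Theorem \ref{T:permQuasi} and the fact that the permutation matrix $P$ is real orthogonal (so $P^{-1} = P^\top$). First I would handle parts (a) and (b) simultaneously by taking transposes: $Q^\top = (P^{-1} A P)^\top = P^\top A^\top P = P^{-1} A^\top P$, so $Q^\top = \pm Q$ if and only if $A^\top = \pm A$. Equivalently, and more illuminatingly, one can read the conclusion directly off the block structure of $Q$. Its $2 \times 2$ diagonal blocks are $\left(\begin{smallmatrix} 0 & a_{2k-1} \\ a_{2k} & 0 \end{smallmatrix}\right)$ in the even case (and analogously in the odd case with the appropriate reindexing), where $\{a_{2k-1}, a_{2k}\}$ runs through the transpose pairs lying on the antidiagonal of $A$. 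Symmetry of such a block amounts to $a_{2k-1} = a_{2k}$, which is precisely symmetry of the corresponding transpose pair in $A$; antisymmetry amounts to $a_{2k-1} = -a_{2k}$, which is antisymmetry of that pair. For odd $n$ the single $1 \times 1$ block of $Q$ coincides with the central antidiagonal entry of $A$, which must vanish when $A$ is antisymmetric, so the correspondence still goes through.

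For part (c), I would start from (b): if $A$ is real and antisymmetric, then $Q$ is real and antisymmetric. It then remains to verify the three defining features of Definition \ref{D:realSchurDef}. First, the conjugating matrix $P$ of Theorem \ref{T:permQuasi} is a real permutation matrix, hence real orthogonal. Second, $Q$ is quasidiagonal, which is a fortiori upper quasitriangular with all blocks of size at most $2$. Third, each $2 \times 2$ diagonal block of $Q$ has the form $\left(\begin{smallmatrix} 0 & a \\ -a & 0 \end{smallmatrix}\right)$ with $a \in \mathbb{R}$; its characteristic polynomial is $\lambda^2 + a^2$, whose roots $\pm a i$ are complex conjugates (and coincide as the real number $0$ in the degenerate case $a = 0$, still an admissible pair). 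For odd $n$, antisymmetry of $A$ forces its central entry, and hence the lone $1 \times 1$ block of $Q$, to be $0$, which is a legitimate real eigenvalue block. Thus $A = P Q P^{-1}$ meets Definition \ref{D:realSchurDef}, confirming that it is a real Schur decomposition of $A$ and that $Q$ is a real Schur form of $A$.

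I expect the main obstacle to be purely bookkeeping: carefully matching the indexing in (\ref{E:antidiagA}) for $A$ to the diagonal blocks of $Q$ in both the even-$n$ and odd-$n$ cases, so that the transpose-pair structure along the antidiagonal of $A$ aligns block-by-block with the off-diagonal entries of $Q$. No nontrivial eigenvalue computation or structural theorem beyond Theorem \ref{T:permQuasi} and Definitions \ref{D:schurDef} through \ref{D:realSchurDef} is required.
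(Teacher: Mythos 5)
Your proposal is correct and follows essentially the same route as the paper: parts (a) and (b) are dismissed there as trivial (your transpose-conjugation identity $Q^\top = P^{-1}A^\top P$ and the block-by-block reading are exactly the details being elided), and part (c) is proved by the same three checks — $P$ is real orthogonal, $Q$ is upper quasitriangular, and each real antisymmetric $2\times 2$ block has characteristic polynomial $\lambda^2 + a^2$ with complex conjugate roots. Your explicit handling of the degenerate $a=0$ block and the central $1\times 1$ zero block in the odd case is a small but welcome addition the paper leaves implicit.
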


\begin{proof}
Parts $(a)$ and $(b)$ follow trivially.

$(c)$ Let $A$, $P$, and $Q$ be as in Theorem \ref{T:permQuasi} where $A$ is also real. Permutation matrices are real orthogonal matrices, so $P$ is a real orthogonal matrix. Since quasidiagonal matrices are upper quasidiagonal, $Q$ is upper quasidiagonal. Finally, because $Q$ is antisymmetric, it is clear the characteristic polynomials of the blocks of size 2 have roots that are complex conjugate pairs (using the conjugate root theorem).
\end{proof}

If $A$ is real but not necessarily antisymmetric, $Q$ satisfies all the conditions of being the real Schur form of $A$ except a block of size 2 may not have eigenvalues that are complex conjugate pairs. However, $Q$ will have the additional properties of being hollow/pseudo-hollow and quasidiagonal. As we will see, these properties are more useful for our purposes. 

A general Schur decomposition for a complex antidiagonal matrix is given in Theorem \ref{T:schur}.

The spectral theorem for real symmetric matrices implies every real symmetric matrix $M$ is orthogonally diagonalizable to a real matrix whose diagonal consists of the eigenvalues of $M$. There is a nice analogue to this for real antisymmetric matrices and antidiagonalization. Any real antisymmetric matrix $M$ is orthogonally antidiagonalizable to a real matrix whose antidiagonal elements are the eigenvalues of $M$ multiplied by $\imath$. In fact, the antidiagonal form can be chosen to be antisymmetric.

\begin{corollary} [Orthogonal Antidiagonalization of a Real Antisymmetric Matrix] \label{C:realAntisymAntidiag}
Every real antisymmetric matrix $M$ is orthogonally antidiagonalizable to a real antisymmetric antidiagonal matrix whose antidiagonal consists of the eigenvalues of $M$ multiplied by $\imath$.

The orthogonal antidiagonalization is special orthogonal for even $n$ and not special orthogonal for odd $n$.
\end{corollary}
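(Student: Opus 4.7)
The plan is to obtain the orthogonal antidiagonalization of $M$ by composing a real Schur decomposition with the inverse of the canonical permutation-similarity from Theorem \ref{T:permQuasi}. Write $M = R T R^\top$ with $R$ real orthogonal and $T$ in real Schur form. Orthogonal similarity preserves antisymmetry, so $T$ is antisymmetric; an antisymmetric upper quasitriangular matrix is forced to be block diagonal with $2\times 2$ antisymmetric diagonal blocks, since any nonzero entry strictly above the block structure would require a nonzero transpose-paired entry strictly below it. Consequently $T$ has the form $\bigoplus_j \begin{psmallmatrix} 0 & \lambda_j \\ -\lambda_j & 0 \end{psmallmatrix}$, together with a $1\times 1$ zero block when $n$ is odd, and the eigenvalues of $M$ are precisely the $\pm i\lambda_j$ (and $0$ when $n$ is odd). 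Swapping two rows and the corresponding columns inside a single $2\times 2$ block merely negates that block and so preserves this structure, so we may additionally assume $\det R = 1$.

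Now $T$ has exactly the shape of the quasidiagonal matrix $Q$ appearing in Theorem \ref{T:permQuasi}, so that theorem produces an antidiagonal matrix $A$ and a permutation matrix $P$ (depending only on $n$) with $T = P^{-1} A P$. Substituting gives
\[
M = R T R^\top = (RP^{-1})\, A\, (RP^{-1})^\top,
\]
so $W := RP^{-1}$ is real orthogonal and antidiagonalizes $M$ to $A$. By Corollary \ref{C:realSchurDecomp}(b), $A$ is antisymmetric since $T$ is, and the antidiagonal of $A$ consists of the transpose pairs $(\lambda_j,-\lambda_j)$ (plus a central $0$ for odd $n$). Because $i\cdot(\pm i\lambda_j) = \mp\lambda_j$, these antidiagonal entries are precisely $\imath$ times the eigenvalues of $M$, which establishes the first assertion.

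For the parity claim, $\det W = (\det R)(\det P^{-1}) = 1/\det P$ since $\det R = 1$. By Theorem \ref{T:permQuasi}, the permutation $P$ is even when $n$ is even and odd when $n$ is odd, so $\det W = 1$ for even $n$ while $\det W = -1$ for odd $n$; thus $W$ is special orthogonal exactly when $n$ is even. The main point requiring care is the structural step that the real Schur form of a real antisymmetric matrix is itself antisymmetric and quasidiagonal with blocks of the stated form, but this reduces to the two elementary observations that orthogonal similarity preserves antisymmetry and that an antisymmetric upper quasitriangular matrix is necessarily block diagonal; the remainder of the argument is assembly of existing results together with sign bookkeeping for the parity of $P$.
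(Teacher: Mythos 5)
Your proposal is correct and follows essentially the same route as the paper: both pass through the real quasidiagonal canonical form of a real antisymmetric matrix (with $2\times 2$ blocks $\begin{smallmatrix}0 & \lambda_j\\ -\lambda_j & 0\end{smallmatrix}$ and a trailing zero for odd $n$) and then apply the permutation $P$ of Theorem \ref{T:permQuasi}, with the same determinant bookkeeping giving special orthogonality exactly for even $n$. The only difference is that the paper cites this canonical form directly from the literature, whereas you derive it from the real Schur decomposition plus the observation that antisymmetry forces an upper quasitriangular matrix to be quasidiagonal, which makes your argument slightly more self-contained.
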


\begin{proof}
Every real antisymmetric matrix $M$ of size $n$ is special orthogonally quasidiagonalizable to a real matrix of the form
\[
\begin{pmatrix}
0	&r_1		&		&		&		\\
-r_1	&0		&		&		&		\\
	&		&\ddots	&		&		\\
	&		&		&0		&r_l		\\
	&		&		&-r_l		&0
\end{pmatrix}
\text{(even n) \hphantom{w} or \hphantom{w}}
\begin{pmatrix}
0	&r_1		&		&			&			&	\\
-r_1	&0		&		&			&			&	\\
	&		&\ddots	&			&			&	\\
	&		&		&0			&r_l			&	\\
	&		&		&-r_l			&0			&	\\
	&		&		&			&			&0	\\	
\end{pmatrix}
\text{(odd n)}
\]
for $l = \frac{n}{2}$ when $n$ is even and $l = \frac{n-1}{2}$ when $n$ is odd, where $r_j \ge 0$ for all $j = 0,...,l$, and where the eigenvalues of $M$ are of the form $\pm r_j \imath$. \cite{gG96,bZ62} These matrices are clearly permutation-similar to real antisymmetric antidiagonal matrices by Theorem \ref{T:permQuasi}. The determinant of the composite orthogonal similarity transformation is the product of the determinants of the transformations being composed, so is 1 for even $n$ and -1 for odd $n$.

\end{proof}

Another proof of Corollary \ref{C:realAntisymAntidiag} is outlined at the send of Section \ref{Subsect:symAndAntisym}.

The quasidiagonalizations used in Theorem \ref{T:permQuasi}, Corollary \ref{C:realSchurDecomp}, and Corollary \ref{C:realAntisymAntidiag} are closely related to decompositions given by Youla. Let $M$ be a complex square matrix. If $M$ is symmetric, there exists a unitary matrix $U$ such that $U M U^\top$ is a diagonal matrix. \cite{dY61,jS60} This result generalizes to quaternions as well. \cite{jS60} If $M$ is antisymmetric, then there exists a unitary matrix $U$ such that $U M U^\top$ is a quasidiagonal hollow matrix. \cite{dY61}

\section{Antidiagonalizable Matrix Similarity Direct Sum Decomposition, Jordan Decomposition, and Diagonalization} \label{Sect:simDirectSum} 

\subsection{Spectral Properties of Antidiagonalizable Matrices} \label{Subsect:specProp} 

As we will see, spectral properties are useful for characterizing antidiagonalizable matrices. To this end, the following definitions are helpful.

\begin{definition} [Symmetric Spectrum] \label{D:symSpec}
	A matrix $M$ has a {\bfseries symmetric spectrum} if and only if, for every eigenvalue $\lambda$ of $M$ with algebraic multiplicity $l$, $-\lambda$ is also an eigenvalue of $M$ with algebraic multiplicity $l$. \cite{wH23,wH22}
\end{definition}

Equivalently, matrix $M$ has a symmetric spectrum if and only the spectrum of $M$ remains invariant across multiplication by $-1$. \cite{eG20,fR19}

Notice this implies $M$ is traceless. Also, if $M$ is of odd size, then $M$ is singular, as 0 must be an eigenvalue.

\begin{definition} [c-Symmetric Spectrum] \label{D:csymSpec}
	A matrix of odd size has a {\bfseries c-symmetric spectrum} $\mathcal{S}$ (or a {\bfseries center-symmetric spectrum}) if and only if $\exists c \in \mathcal{S}$ such that $\mathcal{S} \setminus \{c\}$ is symmetric. We call $c$ the {\bfseries center} of spectrum $\mathcal{S}$.
	\end{definition}

Let $M$ be a matrix of odd size. If $M$ has a c-symmetric spectrum, then $tr(M) = c$. Furthermore, $M$ having a symmetric spectrum is equivalent to $M$ having a 0-symmetric spectrum, which is equivalent to $M$ being traceless.

If a matrix $M$ with a symmetric spectrum is diagonalizable, then any diagonalization of $M$ is permutation-similar to an antipersymmetric diagonal matrix -- that is, a matrix of the form

\begin{equation} \label{E:DCentAt0}
D = \begin{psmallmatrix}
\lambda_1		& 					&					&					&			0	\\
			& 		\lambda_2		&					&	 				&	 			\\
			&					&		\ddots		&					&				\\
			&					&					&	-\lambda_2 		&				\\
0			&					&					&					&			-\lambda_1
	\end{psmallmatrix}.
\end{equation}

We present two derivations for the spectrum, determinant, and trace of an antidiagonal matrix.

\begin{theorem} [Spectral Properties of an Antidiagonal Matrix] \label{T:specAntiD} 
A complex antidiagonal matrix $A$ of general form (\ref{E:antidiagA}) has the spectrum, determinant, and trace given by
\begin{equation} \label{E: antidiagSpec}
\begin{aligned}
spec(A) &= \begin{dcases} 
      \hphantom{\{a_1} \{-\sqrt{a_1}\sqrt{a_2},\sqrt{a_1}\sqrt{a_2},...,-\sqrt{a_{n-1}}\sqrt{a_n},\sqrt{a_{n-1}}\sqrt{a_n}\}  		& \text{even $n$} \\
      \{a_1,-\sqrt{a_2}\sqrt{a_3},\sqrt{a_2}\sqrt{a_3},...,-\sqrt{a_{n-1}}\sqrt{a_n},\sqrt{a_{n-1}}\sqrt{a_n}\} 				& \text{odd $n$}
   \end{dcases}\\
   \\
\abs{A} &= (-1)^{\floor{\frac{n}{2}}} a_1 ... a_n = (-1)^{\frac{n(n-1)}{2}} a_1 ... a_n \\
\\
tr(A) &= \begin{dcases} 
      0  		& \text{even $n$} \\
      a_1		& \text{odd $n$}
   \end{dcases} = -\frac{1}{2} a_1 ((-1)^n - 1).
\end{aligned}
\end{equation}
\end{theorem}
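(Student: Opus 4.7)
The plan is to leverage Theorem~\ref{T:permQuasi} to reduce all three spectral quantities to a block-level computation, and then offer a second, more direct derivation as the theorem statement's wording (``two derivations'') promises. Because permutation-similarity is a similarity, and because the spectrum, determinant, and trace are all similarity invariants, it suffices to compute these for the direct sum $\mathfrak{Q}$ from Theorem~\ref{T:permQuasi}(b). For a block-diagonal matrix the spectrum is the union of the block spectra, the determinant is the product of the block determinants, and the trace is the sum of the block traces, so the task reduces to analyzing a single $2 \times 2$ block $\bigl(\begin{smallmatrix} 0 & \tau_1 \\ \tau_2 & 0 \end{smallmatrix}\bigr)$ and, when $n$ is odd, the additional $1 \times 1$ block carrying the center element $a_1$.

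The $2 \times 2$ block has characteristic polynomial $\lambda^2 - \tau_1 \tau_2$, so its eigenvalues are $\pm\sqrt{\tau_1}\sqrt{\tau_2}$, its determinant is $-\tau_1\tau_2$, and its trace is $0$. Under the labeling of (\ref{E:antidiagA}), the transpose pairs are $(a_{2k-1}, a_{2k})$ for $k = 1, \ldots, n/2$ when $n$ is even, and $(a_{2k}, a_{2k+1})$ for $k = 1, \ldots, (n-1)/2$ when $n$ is odd with center $a_1$. Taking the union/product/sum over blocks then immediately reproduces the claimed spectrum, the determinant $(-1)^{\lfloor n/2 \rfloor} a_1 \cdots a_n$ (one sign per $2 \times 2$ block, of which there are $\lfloor n/2 \rfloor$), and the trace ($0$ for even $n$, $a_1$ for odd $n$). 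The identity $(-1)^{\lfloor n/2 \rfloor} = (-1)^{n(n-1)/2}$ is a short parity check on $n \bmod 4$.

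For the promised second derivation I would invoke Theorem~\ref{T:propsOfAntiD}(c) at $k = 2$: $A^2$ is diagonal with diagonal entries of the form $a_i a_{n+1-i}$, and since $\operatorname{spec}(A^2) = \{\lambda^2 : \lambda \in \operatorname{spec}(A)\}$, each eigenvalue of $A$ must be of the form $\pm\sqrt{a_i a_{n+1-i}}$. The sign ambiguity is then resolved by reading $\operatorname{tr}(A)$ directly off the form of $A$ (zero for even $n$, $a_1$ for odd $n$), which forces the $\pm$ pairing and the isolated odd-case eigenvalue; a pure alternative is a Laplace expansion of $\det(\lambda I - A)$ along the central row/column, which peels off a factor $(\lambda - a_1)$ in the odd case and then drops by induction to the $(n-2) \times (n-2)$ case. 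The only real obstacle is bookkeeping: keeping the subscripting of (\ref{E:antidiagA}) aligned with the transpose-pair structure of Theorem~\ref{T:permQuasi}(b) and reconciling the two equivalent sign formulas for the determinant; the eigenvalue identities themselves are one-line computations once the $2 \times 2$ block structure is in hand.
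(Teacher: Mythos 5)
Your first derivation is correct and is essentially the paper's own second proof: permutation-similarity via Theorem \ref{T:permQuasi} reduces everything to the $2 \times 2$ blocks $\begin{psmallmatrix} 0 & \tau_1 \\ \tau_2 & 0 \end{psmallmatrix}$ (plus the $1 \times 1$ center block for odd $n$), and the block-wise union/product/sum gives the spectrum, determinant, and trace. That alone proves the theorem.

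Your proposed second derivation, however, has a genuine gap in its primary form. Knowing that $A^2$ is diagonal with entries $a_{2k-1}a_{2k}$ (each appearing twice) tells you that each block contributes two eigenvalues of $A$, each equal to $\pm\sqrt{a_{2k-1}a_{2k}}$, so the block's contribution to the trace is $+2\sqrt{c_k}$, $0$, or $-2\sqrt{c_k}$ where $c_k = a_{2k-1}a_{2k}$. The single constraint $\operatorname{tr}(A) = 0$ does \emph{not} force every contribution to vanish once there are three or more blocks, because cancellation across blocks is possible: with $c_1 = 1$, $c_2 = 4$, $c_3 = 9$ the assignment $\{1,1,2,2,-3,-3\}$ has trace zero and the correct squares but is not the symmetric spectrum. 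So the trace cannot resolve the $\pm$ pairing on its own; you would need the Laplace-expansion/induction fallback you mention (or simply fall back on the block argument of your first derivation, which already settles the signs). For comparison, the paper's first proof avoids this issue entirely by computing the characteristic polynomial in factored form directly: it applies the Schur determinant formula $\abs{\lambda I - A} = \abs{A_1}\,\abs{A_4 - A_3A_1^{-1}A_2}$ in the odd case and the commuting-block identity $\abs{\lambda I - A} = \abs{A_1 A_1 - A_2 A_3}$ in the even case, obtaining $(\lambda - a_1)\prod_k(\lambda^2 - a_{2k}a_{2k+1})$ respectively $\prod_k(\lambda^2 - a_{2k-1}a_{2k})$, from which the paired signs are immediate.
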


\begin{proof}
We provide two proofs; the first uses properties of block matrices and a combinatorial argument. The second uses the results from Theorem \ref{T:permQuasi}, providing a demonstration of its utility.

\textbf{Proof 1:}
Let $n$ be odd, and define $A_1$, $A_2$, $A_3$, $A_4$ such that
\[
\lambda \mathit{I} - A = \left(\begin{array}{@{}c|c@{}}
    A_1 &A_2  \\	\hline
    A_3& A_4  \\	
  \end{array}\right)= 
  \left(\begin{array}{@{}cc|ccc@{}}
    \lambda&  		& \star 			& 		& a_n 	\\
     		& \ddots 	& \star 			&\iddots 	& 		\\	\hline
  	\star	&  \star	& \lambda-a_1 		& \star 	& \star	\\
		&\iddots  	&  \star			& \ddots 	& 		\\
a_{n-1} 	&  		&  \star			&  		& \lambda 
  \end{array}\right).
\]
Using the Schur determinant formula \cite{zF05,pP11}, and because $A_1$ is diagonal and thus, nonsingular, the characteristic polynomial of $A$ is given by the determinant
\[
\begin{aligned}
\abs{\lambda \mathit{I} - A} &= \abs{A_1} \abs{A_4 - A^{}_3 A^{-1}_1 A^{}_2}	\\
	&= \abs{\lambda \mathit{I}} \abs{A_4 - \lambda^{-1} A^{}_3 A^{}_2}		\\
	&= (\lambda - a_1) (\lambda^2 - a_2 a_3) ... (\lambda^2 - a_{n-1} a_{n}),
\end{aligned}
\]
which implies $A$ has the $a_1$-symmetric spectrum given in (\ref{E: antidiagSpec}).

The case for even $n$ can be proven with the same formula, but we will provide an attractive alternative. Let $n$ be even, and define $A_1$, $A_2$, $A_3$ such that
\[
\lambda \mathit{I} - A = \left(\begin{array}{@{}c|c@{}}
    A_1 &A_2  \\	\hline
    A_3& A_1  \\	
  \end{array}\right)= 
  \left(\begin{array}{@{}ccc|ccc@{}}
    \lambda&  		&		&			& 		& a_{n-1} 	\\
     		& \ddots 	&		&			&\iddots 	& 		\\
     		&  	 	&\lambda	&a_1			& 		& 		\\	\hline
		&  		&a_2		&\lambda		&  		& 		\\
		&\iddots  	&		&			& \ddots 	& 		\\
a_n 		&  		&		&			&  		& \lambda 
  \end{array}\right).
\]
Notice $A_1$, $A_2$, $A_3$ are square matrices of the same size, and notice $A_1 = \lambda \mathit{I}$, so $A_3$ and $A_1$ commute.\footnote{Scalar matrices commute with all matrices of the same size, since scalar matrices form the center of the algebra of matrices of the same size.} Because of this, by \cite{jS17}, the characteristic polynomial of $A$ is given by the determinant
\[
\begin{aligned}
\abs{\lambda \mathit{I} - A} &= \abs{A_1 A_1 - A_2 A_3}	\\
	&= (\lambda^2 - a_1 a_2) ... (\lambda^2 - a_{n-1} a_{n}),
\end{aligned}
\]
which implies $A$ has the symmetric spectrum given in (\ref{E: antidiagSpec}).

The determinant can be found by swapping rows to convert $A$ to a diagonal matrix, where each swap contributes a factor of -1, and then multiplying by the determinant of the remaining diagonal matrix. We can swap row $r$ with row $n - r + 1$ for $r = 1,...,\floor{\frac{n}{2}}$, giving $\floor{\frac{n}{2}}$ swaps. The determinant of the remaining diagonal matrix is simply the product $a_1,...,a_n$.

Finally, the trace is the sum of the eigenvalues, giving 0 for even $n$ and  $a_1$ for odd $n$.

\textbf{Proof 2:}
Let $Q$ be the permutation-quasidiagonalization of $A$ given in Theorem \ref{T:permQuasi}. Since $Q$ is a block diagonal matrix, its spectrum is the multiset union of the spectra of its blocks where multiplicities are additive, its determinant is the product of the determinants of its blocks, and its trace is the sum of the traces of its blocks. The $2 \times 2$ blocks of $Q$ are of the form $M = \begin{psmallmatrix} 0 & a_{j-1} \\ a_j & 0 \end{psmallmatrix}$, where $spec(M) = \{-\sqrt{a_{j-1}}\sqrt{a_j},\sqrt{a_{j-1}}\sqrt{a_j}\}$, $\abs{M} = -a_j a_{j-1}$, and $tr(M) = 0$. The $1 \times 1$ block in $Q$ of odd size has a spectrum, determinant, and trace equal to the only element present. With all this, and since $A$ and $Q$ are similar, the spectral properties of $Q$ are the spectral properties of $A$ and are given by (\ref{E: antidiagSpec}).
\end{proof}

Notice an antidiagonal matrix of even size has a symmetric spectrum, while one of odd size has a c-symmetric spectrum (the center $c = a_1$ in (\ref{E:antidiagA})). The terms ``center-symmetric'' and ``c-symmetric'' are derived from the fact that an antidiagonal matrix of odd size with a c-symmetric spectrum is structurally symmetric about the center element $c$ of the matrix. The center will always be an eigenvalue, so the matrix has a symmetric spectrum if and only if its center element is 0.

Notice also the determinant of an antidiagonal matrix is the product of the antidiagonal terms (up to a sign) akin to the way the determinant of a diagonal matrix is the product of the diagonal terms. 

We have the following natural corollary that broadens the conclusions of Theorem \ref{T:specAntiD} to antidiagonalizable matrices.

\begin{corollary} [Spectral Properties of an Antidiagonalizable Matrix] \label{C:specAntiDble} 
Let complex matrix $M$ of size $n$ be antidiagonalizable.

If $n$ is even, then $M$ has a symmetric spectrum.

If $n$ is odd, then $M$ has a c-symmetric spectrum. In this case, $M$ has a symmetric spectrum if and only if $M$ is traceless.

In particular, if $M$ is antidiagonalizable to general antidiagonal matrix $A$ given in (\ref{E:antidiagA}), then $spec(M) = spec(A)$, $\abs{M} = \abs{A}$, and $tr(M) = tr(A)$ as given by (\ref{E: antidiagSpec}).
\end{corollary}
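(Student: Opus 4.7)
The plan is to leverage the fact that the relevant quantities are all similarity invariants, reducing the corollary to a direct application of Theorem \ref{T:specAntiD}. Since $M$ is antidiagonalizable, there exist an antidiagonal matrix $A$ and a nonsingular matrix $V$ with $M = V A V^{-1}$. The spectrum, characteristic polynomial, determinant, and trace are all invariant under similarity (as noted in Section \ref{Sect:prelim}), so I would begin by recording $spec(M) = spec(A)$, $\abs{M} = \abs{A}$, and $tr(M) = tr(A)$. Plugging in the explicit formulas from (\ref{E: antidiagSpec}) in Theorem \ref{T:specAntiD} then yields the last sentence of the corollary for any choice of antidiagonal $A$ to which $M$ is similar.

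Next I would establish the symmetry statements. For even $n$, Theorem \ref{T:specAntiD} shows that the eigenvalues of $A$ come in pairs $\{-\sqrt{a_{2j-1}}\sqrt{a_{2j}},\sqrt{a_{2j-1}}\sqrt{a_{2j}}\}$, so $spec(A)$ is invariant under multiplication by $-1$ with matching algebraic multiplicities; this is precisely the defining property of a symmetric spectrum in Definition \ref{D:symSpec}. For odd $n$, the same enumeration gives $spec(A) = \{a_1\} \cup \bigcup_{j=1}^{(n-1)/2}\{-\sqrt{a_{2j}}\sqrt{a_{2j+1}},\sqrt{a_{2j}}\sqrt{a_{2j+1}}\}$, so removing the center element $a_1$ leaves a symmetric multiset, which is exactly the definition of a c-symmetric spectrum (Definition \ref{D:csymSpec}) with center $a_1$.

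Finally, for the traceless characterization in the odd case, I would invoke the observation made right after Definition \ref{D:csymSpec}: if $M$ has a c-symmetric spectrum with center $c$, then $tr(M) = c$, because the non-center eigenvalues cancel in pairs. Hence $c = tr(M) = tr(A) = a_1$ by Theorem \ref{T:specAntiD}, and the spectrum is 0-symmetric (i.e., symmetric) if and only if $c = 0$, equivalently $tr(M) = 0$.

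The argument is essentially a bookkeeping exercise built on Theorem \ref{T:specAntiD} plus similarity invariance, so no real obstacle is anticipated. The only point requiring minor care is keeping the two conventions straight — that a symmetric spectrum forces tracelessness, while a c-symmetric spectrum only specifies that the trace equals the center — so that the biconditional in the odd case is stated correctly. No separate verification for determinant or trace is needed beyond citing similarity invariance and Theorem \ref{T:specAntiD}.
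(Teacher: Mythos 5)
Your proposal is correct and follows exactly the paper's route: the paper's entire proof is the one-line observation that similar matrices share the same spectral properties, with the symmetric/c-symmetric classification of $spec(A)$ already noted in the discussion following Theorem \ref{T:specAntiD}. Your write-up simply makes explicit the bookkeeping the paper leaves implicit, including the correct handling of the traceless biconditional in the odd case.
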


\begin{proof}
Similar matrices share the same spectral properties.
\end{proof}

\subsection{A Characterization of the Eigendecomposition of Antidiagonal Matrices} \label{Subsect:eigendecomp} 

We now provide necessary and sufficient conditions for when a general complex antidiagonal matrix $A$ is diagonalizable as well as an explicit, convenient diagonalization for $A$ when $A$ is diagonalizable.\footnote{Complex antidiagonal matrices that are diagonalizable need not be \emph{unitarily} diagonalizable (i.e. normal). In general, relatively speaking, few are (see Theorem \ref{T:unitDiagAntiD}), offering convenient and simple examples of matrices of any size that are diagonalizable but not unitarily so.}

\begin{theorem} [Characterization of the Eigendecomposition of an Antidiagonal Matrix] \label{T:antiEigendec} 
Let $A$ be a complex antidiagonal matrix of size $n$, and let matrix function $\Lambda = \Lambda(A)$ be defined as below.
The following are equivalent.
\begin{enumerate}[(a)]
\item A is diagonalizable.
\item $\Lambda$ diagonalizes $A$ into a diagonal matrix $D$ with a symmetric spectrum for even $n$ and a c-symmetric spectrum for odd $n$.
\item No transpose pair of elements in $A$ is defective.
\item $\Lambda$ is nonsingular.
\end{enumerate}
When any of these conditions are met, whereby $D$ is a diagonalization of $A$, we have the following.
\begin{enumerate}[i.]
\item If $A$ is traceless, then $D$ is permutation-similar to an antipersymmetric diagonal matrix with the same elements, including multiplicities.
\item $A^2$ and $D^2$ are permutation-similar diagonal matrices.
\end{enumerate}
When $A$ is diagonalizable, an explicit eigendecomposition is given by $A = \Lambda D \, \Lambda^{-1}$ as follows.
For even size $n$ and general antidiagonal matrix $A$ given by (\ref{E:antidiagA}), $D$ is the diagonal spectral matrix with main diagonal
\[
(-\sqrt{a_1}\sqrt{a_2},\sqrt{a_1}\sqrt{a_2},-\sqrt{a_3}\sqrt{a_4},\sqrt{a_3}\sqrt{a_4},...,-\sqrt{a_{n-1}}\sqrt{a_n},\sqrt{a_{n-1}}\sqrt{a_n}),
\]
and the modal matrix is
\begin{equation} \label{E: evenLam}
\Lambda =    \begin{psmallmatrix}
0						&	0 					& 	0					&	0					&\hdots\vphantom{\frac{\sqrt{a_1}}{\sqrt{a_2}}}	&-\frac{\sqrt{a_{n-1}}}{\sqrt{a_n}}&\frac{\sqrt{a_{n-1}}}{\sqrt{a_n}}	\\
0						&	0					& 	0					&	0					&\iddots\vphantom{\frac{\sqrt{a_1}}{\sqrt{a_2}}}	&0						&0							\\
0						&	0					&-\frac{\sqrt{a_3}}{\sqrt{a_4}}	&\frac{\sqrt{a_3}}{\sqrt{a_4}}	&\hdots\vphantom{\frac{\sqrt{a_1}}{\sqrt{a_2}}}	&0						&0							\\
-\frac{\sqrt{a_1}}{\sqrt{a_2}}	&\frac{\sqrt{a_1}}{\sqrt{a_2}}	&	0					&	0					&\hdots\vphantom{\frac{\sqrt{a_1}}{\sqrt{a_2}}}	&\vdots					&\vdots						\\
1						&	1					&	0					&	0					&\hdots\vphantom{\frac{\sqrt{a_1}}{\sqrt{a_2}}}	&\vdots					&\vdots						\\
0						&	0					&	1					&	1					&\hdots\vphantom{\frac{\sqrt{a_1}}{\sqrt{a_2}}}	&0						&0							\\
0						&	0					&	0					&	0					&\ddots\vphantom{\frac{\sqrt{a_1}}{\sqrt{a_2}}}	&0						&0							\\
0						&	0					&	0					&	0					&\hdots\vphantom{\frac{\sqrt{a_1}}{\sqrt{a_2}}}	&1						&1	
\end{psmallmatrix},
\end{equation}
where then, for all positive odd $k$ such that $a_{k} = a_{k+1} = 0$, the pair $(- \frac{\sqrt{a_k}}{\sqrt{a_{k+1}}},1),(\frac{\sqrt{a_k}}{\sqrt{a_{k+1}}},1)$ is substituted with the choice of any pair of linearly independent vectors $\boldsymbol{w_k}, \boldsymbol{w_{k+1}}$ in $\mathbb{C}^2$.
For odd size $n$ and general antidiagonal matrix $A$ given by (\ref{E:antidiagA}), $D$ is the diagonal spectral matrix with main diagonal
\[
(a_1,-\sqrt{a_2}\sqrt{a_3},\sqrt{a_2}\sqrt{a_3},...,-\sqrt{a_{n-1}}\sqrt{a_n},\sqrt{a_{n-1}}\sqrt{a_n}),
\]
and the modal matrix is, for any choice of nonzero $\omega \in \mathbb{C}$,
\begin{equation} \label{E: oddLam}
\Lambda =    \begin{psmallmatrix}
0 				& 	0					&	0					&\hdots\vphantom{\frac{\sqrt{a_1}}{\sqrt{a_2}}}		&-\frac{\sqrt{a_n}}{\sqrt{a_{n-1}}}&\frac{\sqrt{a_n}}{\sqrt{a_{n-1}}}	\\
0				& 	0					&	0					&\iddots\vphantom{\frac{\sqrt{a_1}}{\sqrt{a_2}}}		&0						&0							\\
0				&-\frac{\sqrt{a_3}}{\sqrt{a_2}}	&\frac{\sqrt{a_3}}{\sqrt{a_2}}	&\hdots\vphantom{\frac{\sqrt{a_1}}{\sqrt{a_2}}}		&0						&0							\\
\omega			&	0					&	0					&\hdots\vphantom{\frac{\sqrt{a_1}}{\sqrt{a_2}}}		&\vdots					&\vdots						\\
0				&	1					&	1					&\hdots\vphantom{\frac{\sqrt{a_1}}{\sqrt{a_2}}}		&0						&0							\\
0				&	0					&	0					&\ddots\vphantom{\frac{\sqrt{a_1}}{\sqrt{a_2}}}		&0						&0							\\
0				&	0					&	0					&\hdots\vphantom{\frac{\sqrt{a_1}}{\sqrt{a_2}}}		&1						&1	
\end{psmallmatrix},
\end{equation}
where then, for all positive even $k$ such that $a_{k} = a_{k+1} = 0$, the pair $(- \frac{\sqrt{a_{k+1}}}{\sqrt{a_k}},1),(\frac{\sqrt{a_{k+1}}}{\sqrt{a_k}},1)$ is substituted with the choice of any pair of linearly independent vectors $\boldsymbol{w_k}, \boldsymbol{w_{k+1}}$ in $\mathbb{C}^2$.
\end{theorem}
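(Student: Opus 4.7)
The plan is to exploit Theorem \ref{T:permQuasi} to decouple the problem into a direct sum of $2 \times 2$ blocks (plus a single $1 \times 1$ block when $n$ is odd), since diagonalizability, the explicit form of eigenvectors, and the parity/c-parity of the spectrum all respect direct sums. Writing $A \permsim \mathfrak{Q} = \bigoplus_{j} B_j$ where each $B_j$ is either of the form $\begin{psmallmatrix} 0 & a_{j-1} \\ a_j & 0 \end{psmallmatrix}$ or the $1 \times 1$ block $(a_1)$, a block-by-block analysis is enough. Each $2 \times 2$ block falls into exactly one of three cases: (1) both antidiagonal entries nonzero, giving distinct eigenvalues $\pm\sqrt{a_{j-1}}\sqrt{a_j}$ and therefore diagonalizability with eigenvectors $\bigl(\mp\tfrac{\sqrt{a_{j-1}}}{\sqrt{a_j}},1\bigr)^\top$; (2) both entries zero, giving the zero matrix, whose eigenspace is the whole of $\mathbb{C}^2$ and for which any pair of linearly independent vectors $\boldsymbol{w_k},\boldsymbol{w_{k+1}}$ serves; (3) exactly one entry nonzero (defective pair), giving a nonzero nilpotent Jordan block that is manifestly not diagonalizable. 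This establishes $(a) \Leftrightarrow (c)$ at once, and simultaneously exhibits the columns of $\Lambda$ displayed in (\ref{E: evenLam}) and (\ref{E: oddLam}) as the concatenation of the eigenbases of the $B_j$'s, conjugated back by the fixed permutation $P$ of Theorem \ref{T:permQuasi}.

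Next, I would push this observation through to $(a) \Leftrightarrow (b) \Leftrightarrow (d)$. The matrix $\Lambda$ is, up to the permutation $P$, block diagonal with $2 \times 2$ blocks $\bigl(\begin{smallmatrix}-\alpha_j & \alpha_j \\ 1 & 1\end{smallmatrix}\bigr)$ (each with determinant $2\alpha_j \neq 0$ in case (1), and invertible by the choice of linearly independent $\boldsymbol{w_k},\boldsymbol{w_{k+1}}$ in case (2)) and, for odd $n$, an additional $1 \times 1$ block $(\omega)$ with $\omega \neq 0$. Thus $\Lambda$ is nonsingular precisely when every transpose pair is nondefective, giving $(c) \Leftrightarrow (d)$. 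A direct multiplication $A\Lambda = \Lambda D$, verified block by block using the corresponding columns of (\ref{E: evenLam}) or (\ref{E: oddLam}) and the spectral formula from Theorem \ref{T:specAntiD}, will then confirm $(b)$ explicitly; conversely $(b)$ trivially implies $(a)$. The parity of the spectrum in $(b)$ is immediate from Corollary \ref{C:specAntiDble}.

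For the two consequences, note that $A^2$ is a diagonal matrix by Theorem \ref{T:propsOfAntiD}(c), with diagonal entries consisting of the products $a_{j-1}a_j$ (plus $a_1^2$ in the odd case); meanwhile $D^2$ has these same products but each appearing twice consecutively (once for $\sqrt{a_{j-1}a_j}$ and once for $-\sqrt{a_{j-1}a_j}$). A direct permutation of diagonal entries identifies $A^2$ with $D^2$, settling (ii). For (i), tracelessness forces $a_1=0$ in the odd case and gives a symmetric spectrum in both parities; pairing each $+\sqrt{a_{j-1}a_j}$ with its negative across the antidiagonal of $D$ and placing any unpaired zeros at the center produces the antipersymmetric form in (\ref{E:DCentAt0}), which is permutation-similar to $D$.

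The main obstacle, and the step that deserves the most care, is verifying the explicit columns of $\Lambda$ in (\ref{E: evenLam}) and (\ref{E: oddLam}), particularly in the degenerate case when some pair $a_k=a_{k+1}=0$: the formulas $\pm\sqrt{a_k}/\sqrt{a_{k+1}}$ are then indeterminate, and one must justify the substitution by arbitrary linearly independent $\boldsymbol{w_k},\boldsymbol{w_{k+1}}$ by appealing to the fact that the corresponding $2 \times 2$ block of $\mathfrak{Q}$ is identically zero, so the eigenspace for eigenvalue $0$ is all of $\mathbb{C}^2$ restricted to that block. Once this bookkeeping is done carefully, and the index offset between $A$'s antidiagonal and $\mathfrak{Q}$'s blocks (which differs between the even and odd cases, explaining the appearance of $a_{n-1}/a_n$ for even $n$ versus $a_n/a_{n-1}$ for odd $n$) is tracked through the permutation $P$, the rest of the proof is essentially bookkeeping on top of Theorem \ref{T:permQuasi}.
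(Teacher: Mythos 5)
Your proposal is correct, and the core case analysis (both entries of a transpose pair nonzero, both zero, exactly one nonzero) is the same trichotomy the paper uses. The mechanism differs in one respect worth noting: the paper's proof of Theorem \ref{T:antiEigendec} does \emph{not} route through Theorem \ref{T:permQuasi}; it works directly with $A$ and $\Lambda$, establishing the independence of each ``sextuple'' $(a_k, a_{k+1}, \lambda_k, \lambda_{k+1}, v_k, v_{k+1})$ by inspection and then verifying the explicit identity $A = \Lambda D \Lambda^{-1}$ by induction on $n$ (appending a transpose pair and growing $\Lambda$, $D$ by two rows and columns at each step). You instead conjugate by the fixed permutation $P$ of Theorem \ref{T:permQuasi} first, so that $A$, $\Lambda$, and $D$ all become block diagonal and the verification $A\Lambda = \Lambda D$ reduces to a $2 \times 2$ computation per block; the paper's ``independence of sextuples'' then comes for free from the direct-sum structure rather than being argued by looking at which columns of $\Lambda$ depend on which $a_k$. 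Your route is arguably cleaner and makes the equivalence $(c) \Leftrightarrow (d)$ a one-line determinant computation per block, at the cost of having to track the index offset and row permutation induced by $P$ (which you correctly flag as the delicate bookkeeping step). Both arguments handle the degenerate pair $a_k = a_{k+1} = 0$ identically, by noting the corresponding block is the zero matrix so any linearly independent pair serves; and both share the same mild looseness about which element of a defective pair carries the zero (the paper resolves this by relabeling only later, in the proof of Theorem \ref{T:jorAntiD}), so your proposal is no worse off there.
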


\begin{proof} $(a) \iff (b) \iff (c)$ Notice every complex antidiagonal matrix $A$ falls under one of two mutually exclusive cases: either $\forall (A)_{i,j}, (A)_{i,j} = 0 \Rightarrow (A)_{j,i}
= 0$ (i.e. all transpose pairs along the antidiagonal are nondefective), or $\exists (A)_{i,j}, (A)_{i,j} = 0 \land (A)_{j,i} \neq 0$ (i.e. there exists a transpose pair along the antidiagonal that is defective).

Let $n$ be even, and let $A$, $\Lambda$, and $D$ be of the form given in the theorem. Let positive $k$ be odd, so the first case is equivalent to $\forall a_k = 0, a_{k+1} = 0$, and the second case is equivalent to $\exists a_k = 0$ such that $a_{k+1} \neq 0$.

We denote the eigenvalues of $D$ such that $\lambda_k = -\sqrt{a_k}\sqrt{a_{k+1}}$ and $\lambda_{k+1} = \sqrt{a_k}\sqrt{a_{k+1}}$, and we conclude $a_k = 0 \lor a_{k+1} = 0$ if and only if $\lambda_k = 0 \lor \lambda_{k+1} = 0$. This establishes a one-to-one correspondence between transpose pairs $a_k, a_{k+1}$ of elements on the antidiagonal of $A$ and pairs $\lambda_k$, $\lambda_{k+1}$ of opposite eigenvalues of $D$. Each pair of opposite eigenvalues depends only on its corresponding unique transpose pair. Looking at (\ref{E: evenLam}), it is evident the transpose pair $a_k, a_{k+1}$ is also in one-to-one correspondence with columns $k, k+1$ of $\Lambda$, which we will denote $v_k, v_{k+1}$. Each such pair of columns depends only on its corresponding unique transpose pair. In all, we can conclude each sextuple $(a_k, a_{k+1},\lambda_k, \lambda_{k+1}, v_k, v_{k+1})$ is independent of all others.

\begin{description}
   \item[Case 1: $\forall a_k = 0, a_{k+1} = 0$] for positive odd $k$
	   \begin{description}
	   \item[Case 1a:] $\forall k, a_k \neq 0$
   
	   In this subcase, none of the elements of $\Lambda$ dependent on $A$ are 0, and $D$ is nonsingular. We will use induction. Let a subscript denote the size of matrices $A$, $\Lambda$, and $D$.

It is straightforward to prove the base case $A^{}_2 = \Lambda^{}_2 D^{}_2 \Lambda_2^{-1}$. Assume $A^{}_n = \Lambda^{}_n D^{}_n \Lambda_n^{-1}$.
\[
\displayindent0pt
\displaywidth\textwidth
\begin{aligned}
\Lambda_{n+2}^{\vphantom{-1}} D_{n+2}^{\vphantom{-1}} \Lambda_{n+2}^{-1}
&= \begin{pNiceArray}{ccccc}[small]
0	& \hdots 		& 0 	&-\frac{\sqrt{a_{n+1}}}{\sqrt{a_{n+2}}}	&\frac{\sqrt{a_{n+1}}}{\sqrt{a_{n+2}}}				\\
\Block{3-3}<\LARGE>{\Lambda_n} & & & 0 & 0\vphantom{\frac{\sqrt{a_1}}{\sqrt{a_2}}}									\\
	&       		&   	& \vdots  							&   	\vdots\vphantom{\frac{\sqrt{a_1}}{\sqrt{a_2}}}	\\
	&       		&   	& 0								&	0\vphantom{\frac{\sqrt{a_1}}{\sqrt{a_2}}} 		\\
0  	&  \hdots     	& 0  	& 1 								&	1\vphantom{\frac{\sqrt{a_1}}{\sqrt{a_2}}}
    \end{pNiceArray}
    \begin{pNiceArray}{ccccc}[small]
\Block{1-1}<\LARGE>{D_n}\hphantom{A} & & & & 0   				\\
	&       		&   	&   							&   						\\
	&       		&   	& -\sqrt{a_{n+1}}\sqrt{a_{n+2}}		&	 					\\
0  	&       		&   	&  							&\sqrt{a_{n+1}}\sqrt{a_{n+2}}	
    \end{pNiceArray}
\begin{pNiceArray}{ccccc}[small]
0		& \Block{3-3}<\LARGE>{\Lambda_n^{-1}}	& & & 0					\\
\vdots	& 	& 	&  	& \vdots								\\
0		&      &   	&   	&  0\vphantom{\frac{\sqrt{a_1}}{\sqrt{a_2}}}	\\
-\frac{1}{2}\frac{\sqrt{a_{n+2}}}{\sqrt{a_{n+1}}}	& 0      						&  \hdots 	& 	0						&	\frac{1}{2}\vphantom{\frac{\sqrt{a_1}}{\sqrt{a_2}}}						\\
\frac{1}{2}\frac{\sqrt{a_{n+2}}}{\sqrt{a_{n+1}}}  	&  \hphantom{A}0\hphantom{A}     	& \hdots  	& \hphantom{A}0\hphantom{A} 		&	\hphantom{A}\frac{1}{2}\hphantom{A}\vphantom{\frac{\sqrt{a_1}}{\sqrt{a_2}}}
    \end{pNiceArray}\\
    & = \begin{pNiceArray}{ccccc}[small]
0	& \hdots 		& 0 	&a_{n+1}	&a_{n+1}	\\
\Block{3-3}<\LARGE>{\Lambda_n D_n} & & & 0 & 0\vphantom{\frac{\sqrt{a_1}}{\sqrt{a_2}}}\\
	&       		&   	& \vdots  	&   \vdots\vphantom{\frac{\sqrt{a_1}}{\sqrt{a_2}}}	\\
	&       		&   	& 0		&	0\vphantom{\frac{\sqrt{a_1}}{\sqrt{a_2}}} 	\\
\hphantom{A}0\hphantom{A}  	&  \hdots     	& \hphantom{A}0\hphantom{A}  	& -\sqrt{a_{n+1}}\sqrt{a_{n+2}} 		&\sqrt{a_{n+1}}\sqrt{a_{n+2}}
    \end{pNiceArray}
    \begin{pNiceArray}{ccccc}[small]
0		& \Block{3-3}<\LARGE>{\Lambda_n^{-1}}	& & & 0					\\
\vdots	& 	& 	&  	& \vdots								\\
0		&      &   	&   	&  0\vphantom{\frac{\sqrt{a_1}}{\sqrt{a_2}}}	\\
-\frac{1}{2}\frac{\sqrt{a_{n+2}}}{\sqrt{a_{n+1}}}	& 0      						&  \hdots 	& 	0						&	\frac{1}{2}\vphantom{\frac{\sqrt{a_1}}{\sqrt{a_2}}}						\\
\frac{1}{2}\frac{\sqrt{a_{n+2}}}{\sqrt{a_{n+1}}}  	&  \hphantom{A}0\hphantom{A}     	& \hdots  	& \hphantom{A}0\hphantom{A} 		&	\hphantom{A}\frac{1}{2}\hphantom{A}\vphantom{\frac{\sqrt{a_1}}{\sqrt{a_2}}}
    \end{pNiceArray}\\
        & = \begin{pNiceArray}{ccc}[small]
0	&  \hphantom{\sqrt{a_{n+1}}\sqrt{a_{n+2}}}\vphantom{\frac{\sqrt{a_1}}{\sqrt{a_2}}}	&a_{n+1}	\\
\vphantom{\frac{\sqrt{a_1}}{\sqrt{a_2}}}	& \Block{1-1}<\LARGE>{ \Lambda^{}_n D^{}_n \Lambda_n^{-1}} &\vphantom{\frac{\sqrt{a_1}}{\sqrt{a_2}}} \\
a_{n+2}	& \hphantom{\sqrt{a_{n+1}}\sqrt{a_{n+2}}}\vphantom{\frac{\sqrt{a_1}}{\sqrt{a_2}}}	&0
    \end{pNiceArray}\\
    & = A_{n+2}
    \end{aligned}
\]

In this subcase, since it has been proven $A$ and $D$ are similar, and $D$ is nonsingular, $A$ is nonsingular as well. \emph{This subcase is characterized by $A$ being diagonalizable and nonsingular}.\footnote{\label{lab1}As we will see in Corollary \ref{C:nonsingImpDiag}, $A$ being nonsingular implies $A$ is diagonalizable. So we can characterize this subcase simply as \emph{$A$ being nonsingular}.}
	   \item[Case 1b:] $\exists k, a_k = 0$   
	   
	   By existential instantiation, let $a_l = 0$ for some positive odd $l$. It follows by assumption that $a_{l+1} = 0$, and consequently, $\lambda_l = \lambda_{l+1} = 0$. Due to the dependence of columns $l,l+1$ in $\Lambda$ only on transpose pair $a_l,a_{l+1}$, choosing a substitution for the two nonzero components of columns $l,l+1$ of $\Lambda$ each amounts to choosing a linearly independent pair $\boldsymbol{w_l}, \boldsymbol{w_{l+1}}$ of eigenvectors for 
$\begin{psmallmatrix}
0 & a_{l+1} \\
a_l & 0 
\end{psmallmatrix}$
where $a_l = a_{l+1} = 0$, which can always be done. Notice, though the matrix is 0 and the eigenvalues are 0, the pair $\boldsymbol{w_l}, \boldsymbol{w_{l+1}}$ being linearly independent is necessary for $\Lambda$ to be nonsingular, which is necessary for the diagonalization desired. \emph{This subcase is characterized by $A$ being diagonalizable and singular}.
\end{description}

It is now evident that $v_k, v_{k+1}$ can be identified with the eigenvectors for eigenvalues $\lambda_k, \lambda_{k+1}$, and equivalently, $\Lambda$ is the modal matrix for the diagonalization of $A$.\footnote{We can think of $v_k, v_{k+1}$ as eigenvectors and could typeface them in bold to be consistent with our notational standard, but we sometimes prefer to regard them as columns.}

   \item[Case 2: $\exists a_k = 0, a_{k+1} \neq 0$] for positive odd $k$
   
The proof for this case follows a similar line of reasoning as that of Case 1b. Again, by existential instantiation, let $a_l = 0$ for some positive odd $l$ so that, by assumption, $a_{l+1} \neq 0$. It still follows that $\lambda_l = \lambda_{l+1} = 0$. Due to the dependence of columns $l,l+1$ in $\Lambda$ only on transpose pair $a_l,a_{l+1}$, the two 2-element columns consisting of the nonzero components of columns $l,l+1$ each, under some choice of or substitution to their entries, must form an eigenvector of $\begin{psmallmatrix}
0 & a_{l+1} \\
a_l & 0 
\end{psmallmatrix}$. These eigenvectors cannot be linearly independent as the nullspace of the matrix is one-dimensional. \emph{This case is characterized by $A$ being defective and singular}\footnote{equivalently, just defective; see footnote \ref{lab1}}. 
   \end{description}
   
Since the cases are mutually exclusive and collectively exhaustive, we have characterized all complex antidiagonal matrices into being diagonalizable or defective.
   
The proof when $n$ is odd is essentially the same where positive $k$ and $l$ are now even. In this decomposition, $\omega$ from $\Lambda$ and $\omega^{-1}$ from $\Lambda^{-1}$ cancel out, leaving $a_1$, making the decomposition independent of the value of $a_1$ and of $\omega$, except $\omega$ must be nonzero. So the analogous cases and subcases do not depend on center element $a_1$. The value of $a_1$ has no effect on the diagonalization of $A$ or on whether or not $A$ can be diagonalized, but recall it is an eigenvalue of $A$, so it does affect the invertibility of $A$. Thus, for odd $n$, Case 1 is characterized by \emph{$A$ being diagonalizable}, where Case 1a and Case 1b are no longer distinguished by the invertibility of $A$, and Case 2 is still characterized by \emph{$A$ being defective and singular}\footnote{equivalently, just defective; see footnote \ref{lab1}}.
   
$(c) \iff (d)$ Let $n$ be even and let positive $k$ be odd. Looking at (\ref{E: evenLam}) and the definition of $\Lambda$, it is evident that columns $k,k+1$ of $\Lambda$ are linearly independent if and only if $a_k,a_{k+1}$ are both nonzero or both zero. Moreover, for all $A$, columns $k,k+1$ are each linearly independent of all other columns. This implies the conclusion.

Again, the proof for the case where $n$ is odd is essentially the same where positive $k$ and $l$ are now even, and the proof does not depend on the value of center element $a_1$.

$i.$ Any diagonal matrix can have its main diagonal elements permuted without restriction via permutation-similarity transformations.

$ii.$ Let $n$ be even. Using Theorem \ref{T:propsOfAntiD}, $A^2$ is the diagonal matrix with main diagonal $(a_{n-1} a_n,...,a_3 a_4, a_1 a_2, a_1 a_2, a_3 a_4,...,a_{n-1} a_n)$, and $D^2$ is the diagonal matrix with main diagonal $(a_1a_2,a_1 a_2,a_3 a_4,a_3 a_4,...,a_{n-1} a_n,a_{n-1} a_n)$. These are clearly permutation-similar.

The case where $n$ is odd is again, essentially the same, taking into account the unique freedom of center element $a_1$.
\end{proof}

Let $A$ be a complex antidiagonal matrix of size $n$. We can conclude from Theorem \ref{T:antiEigendec}, if $A$ is diagonalizable, then $\Lambda$ diagonalizes it. In particular, every diagonalizable $A$ is diagonalizable to diagonal matrix $D$ with a symmetric spectrum for even $n$ and a c-symmetric spectrum for odd $n$ through modal matrix function $\Lambda = \Lambda(A)$. Moreover, if $A$ is traceless, we can choose $D$ to be antipersymmetric by composing $\Lambda$ with appropriate permutation matrices (permutation matrices are unitary, so the composition defines a unitary similarity transformation if and only if $\Lambda$ is unitary). As for defective antidiagonal matrices, Theorem \ref{T:antiEigendec} also shows the set of defective antidiagonal matrices is precisely the set of antidiagonal matrices containing a defective transpose pair on their antidiagonal.

The Jordan decomposition given in Theorem \ref{T:jorAntiD} generalizes the eigendecomposition given in Theorem \ref{T:antiEigendec}.

Due to the importance of $\Lambda$ and its cousins throughout this paper, we note a relationship between the transpose of $\Lambda$, the inverse of $\Lambda$, and the multiplicative inverse of all nonzero elements of $\Lambda$ (this defines the reciprocal matrix operation $\_^{-\mathds{1}}$ from Theorem \ref{T:propsOfAntiD}).

\begin{corollary} [Inverse of $\Lambda$] \label{C:lamInv}
Let $\Lambda = \Lambda(A)$ as given in Theorem \ref{T:antiEigendec} be nonsingular and $\_^{-\mathds{1}}$ as given in Theorem \ref{T:propsOfAntiD}. If $A$ is nonsingular, then the inverse of $\Lambda$ is given by
\begin{equation}\label{E:lambInvEq}
\Lambda^{-1} = \frac{1}{2} (\Lambda^\top)^{-\mathds{1}} = \frac{1}{2} (\Lambda^{-\mathds{1}})^\top.
\end{equation}

In this case, the diagonalization from Theorem \ref{T:antiEigendec} can be rewritten $A = \Lambda D \, \Lambda^{-1} = \frac{1}{2} \Lambda D \, (\Lambda^\top)^{-\mathds{1}}$.
\end{corollary}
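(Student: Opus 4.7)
My plan is to verify \eqref{E:lambInvEq} by exploiting the fact that, up to a row permutation, $\Lambda$ is block diagonal with very simple $2 \times 2$ blocks (together with a single $1 \times 1$ block in the odd case). Examining \eqref{E: evenLam}, columns $2k-1$ and $2k$ of $\Lambda$ have their only nonzero entries in a common pair of rows and restrict on those two rows to the $2 \times 2$ matrix $B_k = \left(\begin{smallmatrix} -r_k & r_k \\ 1 & 1 \end{smallmatrix}\right)$, where $r_k = \sqrt{a_{2k-1}}/\sqrt{a_{2k}}$. The nonsingularity hypothesis on $A$ ensures every $r_k$ is defined and nonzero, so no column pair needs to be replaced by arbitrary $\boldsymbol{w_k}, \boldsymbol{w_{k+1}}$. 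A row permutation $P$ that brings each paired row into consecutive positions converts $\Lambda$ into the block diagonal matrix $P\Lambda = B_1 \oplus \cdots \oplus B_{n/2}$ (with an additional $(\omega)$ summand when $n$ is odd).

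The computational heart of the proof is the identity on a single $2 \times 2$ block. A direct calculation gives $\det B_k = -2r_k$ and
\[
B_k^{-1} = \frac{1}{-2r_k}\begin{pmatrix} 1 & -r_k \\ -1 & -r_k \end{pmatrix} = \frac{1}{2}\begin{pmatrix} -1/r_k & 1 \\ 1/r_k & 1 \end{pmatrix} = \frac{1}{2}\bigl(B_k^\top\bigr)^{-\mathds{1}}.
\]
This structural coincidence — that for each column-pair of $\Lambda$, inverting the restricted block reduces to transposing and then taking entrywise reciprocals, up to the scalar $\tfrac{1}{2}$ — is the combinatorial core of the corollary.

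Reassembly is essentially bookkeeping. The operations $\_^\top$ and $\_^{-\mathds{1}}$ commute entrywise, since transposition merely rearranges entries while preserving the set of nonzero positions; this yields the second equality $\tfrac{1}{2}(\Lambda^\top)^{-\mathds{1}} = \tfrac{1}{2}(\Lambda^{-\mathds{1}})^\top$ for free. The permutation $P$ is orthogonal and also preserves nonzero positions, so it commutes with $\_^{-\mathds{1}}$; inverting $P\Lambda$ block by block via the identity above and then undoing $P$ recovers $\Lambda^{-1} = \tfrac{1}{2}(\Lambda^\top)^{-\mathds{1}}$ on all of $\Lambda$. The main obstacle I anticipate is not any single calculation but the indexing bookkeeping — writing $P$ down explicitly enough to be certain that the interleaved rows of $\Lambda$ really pair up as claimed, and confirming that the odd-$n$ argument threads the isolated $1 \times 1$ block corresponding to the center eigenvalue $a_1$ and the free parameter $\omega$ correctly through the $\tfrac{1}{2}$ factor.
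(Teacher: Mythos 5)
Your block-diagonalization route is genuinely different from the paper's, which disposes of the corollary with a one-line appeal to ``straightforward matrix arithmetic and mathematical induction'' (implicitly the same two-rows-at-a-time induction used for Theorem \ref{T:antiEigendec}). Your core computation is correct: $\det B_k = -2r_k$, $B_k^{-1} = \tfrac12\bigl(B_k^\top\bigr)^{-\mathds{1}}$, and since $A$ nonsingular forces every $a_k \neq 0$, no column pair degenerates to the free vectors $\boldsymbol{w_k},\boldsymbol{w_{k+1}}$. The reassembly for even $n$ is also sound: $(P\Lambda)^{-1} = \Lambda^{-1}P^\top$ while $\bigl((P\Lambda)^\top\bigr)^{-\mathds{1}} = (\Lambda^\top)^{-\mathds{1}}P^\top$ because a column permutation commutes with the entrywise reciprocal, so the blockwise identity transfers to $\Lambda$ itself. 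For even $n$ your argument is complete and more transparent than an unstated induction.

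The obstacle you defer to the end, however, is not bookkeeping --- it is fatal to the odd-$n$ case. The isolated $1\times 1$ block satisfies $(\omega)^{-1} = (\omega^{-1})$, whereas $\tfrac12\bigl((\omega)^\top\bigr)^{-\mathds{1}} = (\tfrac12\omega^{-1})$: the factor $\tfrac12$ forced by every $2\times 2$ block is wrong for the $1\times 1$ block. Concretely, for $n=3$ with $r = \sqrt{a_3}/\sqrt{a_2}$ one computes
\[
\Lambda^{-1} = \begin{pmatrix} 0 & \omega^{-1} & 0 \\ -\tfrac{1}{2r} & 0 & \tfrac12 \\ \tfrac{1}{2r} & 0 & \tfrac12 \end{pmatrix},
\qquad
\tfrac12\bigl(\Lambda^\top\bigr)^{-\mathds{1}} = \begin{pmatrix} 0 & \tfrac12\omega^{-1} & 0 \\ -\tfrac{1}{2r} & 0 & \tfrac12 \\ \tfrac{1}{2r} & 0 & \tfrac12 \end{pmatrix},
\]
which disagree in the $(1,2)$ entry. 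So (\ref{E:lambInvEq}) holds entrywise except at the single position corresponding to $\omega$, and the corollary as printed is false for odd $n$; your proof cannot be completed there because the statement fails. You should either restrict the claim to even $n$ or record the exceptional entry explicitly. The paper's proof-by-assertion does not catch this, and your block-structure analysis is precisely what exposes it.
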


\begin{proof}
This can be confirmed using straightforward matrix arithmetic and mathematical induction.
\end{proof}

Necessary and sufficient conditions for $A$ and $\Lambda$ to be nonsingular are given in Theorem \ref{T:propsOfAntiD} and Theorem \ref{T:antiEigendec}, respectively. Note $A$ being singular does not necessarily imply $\Lambda$ is singular. Thus, in cases where $A$ is singular but $\Lambda$ is nonsingular, $\Lambda^{-1}$ is defined but not necessarily given by (\ref{E:lambInvEq}).

Notice the remarkable similarity of the relationship given in Corollary \ref{C:lamInv} to that of the inverse of a nonsingular antidiagonal matrix given by (\ref{E:antiInv}). This relationship also makes computing $\Lambda^{-1}$ far less computationally expensive than general inverse-computing algorithms.

\subsection{Similarity Direct Sum Decomposition and Jordan Canonical Form} \label{Subsect:jordan} 

We must conjure a definition before proceeding.

\begin{definition} [Generalized Eigenvector] \label{D:genEigenV}
	A vector $\boldsymbol{x_r}$ is a {\bfseries generalized eigenvector of rank (or type) $r$} corresponding to matrix $M$ and eigenvalue $\lambda$ if and only if $(M - \lambda I)^r \boldsymbol{x_r} = 0$ but ${(M - \lambda I)^{r-1} \boldsymbol{x_r} \neq 0}$.\footnote{Note $\boldsymbol{x}$ is an eigenvector of matrix $M$ if and only if $\boldsymbol{x}$ is a generalized eigenvector of rank 1 of $M$.} \cite{rB70}
	\end{definition}
	
Now we can derive the Jordan decomposition and our second direct sum decomposition -- the direct sum decomposition up to similarity.

\begin{theorem} [Jordan Canonical Form and Similarity Direct Sum Decomposition of an Antidiagonal Matrix] \label{T:jorAntiD} 
Let $A$ be a complex antidiagonal matrix of size $n$ with center element $c$ if $n$ is odd. Let $\mathcal{T}$ be the set of nondefective transpose pairs in $A$, and let $\lambda_t$ be any of the eigenvalues associated with transpose pair $t$ (that is, $\lambda_t = \sqrt{\tau_1}\sqrt{\tau_2}$ or $\lambda_t = - \sqrt{\tau_1}\sqrt{\tau_2}$, for $\tau_1,\tau_2 \in t$)

\begin{enumerate} [(a)] 

\item The Jordan canonical form of $A$, up to a permutation of Jordan blocks, is

\begin{equation} \label{E: jor}
J =    \begin{pNiceArray}{cccccccccc}
0	&	1 				& 		&		&		&			&			&\Block{3-3}<\Huge>{0}	&				&			\\
	&	0				&0 		&		&		&			&			&					&				&			\\
	&					&\ddots	&\ddots	&		&			&			&					&				&			\\
	&					&		&	0	&1		&			&			&					&				&			\\
	&					&		&		&0		&0			&			&					&				&			\\	\hline
	&					&		&		&		&\lambda_{t_1}	&0			&					&				&			\\
	&					&		&		&		&			&\lambda_{t_2}	&0					&				&			\\
	&\Block{3-3}<\Huge>{0}	&		&		&		&			&			&\ddots				&\ddots			&			\\
	&					&		&		&		&			&			&					&-\lambda_{t_2}	&0			\\
	&					&		&		&		&			&			&					&				&-\lambda_{t_1}	
\end{pNiceArray},
\end{equation}

where $J$ is quasidiagonal, $t_i \in \mathcal{T}$, and for odd $n$, $\lambda_{t_{\abs{\mathcal{T}}}} = c$.

If $A$ is in the general form given by (\ref{E:antidiagA}), a generalized modal matrix giving a canonical basis is $\Lambda_G$, defined to be $\Lambda$ as in Theorem \ref{T:antiEigendec} modified such that, for all defective transpose pairs $a_o,a_e$ with nonzero element $\tau$, the ordered pair $((- \frac{\sqrt{a_o}}{\sqrt{a_e}},1),(\frac{\sqrt{a_o}}{\sqrt{a_e}},1))$ is substituted with the ordered pair $((x,0),(y,\frac{x}{\tau}))$ for any choice of $x \in \mathbb{C} \setminus \{0\}$ and $y \in \mathbb{C}$.

\item A direct sum decomposition is given by
\begin{equation}
A \simeq \mathfrak{N} \oplus \mathfrak{D}
\end{equation}
where the nilpotent part is

\[
\mathfrak{N} = \bigoplus\limits_{t \in \mathcal{T}^\complement} N_t \text{ where each } N_t = \begin{psmallmatrix} 0 & 1 \\ 0 & 0 \end{psmallmatrix},
\]

and the diagonal part is

\[
\mathfrak{D} = \begin{dcases} 
       \hspace{10 pt} \bigoplus\limits_{t \in \mathcal{T}} \lambda_t \begin{psmallmatrix} 1 & 0 \\ 0 & -1 \end{psmallmatrix}  								& \text{even n} \\
       \smashoperator[r]{\bigoplus\limits_{t \in \mathcal{T} \setminus \{c,c\}}} \lambda_t \begin{psmallmatrix} 1 & 0 \\ 0 & -1 \end{psmallmatrix} \oplus c \, (1)		& \text{odd n}
       \end{dcases}
\]

where $(1)$ is the identity matrix of size 1.
\end{enumerate}
\end{theorem}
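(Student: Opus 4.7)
The plan is to reduce the problem to individual $2 \times 2$ blocks via Theorem \ref{T:permQuasi}, and then compute the Jordan canonical form of each block separately. By Theorem \ref{T:permQuasi}, $A$ is permutation-similar (hence similar) to the quasidiagonal matrix $Q$, which is a direct sum of blocks $B_t = \begin{psmallmatrix} 0 & \tau_1 \\ \tau_2 & 0 \end{psmallmatrix}$ indexed by the transpose pairs $t = \{\tau_1, \tau_2\}$, together with the $1 \times 1$ block $(c)$ when $n$ is odd. Since the Jordan canonical form and a generalized modal matrix of a block diagonal matrix are obtained blockwise, it suffices to compute each summand's contribution and reassemble the pieces through the similarity of Theorem \ref{T:permQuasi}.

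For a nondefective pair $t \in \mathcal{T}$ with both entries nonzero, $B_t$ has distinct eigenvalues $\pm \lambda_t = \pm \sqrt{\tau_1}\sqrt{\tau_2}$; Theorem \ref{T:antiEigendec} applied to $B_t$ yields an explicit diagonalization $B_t = \Lambda_t \, \lambda_t \begin{psmallmatrix} -1 & 0 \\ 0 & 1 \end{psmallmatrix} \Lambda_t^{-1}$, so the Jordan contribution is a diagonal block with entries $\pm \lambda_t$ (which, after a harmless reordering of Jordan blocks, matches the form $\lambda_t \begin{psmallmatrix} 1 & 0 \\ 0 & -1 \end{psmallmatrix}$ appearing in (b)). For a nondefective pair with both entries zero, $B_t = 0$ is already diagonal with $\lambda_t = 0$, and any two linearly independent vectors give a modal pair, exactly as accommodated in Theorem \ref{T:antiEigendec}. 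When $n$ is odd, the $1 \times 1$ block $(c)$ is already in Jordan form and contributes the scalar summand $\lambda_{t_{\abs{\mathcal{T}}}} = c$.

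For a defective pair $t \notin \mathcal{T}$, exactly one of $\tau_1, \tau_2$ is nonzero; call it $\tau$. Then $B_t$ is a nonzero nilpotent matrix with $B_t^2 = 0$, so the eigenvalue $0$ has algebraic multiplicity $2$ and geometric multiplicity $1$, forcing a single nilpotent Jordan block $\begin{psmallmatrix} 0 & 1 \\ 0 & 0 \end{psmallmatrix}$. A canonical basis is built by choosing a nonzero eigenvector $v_1 = (x,0)^\top$ (supported on the nonzero column of $B_t$) and solving $B_t v_2 = v_1$ for a rank-$2$ generalized eigenvector; writing $v_2 = (y,z)^\top$ forces $z = x/\tau$, with $y$ free. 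This reproduces the substitution rule $((-\tfrac{\sqrt{a_o}}{\sqrt{a_e}},1),(\tfrac{\sqrt{a_o}}{\sqrt{a_e}},1)) \mapsto ((x,0),(y,\tfrac{x}{\tau}))$ stated in the theorem.

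Assembling the blockwise data and pulling back through the permutation of Theorem \ref{T:permQuasi} then yields the asserted Jordan form (\ref{E: jor}) and generalized modal matrix $\Lambda_G$, up to a reordering of Jordan blocks. Part (b) is the abstract algebraic restatement: $\mathfrak{N}$ collects the nilpotent blocks arising from defective pairs, $\mathfrak{D}$ collects the diagonal blocks (with the extra summand $c \cdot (1)$ for odd $n$), and together they realize the standard splitting of the Jordan canonical form into nilpotent and semisimple parts. The main technical obstacle is bookkeeping: one must verify that the explicit blockwise generalized eigenvector data, once composed with the permutation of Theorem \ref{T:permQuasi}, reproduces the precise $\Lambda_G$ obtained by modifying $\Lambda$ of Theorem \ref{T:antiEigendec} at exactly the defective positions — the remaining verifications reduce to routine matrix arithmetic on $2 \times 2$ blocks.
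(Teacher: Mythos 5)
Your proof is correct and follows essentially the same route as the paper's: both reduce the problem to the independent $2\times 2$ transpose-pair blocks, diagonalize the nondefective ones via Theorem \ref{T:antiEigendec}, and show each defective block $\begin{psmallmatrix}0&\tau\\0&0\end{psmallmatrix}$ is a single nilpotent Jordan block with canonical basis $((x,0),(y,x/\tau))$. The only cosmetic difference is that you justify the blockwise reduction by the explicit permutation-similarity of Theorem \ref{T:permQuasi}, whereas the paper appeals to the independence of the column pairs of $\Lambda$ established in the proof of Theorem \ref{T:antiEigendec}; the substance is identical.
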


\begin{proof}
$(a)$ Let $A$ of size $n$ be given by (\ref{E:antidiagA}), and let $\Lambda$ and $D$ be defined as in Theorem \ref{T:antiEigendec}.

First, notice $\Lambda$ is definable for all complex values of all its variables $a_k$ ($k = 1,...,n$). This is proven in Theorem \ref{T:antiEigendec} for the case where no transpose pair is defective. For the complementary case, $\Lambda$ is only undefined whenever the odd-indexed element of some transpose pair of $A$ is nonzero and the even-indexed element is 0. However, notice for any such transpose pair, the labels $a_k$ and $a_{k+1}$ of its elements can be flipped without changing the values they represent, whereby now the odd-indexed element is 0 and the even-indexed element is nonzero. The diagonalization remains invariant across this relabeling, and if this is done for every such transpose pair, $\Lambda$ is defined.

Therefore, without loss of generality, we may consider the odd-indexed element to be 0 and the even-indexed element to be nonzero in every defective transpose pair in $A$. 

The key to the proof is the one-to-one correspondence established in the proof of Theorem \ref{T:antiEigendec}. Because each sextuple $(a_k, a_{k+1},\lambda_k, \lambda_{k+1}, v_k, v_{k+1})$ is independent of all others, we know $J$ can be split into two matrix blocks (not necessarily Jordan blocks). One matrix block is a diagonal matrix block (the lower-right matrix block of $J$) with a symmetric spectrum for even $n$ and a c-symmetric spectrum for odd $n$, consisting of the nondefective eigenvalues of $A$, so its diagonal elements are indexed by $\mathcal{T}$. Its structure is determined entirely by $\Lambda$, $D$, and Case 1 in the proof of Theorem \ref{T:jorAntiD}. The remaining matrix block (the upper-left matrix block of $J$) consists of all the defective eigenvalues of $A$ along its diagonal, and we will now prove its structure is determined by the modification to $\Lambda$ defining $\Lambda_G$ and a modification to Case 2 in the proof of Theorem \ref{T:jorAntiD}.

If $A$ has no defective transpose pairs, then the theorem reverts to Theorem \ref{T:jorAntiD} and is proven. Therefore, beginning as in Case 2 in the proof of Theorem \ref{T:jorAntiD}, let $a_l,a_{l+1}$ be any defective transpose pair where $a_l = 0$ for some positive odd $l$ so $a_{l+1} \neq 0$. We see $\lambda_l = \lambda_{l+1} = 0$, as the only possible defective eigenvalue is 0. Columns $l,l+1$ in $\Lambda$ depend only on transpose pair $a_l,a_{l+1}$, and the two 2-element columns consisting of the nonzero components of columns $l,l+1$, under any choice of or substitution to their entries, cannot form two linearly independent eigenvectors of $\begin{psmallmatrix}
0 & a_{l+1} \\
a_l & 0 
\end{psmallmatrix}$, as the nullspace of this matrix is 0. However, they can form two linearly independent \emph{generalized} eigenvectors of rank 2.

The most general form of these linearly independent generalized eigenvectors is 
$\begin{psmallmatrix} x \vphantom{y} \\ 0 \vphantom{x/a}  \end{psmallmatrix}, \begin{psmallmatrix} y \\ x/a_{l+1} \end{psmallmatrix}$
for any choice of $x \in \mathbb{C} \setminus \{0\}$ and $y \in \mathbb{C}$. This is because, using Definition \ref{D:genEigenV} with $\lambda = 0$, 
\begin{equation} \label{E:genEigenvectEq}
\begin{pmatrix}
0 & a_{l+1} \\
0 & 0 
\end{pmatrix}^r 
\begin{pmatrix} x  \\ 0  \end{pmatrix} = 0
\text{\hphantom{w} and \hphantom{w}}
\begin{pmatrix}
0 & a_{l+1} \\
0 & 0 
\end{pmatrix}^r 
\begin{pmatrix} y \\ \frac{x}{a_{l+1}} \end{pmatrix} = 0,
\end{equation}
for $r=2$ but not for $r=1$.
Additionally, since
\begin{equation} \label{E:defMat}
\begin{pmatrix}
0 & a_{l+1} \\
0 & 0 
\end{pmatrix}
=
\begin{pmatrix}
x & y  \\
0 & \frac{x}{a_{l+1}} 
\end{pmatrix}
\begin{pmatrix}
0 & 1 \\
0 & 0 
\end{pmatrix}
\begin{pmatrix}
x & y  \\
0 & \frac{x}{a_{l+1}} 
\end{pmatrix}^{-1},
\end{equation}
these linearly independent generalized eigenvectors, in turn, bequeath a Jordan block $\begin{psmallmatrix}
0 & 1 \\
0 & 0 
\end{psmallmatrix}$ to $J$, and we can also see no further freedom can be added to them. This also implies the resulting modification to $\Lambda$ is $((- \frac{\sqrt{a_l}}{\sqrt{a_{l+1}}},1),(\frac{\sqrt{a_l}}{\sqrt{a_{l+1}}},1)) \mapsto ((x,0),(y,\frac{x}{a_{l+1}}))$, defining $\Lambda_G$.

Since all this is true for any defective transpose pair $a_l,a_{l+1}$, by universal generalization, it is true for all such pairs -- every defective transpose pair $a_k,a_{k+1}$ contributes exactly one Jordan block $\begin{psmallmatrix}
0 & 1 \\
0 & 0 
\end{psmallmatrix}$ to $J$. Finally, $\Lambda_G$ is a modal matrix function converting $A$ to $J$ (up to a permutation of Jordan blocks) and vice versa.

$(b)$ This is essentially an abstract algebraic restatement of part $(a)$. The top left matrix block of $J$ uniquely determines and is uniquely determined by $\mathfrak{N}$ up to a permutation of the Jordan blocks, where permutations of the Jordan blocks are in one-to-one correspondence with permutations of $\mathcal{T}$ treated as an ordered multiset. The bottom right matrix block of $J$ uniquely determines and is uniquely determined by $\mathfrak{D}$ up to a permutation in the same way.

\end{proof}

Not only is this Jordan decomposition an upper bidiagonalization (and thus, an upper triangularization), as are all Jordan decompositions, but it is another quasidiagonalization as well.

The Jordan decomposition given in Theorem \ref{T:jorAntiD} agrees with the eigendecomposition from Theorem \ref{T:antiEigendec} when no transpose pair in $A$ is defective.

Notice the similarity between Theorem \ref{T:jorAntiD}$(b)$ and the classification theorem for finitely-generated modules over principle ideal domains where the nilpotent part $\mathfrak{N}$ plays the roll of the torsion part and the diagonal part $\mathfrak{D}$ plays the roll of the free part. In fact, we can see $\mathfrak{N} \oplus \mathfrak{D}$ is a direct sum decomposition of a finitely generated $\mathbb{Z}$-module $\mathfrak{M}$ (that is, a finitely generated abelian group) under the map $(k,M) \mapsto M^k$. Letting $S$ denote a direct summand, $\mathfrak{N}$ is the torsion submodule (the ``torsion part'') since $\exists k \in \mathbb{Z}$ such that $S^k = 0$ (in particular, index $k =2$ for all such $S$), and $\mathfrak{D}$ is a free module (the ``free part'') of finite rank $\abs{\mathcal{T}}$ since $\forall k \in \mathbb{Z}, S^k \neq 0$. In the language of the classification theorem for abelian groups, $\mathfrak{N}$ consists of the primary cyclic groups and $\mathfrak{D}$ consists of the infinite cyclic groups.

We can also mine pedagogical value from Theorem \ref{T:jorAntiD}. Antidiagonal matrices (or hollow quasidiagonal matrices, via Theorem \ref{T:permQuasi}) offer simple and easy-to-generate examples of different matrices of any size that represent that same linear transformation up to unitary similarity (that is, are unitarily similar), as well as different matrices of any size that represent the same linear transformation (that is, are similar), and we have used them for this purposes in courses we have taught. Starting with any antidiagonal matrix of one's choice, a transposition of elements within any of the transpose pairs composed with any permutation between the transpose pairs (excluding the transpose pair containing the center element for antidiagonal matrices of odd size) yields another matrix that represents the same linear transformation up to unitary similarity.\footnote{See the discussion following Theorem \ref{T:permQuasi}.} Furthermore, transforming any transpose pair $(a_k, a_{k+1})$ from the original matrix to transpose pair $(b_k, b_{k+1})$ such that $\sqrt{a_k} \sqrt{a_{k+1}} = \sqrt{b_k} \sqrt{b_{k+1}}$ yields another matrix that represents the same linear transformation.\footnote{Arguably, antidiagonal matrices make for better pedagogical examples than diagonal matrices do because simple permutations of diagonal elements for diagonal matrices of any size yields matrices that are unitarily similar, and that's it. Transformations of antidiagonal matrices are richer while also remaining simple. We also, at times, find diagonal matrices misleadingly simplistic.}

For example, given antidiagonal matrix $M_1$ below, we immediately know $M_2$ is unitarily similar to $M_1$ because the transpose pairs $(2,3)$ and $(1,4)$ are permuted, and the transpose pair $(1,4)$ is transposed to $(4,1)$. Additionally, we know $M_3$ is similar to $M_1$, and therefore represents the same linear transformation, because $\sqrt{1} \sqrt{4} = \sqrt{2} \sqrt{2}$.

\[
M_1 = \left(\begin{array}{cccc}0 &  &  & 1 \\ &  & 2 &  \\ & 3 &  &  \\4 &  &  & 0\end{array}\right)
\text{\hphantom{www}}
M_2 = \left(\begin{array}{cccc}0 &  &  & 2 \\ &  & 4 &  \\ & 1 &  &  \\3 &  &  & 0\end{array}\right)
\text{\hphantom{www}}
M_3 = \left(\begin{array}{cccc}0 &  &  & 2 \\ &  & 2 &  \\ & 3 &  &  \\2 &  &  & 0\end{array}\right)
\]

Moreover, converting $M_2$ or $M_3$ to a hollow quasidiagonal form using Theorem \ref{T:permQuasi} gives examples where even the property of being antidiagonal is not preserved.

Since Theorem \ref{T:jorAntiD} pertains to antidiagonal matrices at the similarity-class level and linear operators (for us, linear isomorphisms) are defined up to their similarity-class, we can generalize the conclusions in Theorem \ref{T:jorAntiD} from antidiagonal matrices to antidiagonalizable linear operators, where antidiagonalizable linear operators over an infinite-dimensional vector space are defined by the inductive limit of the finite case.

\begin{corollary} [Jordan Canonical Form and Direct Sum Similarity Decomposition of an Antidiagonalizable Linear Operator] \label{C:jorAntiDble}
Let $M$ be a complex linear operator of size $n$. The following are equivalent.

\begin{enumerate} [(a)]

\item $M$ is antidiagonalizable.

\item $M$ can be expressed as a direct sum of traceless $2 \times 2$ matrices, with the exception of a single $1 \times 1$ matrix as an additional summand for odd $n$.

\item $M$ has a symmetric spectrum for even $n$ and a c-symmetric spectrum for odd $n$, whereby the only generalized eigenvectors of rank $\neq 1$ are of rank 2 with eigenvalues of 0.

\item $M$ can be expressed as a direct sum of a nilpotent matrix where every generalized eigenvector is of rank 2 and a diagonalizable matrix with a symmetric spectrum if $n$ is even and a c-symmetric spectrum if $n$ is odd.

\item The Jordan canonical form of $M$, up to a permutation of Jordan blocks, is given by Theorem \ref{T:jorAntiD}$(a)$.

\item $M$ has a decomposition given by Theorem \ref{T:jorAntiD}$(b)$.
\end{enumerate}\end{corollary}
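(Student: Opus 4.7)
The plan is to anchor everything on Theorem \ref{T:jorAntiD} and use the fact that antidiagonalizability, Jordan canonical form, spectrum, and the various direct-sum descriptions are all similarity-class invariants. I will establish (a) $\Leftrightarrow$ (e) $\Leftrightarrow$ (f) first, since these follow almost immediately from Theorem \ref{T:jorAntiD} once one observes that a linear operator is determined up to similarity by its Jordan form. Specifically, if $M$ is antidiagonalizable, then $M$ is similar to some antidiagonal $A$, so $M$ and $A$ share the Jordan form given in Theorem \ref{T:jorAntiD}$(a)$; and conversely, given the Jordan form in (e), one can realize it as the Jordan form of a concrete antidiagonal matrix by choosing transpose pairs appropriately (e.g.\ a nondefective transpose pair $(a_k,a_{k+1})$ with $\sqrt{a_k}\sqrt{a_{k+1}} = \lambda_t$ for each diagonal summand, and a defective pair $(0,1)$ for each nilpotent summand). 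Then (f) is merely Theorem \ref{T:jorAntiD}$(b)$ restated at the operator level.

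Next, I would derive (e) $\Leftrightarrow$ (d) by reading off the two natural blocks of $J$ in equation (\ref{E: jor}): the upper-left block is a direct sum of $\left(\begin{smallmatrix}0 & 1\\0 & 0\end{smallmatrix}\right)$ summands, all of whose generalized eigenvectors have rank $2$; the lower-right block is diagonal, with the pairing $\pm\lambda_t$ giving a symmetric spectrum (the odd-$n$ center element is absorbed into the diagonal block and shifts this to a c-symmetric spectrum). For (e) $\Leftrightarrow$ (c), I would note that the Jordan form in Theorem \ref{T:jorAntiD}$(a)$ consists only of $1\times 1$ blocks (contributing rank-$1$ generalized eigenvectors) and $2\times 2$ blocks with eigenvalue $0$ (contributing rank-$2$ generalized eigenvectors with eigenvalue $0$), which is exactly the condition in (c); and the overall spectrum is symmetric/c-symmetric by the same pairing as above. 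Conversely, any operator satisfying (c) must have only $1\times 1$ Jordan blocks (from the rank-$1$ condition) and $2\times 2$ nilpotent Jordan blocks (from the rank-$2$-with-eigenvalue-$0$ condition), and then the spectrum condition forces the pairing and hence the form in (e).

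For (e) $\Rightarrow$ (b), I would group the Jordan blocks: each nilpotent $2\times 2$ Jordan block is traceless; each pair of diagonal $1\times 1$ blocks with opposite eigenvalues $\pm\lambda_t$ is a traceless $2\times 2$ diagonal matrix; and for odd $n$ exactly one $1\times 1$ block (the center) remains unpaired. The converse (b) $\Rightarrow$ (a) requires the most care: I need to show each traceless $2\times 2$ summand is antidiagonalizable, which follows from a short case split (a traceless $2\times 2$ matrix has eigenvalues $\pm\lambda$, hence is similar either to $\left(\begin{smallmatrix}0&\lambda\\\lambda&0\end{smallmatrix}\right)$ when $\lambda\neq 0$, or to $\left(\begin{smallmatrix}0&1\\0&0\end{smallmatrix}\right)$ or the zero matrix when $\lambda=0$, all of which are antidiagonal). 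Then a direct sum of $2\times 2$ antidiagonal blocks with a single $1\times 1$ block for odd $n$ is precisely a Q-pseudo-hollow quasidiagonal matrix, and Corollary \ref{C:quasiAnti} promotes similarity to such a matrix into antidiagonalizability of $M$.

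The main obstacle is the step (b) $\Rightarrow$ (a): unlike the other directions, which are essentially reorganizations of the Jordan form, this one requires both the elementary observation that traceless $2\times 2$ matrices are antidiagonalizable and the nontrivial bridge from a block-antidiagonal direct sum back to a genuine antidiagonal matrix, which is supplied by permutation-similarity via Theorem \ref{T:permQuasi} and Corollary \ref{C:quasiAnti}. Once this bridge is in place, the whole web of equivalences closes up.
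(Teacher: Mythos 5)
Your proposal is correct and follows essentially the same route as the paper: anchor all six statements on Theorem \ref{T:jorAntiD}, use similarity-invariance of the Jordan form for (a)$\Leftrightarrow$(e)$\Leftrightarrow$(f), read (c) and (d) off as restatements of the two blocks of $J$, and handle (b) via the observation that every traceless $2\times 2$ matrix is similar to one of the canonical summands. You are in fact slightly more careful than the paper in two places it leaves implicit -- realizing a given Jordan form of the stated shape as that of a concrete antidiagonal matrix for (e)$\Rightarrow$(a), and routing (b)$\Rightarrow$(a) explicitly through Corollary \ref{C:quasiAnti} rather than just matching summands against (f) -- but these are refinements of the same argument, not a different one.
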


\begin{proof}
It is sufficient to point out the Jordan canonical form from Theorem \ref{T:jorAntiD}$(a)$ and the direct sum decomposition from Theorem \ref{T:jorAntiD}$(b)$ remain invariant across similarity transformations, proving the equivalence of parts $(a)$, $(e)$, and $(f)$. It should be clear that parts $(c)$ and $(d)$ are essentially restatements of parts $(e)$ and $(f)$ but without the notation from Theorem \ref{T:jorAntiD}.

Finally, parts $(b)$ and $(f)$ are equivalent, as every direct summand from part $(f)$ is a traceless $2 \times 2$ matrix with the possible exception of a single $1 \times 1$ matrix as an additional summand, and every traceless $2 \times 2$ matrix is similar to $\begin{psmallmatrix} 0 & 1 \\ 0 & 0 \end{psmallmatrix}$ or $\begin{psmallmatrix} -\lambda & 0 \\ 0 & \lambda \end{psmallmatrix}$ for any $\lambda \in \mathbb{C}$, which are direct summands from part $(f)$.
\end{proof}

Corollary \ref{C:jorAntiDble}$(b)$ shows traceless $2 \times 2$ linear transformations are the building blocks for all antidiagonalizable linear transformations.\footnote{If equipped with the standard commutator for rings as a Lie bracket, traceless $2 \times 2$ complex matrices form the special linear Lie algebra $\mathfrak{sl}(2, \mathbb{C})$. Thus, up to similarity, $\mathfrak{sl}(1, \mathbb{C})$ and $\mathfrak{sl}(2, \mathbb{C})$ are the building blocks of antidiagonalizable operators. Some sort of Lie algebra structure for antidiagonalizable operators in general may exist, but it is not straightforward since a sum of antidiagonalizable operators is not necessarily antidiagonalizable. We leave this for future research. The matrix $\begin{psmallmatrix} 1 & 0 \\ 0 & -1 \end{psmallmatrix}$ in the direct summand of the diagonal part $\mathfrak{D}$ in Theorem \ref{T:jorAntiD} is the Pauli matrix $\sigma_z$ which, along with the other Pauli matrices, generate $\mathfrak{sl}(2, \mathbb{R})$ and play a foundation role in the quantum mechanics of quantum spin.} If a linear transformation can be expressed as a direct sum of traceless $2 \times 2$ matrices, along with a possible $1 \times 1$ matrix, then it is antidiagonalizable. Conversely, every antidiagonalizable linear transformation can be expressed as a direct sum of traceless $2 \times 2$ matrices, along with a possible $1 \times 1$ matrix. With this, antidiagonalizable linear transformations over infinite-dimensional vector spaces can be defined.

Recall a matrix is hollowizable if and only if it is traceless. There is an enlightening analogy that specifies the way in which traceless antidiagonalizable matrices are special cases of traceless matrices other than what can be concluded from a mere structural comparison between traceless antidiagonal matrices and hollow matrices. In a way analogous to the fact that \emph{a matrix is hollowizable if and only if it is traceless}, a traceless matrix is antidiagonalizable if and only if it is similar to a quasidiagonal matrix where each \emph{diagonal block} is traceless. In other words, up to similarity, in the same way we can identify hollowizable matrices with traceless matrices, we can identify traceless antidiagonalizable matrices with the subset of hollowizable matrices that are quasidiagonalizable into traceless blocks. The relationship is more evident for matrices that are already quasidiagonal; if $M$ is a quasidiagonal matrix, then $M$ is hollowizable if and only if $M$ is traceless, and $M$ is antidiagonalizable if and only if \emph{each diagonal block} of $M$ is traceless -- in the former case, each diagonal block is hollowizable ``all at once'', whereas in the latter case, each diagonal block is hollowizable ``separately''. With this, we can also conclude a traceless operator $M$ is antidiagonalizable if and only if $M$ can be represented as a direct sum of hollowizable matrices of size $\leq 2$.

An immediate conclusion that can be drawn from Corollary \ref{C:jorAntiDble} is that all traceless $2 \times 2$ matrices are antidiagonalizable. Moreover, we can see all nilpotent antidiagonalizable matrices have an index of nilpotency of at most 2.

\subsection{Conclusions for the Square of an Antidiagonalizable Matrix} \label{Subsect:square}

Theorem \ref{T:antiEigendec} part $(ii)$ shows the square of an antidiagonal matrix is diagonal and has some nice properties. We have similar conclusions for the more general antidiagonalizable matrices.

\begin{corollary} [Diagonalizability of the Square of an Antidiagonalizable Matrix] \label{C:antidiagSquared}
If complex matrix $M$ is antidiagonalizable, then $M^2$ is diagonalizable.
\end{corollary}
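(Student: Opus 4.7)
The plan is to reduce the claim to a calculation already done in Theorem \ref{T:propsOfAntiD}: the square of any antidiagonal matrix is a diagonal matrix (this is the $k = 2$ case of the power formula, where the off-antidiagonal entries vanish and the diagonal entries are the products $a_j a_{n+1-j}$). Starting from $M = V A V^{-1}$ for some antidiagonal $A$ (which exists by the hypothesis that $M$ is antidiagonalizable), I would square both sides to obtain $M^2 = V A^2 V^{-1}$. Since $A^2$ is diagonal, $M^2$ is similar to a diagonal matrix, and hence diagonalizable by Definition \ref{D:diag}.

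A slightly more structural alternative uses the Jordan decomposition from Theorem \ref{T:jorAntiD} (or equivalently the direct sum decomposition of Corollary \ref{C:jorAntiDble}(b)): every $2 \times 2$ direct summand of $M$ is either the nilpotent block $\begin{psmallmatrix} 0 & 1 \\ 0 & 0 \end{psmallmatrix}$, whose square is $0$, or a block of the form $\lambda_t \begin{psmallmatrix} 1 & 0 \\ 0 & -1 \end{psmallmatrix}$, whose square is the scalar matrix $\lambda_t^2 I_2$; any additional $1 \times 1$ summand $c$ contributes $c^2$. Taking the direct sum of these squares produces a diagonal matrix, so $M^2$ is similar to a diagonal matrix. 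This route also exposes the spectrum of $M^2$ (each nonzero eigenvalue appearing with even algebraic multiplicity), which is useful for the later results in this subsection characterizing when $M^2$ is normal, Hermitian, or semidefinite.

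There is no genuine obstacle here — the heavy lifting was done in Theorem \ref{T:propsOfAntiD} and Theorem \ref{T:jorAntiD}, and the corollary is essentially a one-line consequence of either. The only real choice is which of the two routes to present; the first is the shortest proof, while the second gives stronger information for subsequent corollaries.
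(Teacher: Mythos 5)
Your proposal is correct, and your second route is essentially the paper's own proof: the paper passes to the Jordan canonical form $J$ of Theorem \ref{T:jorAntiD}$(a)$, notes $M^2$ is similar to $J^2$, and observes that $J^2$ is diagonal because the nilpotent blocks square to zero and the $\pm\lambda_t$ entries square to $\lambda_t^2$. Your first route, however, is genuinely different and more elementary: it never invokes the Jordan form, only the antidiagonalization $M = VAV^{-1}$ itself together with the fact (Theorem \ref{T:propsOfAntiD}, part $(a)$ or the $k=2$ case of part $(c)$) that the square of \emph{any} antidiagonal matrix is diagonal, so $M^2 = VA^2V^{-1}$ is already an explicit diagonalization. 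This is shorter and gives the diagonalizing matrix for free (it is the same $V$ that antidiagonalizes $M$), whereas the paper's Jordan-form route is the one that sets up the subsequent Corollary \ref{C:JSquared}, since there the hypothesis is unitary similarity to $J$ specifically and the displayed form of $J^2$ is reused. Your observation that each nonzero eigenvalue of $M^2$ has even algebraic multiplicity is a correct bonus of the second route. The only nitpick is notational: with the paper's labeling convention (\ref{E:antidiagA}) the diagonal entries of $A^2$ are products of transpose pairs such as $a_k a_{k+1}$ rather than literally $a_j a_{n+1-j}$, but this does not affect the argument.
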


\begin{proof}
Since $M$ is antidiagonalizable, it is similar to its Jordan canonical form $J$ given in Theorem \ref{T:jorAntiD}$(a)$. It follows that $M^2$ is similar to $J^2$. However,
\begin{equation} \label{E: jorSq}
J^2 =    \begin{pNiceArray}{cccccccc}
0	& 				&		&				&				&\Block{3-3}<\Huge>{0}					&				&				\\
	&\ddots			&		&				&				&					&				&				\\
	&				&0		&				&				&					&				&				\\	
	&				&		&\lambda_{t_1}^2	&				&					&				&				\\
	&				&		&				&\lambda_{t_2}^2	&					&				&				\\
	&\Block{3-2}<\Huge>{0}				&		&				&				&\ddots				&				&				\\
	&				&		&				&				&					&\lambda_{t_2}^2	&				\\
	&				&		&				&				&					&				&\lambda_{t_1}^2	
\end{pNiceArray},
\end{equation}
which is clearly a diagonal matrix.
\end{proof}

The existence of nilpotent antidiagonalizable matrices demonstrates the converse to Corollary \ref{C:antidiagSquared} is not necessarily true. Nilpotent antidiagonalizable matrices are explored in Section \ref{Subsect:duodiag}.

It is clear (\ref{E: jorSq}) has some more nice properties to mine, but they remain invariant in general only when unitary similarity is assumed.

\begin{corollary} [Properties of $J^2$] \label{C:JSquared}
Let $M$ be a complex antidiagonalizable matrix that is unitarily similar to its Jordan canonical form.\footnote{Notice this is neither necessary nor sufficient for $M$ to be \emph{unitarily} antidiagonalizable.}
\begin{enumerate}[(a)]
\item $M^2$ is normal.
\item If each eigenvalue of $M$ is real or pure imaginary, then $M^2$ is Hermitian.
\item If $M$ has a real spectrum, then $M^2$ is a positive semidefinite Hermitian matrix.
\item If $M$ has a pure imaginary spectrum, then $M^2$ is a negative semidefinite Hermitian matrix.
\end{enumerate}
\end{corollary}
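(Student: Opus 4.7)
The plan is to exploit the explicit diagonal form of $J^2$ given in (\ref{E: jorSq}) together with the unitary similarity assumption, since unitary similarity preserves all four properties in question (normality, Hermiticity, positive semidefiniteness, negative semidefiniteness). Writing $M = U J U^{*}$ with $U$ unitary, we immediately get $M^2 = U J^2 U^{*}$, so the whole corollary reduces to verifying the relevant property directly on the diagonal matrix $J^2$, whose nonzero entries are precisely the numbers $\lambda_{t}^{2}$ for $t \in \mathcal{T}$ (plus zeros coming from the nilpotent blocks squaring to zero).

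For part $(a)$, the diagonal matrix $J^2$ is trivially normal (every diagonal matrix commutes with its conjugate transpose), so $M^2 = U J^2 U^{*}$ is normal. For part $(b)$, if every eigenvalue $\lambda_t$ of $M$ is real or pure imaginary, then $\lambda_t^2 \in \mathbb{R}$ in either case, so $J^2$ is a real diagonal matrix, hence Hermitian, and conjugation by $U$ preserves Hermiticity. For part $(c)$, a real spectrum forces $\lambda_t^2 \geq 0$, so $J^2$ is a nonnegative real diagonal matrix, which is positive semidefinite; unitary conjugation transfers this to $M^2$. For part $(d)$, a pure imaginary spectrum gives $\lambda_t = i r_t$ with $r_t \in \mathbb{R}$, so $\lambda_t^2 = -r_t^2 \leq 0$, making $J^2$ a nonpositive real diagonal matrix, i.e.\ negative semidefinite, a property again preserved by $U$.

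There is no real obstacle here: the only conceptual point to pin down is that the unitary-similarity hypothesis is exactly what lets us transport the diagonal structure of $J^2$ across to $M^2$. It is worth flagging explicitly, as the footnote to the corollary statement already hints, that unitary similarity to the Jordan form is independent of $M$ being unitarily antidiagonalizable, so one should take care not to conflate the two hypotheses elsewhere in the paper. Also, the zeros on the diagonal of $J^2$ coming from the nilpotent summands $\begin{psmallmatrix} 0 & 1 \\ 0 & 0\end{psmallmatrix}^{2} = 0$ do not interfere with any of (a)--(d), since $0$ is simultaneously real, nonnegative, and nonpositive.
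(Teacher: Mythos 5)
Your proposal is correct and follows essentially the same route as the paper's proof: pass to $M^2 = U J^2 U^*$, observe that $J^2$ from (\ref{E: jorSq}) is diagonal with entries $\lambda_t^2$ (and zeros from the squared nilpotent blocks), verify each property directly on that diagonal matrix, and invoke invariance of normality, Hermiticity, and semidefiniteness under unitary similarity. Your write-up is simply a more explicit version of the paper's terse argument; no gaps.
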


\begin{proof}
By assumption, $M$ is antidiagonalizable, and it is unitarily similar to its Jordan canonical form $J$ given in Theorem \ref{T:jorAntiD}$(a)$. It follows that $M^2$ is unitarily similar to $J^2$, which is given in (\ref{E: jorSq}). Each result follows from the preceding observations combined with the fact that unitary similarity preserves being normal and being Hermitian.
\end{proof}

\subsection{Duodiagonalizable Matrices and the Implications of Diagonalizability and Nilpotency} \label{Subsect:duodiag}

Corollary \ref{C:jorAntiDble}$(f)$ shows any antidiagonalizable linear operator is similar to a direct sum of copies of $\begin{psmallmatrix} 0 & 1 \\ 0 & 0 \end{psmallmatrix}$ and matrices of the form $\lambda_t \begin{psmallmatrix} 1 & 0 \\ 0 & -1 \end{psmallmatrix}$ (and $c \, I_1$, if n is odd) for eigenvalues $\lambda_t$. We can also see from this there are matrices that are antidiagonalizable but not diagonalizable.

\begin{corollary} [Nonsingularity Implies Diagonalizability for Antidiagonalizable Matrices] \label{C:nonsingImpDiag}
The only possible defective eigenvalue of an antidiagonalizable matrix is 0. Equivalently, if an antidiagonalizable matrix is nonsingular, then it is diagonalizable.
\end{corollary}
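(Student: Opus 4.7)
The plan is to derive both statements as immediate consequences of the Jordan form computation in Theorem \ref{T:jorAntiD} (equivalently, Corollary \ref{C:jorAntiDble}$(e)$), which already gives the full list of Jordan blocks for any antidiagonalizable operator. First I would reduce to the case where $M$ is itself an antidiagonal matrix: since antidiagonalizability and the Jordan form are similarity invariants, $M$ is antidiagonalizable if and only if $M$ is similar to some antidiagonal $A$, and $M$ and $A$ share the same multiset of Jordan blocks, hence the same defective eigenvalues and the same singular/nonsingular status.

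Next I would read off, from the matrix $J$ displayed in Theorem \ref{T:jorAntiD}$(a)$, exactly which eigenvalues admit Jordan blocks of size greater than $1$. The nilpotent part $\mathfrak{N}$ contributes one block $\bigl(\begin{smallmatrix}0 & 1\\ 0 & 0\end{smallmatrix}\bigr)$ per defective transpose pair, and each such block has eigenvalue $0$; the diagonal part $\mathfrak{D}$ contributes only $1\times 1$ blocks, with eigenvalues $\pm\lambda_t$ (and, for odd $n$, the center $c$). Therefore every Jordan block of size strictly greater than $1$ carries eigenvalue $0$, which is exactly the assertion that the only possibly defective eigenvalue of $M$ is $0$.

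For the equivalence, I would argue in both directions. If $M$ is nonsingular, then $0 \notin \mathrm{spec}(M)$, so by the previous paragraph no Jordan block has size greater than $1$; hence $J$ is diagonal and $M$ is diagonalizable. Conversely, if the only defective eigenvalue is $0$, then any nonsingular antidiagonalizable matrix (lacking $0$ as an eigenvalue) has no defective eigenvalues at all, and so is diagonalizable. The two statements are thus logically equivalent, justifying the ``equivalently'' in the corollary.

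There is essentially no main obstacle here: the corollary is a direct bookkeeping consequence of Theorem \ref{T:jorAntiD}. The only point requiring a sentence of care is the reduction from a general antidiagonalizable $M$ to an antidiagonal representative, which is immediate from the similarity invariance of the Jordan form but should be stated explicitly so the reader sees why Theorem \ref{T:jorAntiD}, stated for antidiagonal matrices, applies verbatim.
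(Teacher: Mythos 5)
Your proposal is correct and follows essentially the same route as the paper, which simply cites Corollary \ref{C:jorAntiDble} (the Jordan-form characterization): every Jordan block of size greater than $1$ in the form $J$ of Theorem \ref{T:jorAntiD} carries eigenvalue $0$, so nonsingularity forces diagonalizability. Your version merely spells out the bookkeeping (and the similarity-invariance reduction) that the paper leaves implicit.
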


\begin{proof}
This is a straightforward consequence of Corollary \ref{C:jorAntiDble}$(d)$.
\end{proof}

From this, we can see the eigenvectors of rank 2 of an antidiagonalizable matrix $M$ are precisely the vectors that are in the null space (kernel) of $M^2$ but not in the null space of $M$. In fact, since $\begin{psmallmatrix}
0 & a_{l+1} \\
0 & 0 
\end{psmallmatrix}^2 = 0$ in (\ref{E:genEigenvectEq}), every linearly independent pair of vectors in $\mathbb{C}^2$ are linearly independent generalized eigenvectors of rank 2 for the matrix $\begin{psmallmatrix}
0 & a_{l+1} \\
0 & 0 
\end{psmallmatrix}$, and the only restriction on these generalized eigenvectors, besides linear independence, is given by (\ref{E:defMat}). This allows for a variation of the characterization of antidiagonalizable matrices given in Corollary \ref{C:jorAntiDble}, whereby any reference to generalized eigenvectors of rank 2 is replaced with ``vectors that are in the null space of $M^2$ but not of $M$''.

Notice, though all complex antidiagonalizable matrices have a symmetric or c-symmetric spectrum, the converse is not necessarily true. In particular, any complex matrix with a symmetric spectrum or c-symmetric spectrum where a nonzero eigenvalue has a linearly independent generalized eigenvector of rank $>1$ or a zero eigenvalue has a linearly independent generalized eigenvector of rank $>2$ will not be antidiagonalizable.

However, restricted to diagonalizable matrices, antidiagonalizability and having a symmetric or c-symmetric spectrum are equivalent. Recall, by Definition \ref{D:duodiag}, duodiagonalizable matrices are defined to be matrices that are both diagonalizable and antidiagonalizable.

\begin{corollary}[Characterization of Duodiagonalizable Matrices] \label{C:duodiagAntipersym}
Let $M$ be a complex matrix. $M$ is duodiagonalizable if and only if it is diagonalizable with a symmetric or c-symmetric spectrum.

Additionally, let $M$ be traceless. $M$ is duodiagonalizable if and only if it is similar to an antipersymmetric diagonal matrix.
\end{corollary}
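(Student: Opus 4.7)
The plan is to obtain both biconditionals as almost immediate consequences of the two earlier characterizations: Corollary \ref{C:specAntiDble} (which tells us antidiagonalizable operators have a (c-)symmetric spectrum) and Corollary \ref{C:jorAntiDble} (which gives the converse under suitable hypotheses on generalized eigenvectors). Throughout I will tacitly use the fact from Definition \ref{D:duodiag} that duodiagonalizable means diagonalizable \emph{and} antidiagonalizable.

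For the first biconditional, the forward direction is immediate: if $M$ is duodiagonalizable it is in particular antidiagonalizable, so Corollary \ref{C:specAntiDble} delivers a symmetric spectrum when $n$ is even and a c-symmetric spectrum when $n$ is odd, and it is diagonalizable by definition. For the reverse direction, suppose $M$ is diagonalizable with the appropriate spectral symmetry. Then every generalized eigenvector of $M$ has rank $1$, so the ``only generalized eigenvectors of rank $\neq 1$ are of rank $2$ with eigenvalue $0$'' clause in Corollary \ref{C:jorAntiDble}(c) is vacuously satisfied. Therefore $M$ is antidiagonalizable, and combined with diagonalizability, duodiagonalizable.

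For the second biconditional, assume first that $M$ is traceless and duodiagonalizable. By the first biconditional $M$ is diagonalizable with a symmetric or c-symmetric spectrum. In the odd-$n$ c-symmetric case the center of the spectrum equals $\operatorname{tr}(M) = 0$, so the spectrum is in fact symmetric in both parities. Thus $M$ is similar to a diagonal matrix whose eigenvalues come in $\pm\lambda$ pairs (plus a single $0$ when $n$ is odd); a permutation-similarity transformation then rearranges that diagonal into the antipersymmetric form of (\ref{E:DCentAt0}), as noted in the paragraph preceding that display. Conversely, if $M$ is similar to an antipersymmetric diagonal matrix $D$, then $D$ is diagonal, has a symmetric spectrum (its $(i,i)$ and $(n+1-i,n+1-i)$ entries are negatives of each other, with a forced $0$ in the center when $n$ is odd), and is traceless. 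Hence $M$ is traceless and, by the first biconditional, duodiagonalizable.

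I do not anticipate any genuine obstacle: the argument is a direct reduction to Corollary \ref{C:jorAntiDble} and Corollary \ref{C:specAntiDble}. The only mildly delicate point is the observation in the traceless case that having trace $0$ collapses a c-symmetric spectrum to a symmetric one, which is what lets the antipersymmetric diagonal form be attained on the nose (with a central $0$ in odd dimension) rather than merely up to an additive shift.
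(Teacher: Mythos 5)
Your proof is correct and takes essentially the same route as the paper: both directions are immediate reductions to the earlier characterizations, with the forward implication coming from the spectral properties of antidiagonalizable operators and the reverse from the characterization in Corollary \ref{C:jorAntiDble} (the paper invokes the explicit $D$ of Theorem \ref{T:antiEigendec} where you invoke the vacuously satisfied generalized-eigenvector clause of Corollary \ref{C:jorAntiDble}(c), but these are interchangeable). Your observation that tracelessness collapses a c-symmetric spectrum to a symmetric one, forcing the central $0$ in the antipersymmetric form, matches the paper's use of Theorem \ref{T:antiEigendec}(i).
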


\begin{proof}
$\Rightarrow$ By Corollary \ref{C:jorAntiDble}$(d)$, a matrix $M$ that is antidiagonalizable has the Jordan canonical form $J$, up to permutation of Jordan blocks, given in Theorem \ref{T:jorAntiD}$(a)$. If $M$ is also diagonalizable, then the lower right matrix block of $J$ -- the ``diagonal part'' that has a symmetric or c-symmetric spectrum -- is the entirety of $J$. Additionally, if $M$ is traceless, then $J$ is antipersymmetric.

$\Leftarrow$ If a diagonalizable matrix $M$ has a symmetric or c-symmetric spectrum, then it is similar to $D$ given in Theorem \ref{T:antiEigendec}. However, $D$ is the diagonalization of a general antidiagonal matrix. Additionally, if $M$ is traceless, then $D$ is permutation-similar to an antipersymmetric diagonal matrix with the same elements, including multiplicities, by Theorem \ref{T:antiEigendec}$(i)$.
\end{proof}

In other words, a diagonalizable matrix is antidiagonalizable if and only if it has a symmetric or c-symmetric spectrum.

More characterizations of duodiagonalizable matrices are given in Corollary \ref{C:moreDuodiagChar}.

If it wasn't clear from Theorem \ref{T:antiEigendec}, the following should now be clear.

\begin{corollary} [Similarity Direct Sum Decomposition of a Duodiagonalizable Matrix] \label{C:duodiagDecomp}
If $M$ is a complex duodiagonalizalbe matrix of size $n$, then $M$ is similar to a direct sum of traceless $2 \times 2$ diagonal matrices, with the exception of a single $1 \times 1$ matrix as an additional summand for odd $n$. In particular, $M \simeq \mathfrak{D}$ with $\mathfrak{D}$ given in Theorem \ref{T:jorAntiD}$(b)$.
\end{corollary}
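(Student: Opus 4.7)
The plan is to leverage Theorem \ref{T:jorAntiD}$(b)$ (which already gives the decomposition $M \simeq \mathfrak{N} \oplus \mathfrak{D}$ for every antidiagonalizable $M$) and then use the extra hypothesis of diagonalizability to kill the nilpotent summand $\mathfrak{N}$, leaving only $\mathfrak{D}$.

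First I would note that, since $M$ is duodiagonalizable, $M$ is in particular antidiagonalizable, so Corollary \ref{C:jorAntiDble}$(f)$ (equivalently Theorem \ref{T:jorAntiD}$(b)$) applies and yields
\[
M \simeq \mathfrak{N} \oplus \mathfrak{D},
\]
where $\mathfrak{N}$ is a direct sum of copies of $\begin{psmallmatrix} 0 & 1 \\ 0 & 0 \end{psmallmatrix}$ indexed by the defective transpose pairs and $\mathfrak{D}$ is the direct sum of summands $\lambda_t \begin{psmallmatrix} 1 & 0 \\ 0 & -1 \end{psmallmatrix}$ (plus a single $1 \times 1$ summand $c \, (1)$ if $n$ is odd). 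The next step is to observe that diagonalizability is preserved under similarity, so the right-hand side is also diagonalizable. A block diagonal matrix is diagonalizable if and only if each of its diagonal blocks is diagonalizable (the minimal polynomial of the whole is the least common multiple of the minimal polynomials of the blocks, and a matrix is diagonalizable iff its minimal polynomial splits into distinct linear factors). Since $\begin{psmallmatrix} 0 & 1 \\ 0 & 0 \end{psmallmatrix}$ has minimal polynomial $x^2$ and therefore is not diagonalizable, the summand $\mathfrak{N}$ must be empty; equivalently, the index set $\mathcal{T}^{\complement}$ of defective transpose pairs is empty, and $M \simeq \mathfrak{D}$.

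To finish, I would simply verify that $\mathfrak{D}$ has the claimed form: each $2 \times 2$ summand $\lambda_t \begin{psmallmatrix} 1 & 0 \\ 0 & -1 \end{psmallmatrix}$ is manifestly diagonal and traceless, and for odd $n$ there is the additional $1 \times 1$ summand $c\,(1)$, which may be nonzero. As an alternative, more self-contained route, one could instead start from Corollary \ref{C:duodiagAntipersym}: for traceless $M$ we get that $M$ is similar to an antipersymmetric diagonal matrix, which is permutation-similar to the claimed direct sum of traceless $2 \times 2$ diagonal blocks; for general $M$ of odd size with c-symmetric spectrum, one peels off the center eigenvalue $c$ as the $1 \times 1$ summand, and the remaining diagonal matrix has symmetric spectrum and splits into $2 \times 2$ traceless diagonal blocks by pairing $\lambda$ with $-\lambda$.

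There is no real obstacle here; the statement is essentially a bookkeeping corollary of Theorem \ref{T:jorAntiD}$(b)$ combined with Corollary \ref{C:duodiagAntipersym}. The only subtle point to make explicit is the standard fact that a block diagonal matrix is diagonalizable iff each block is, which is what forces $\mathfrak{N}$ to be absent under the diagonalizability hypothesis.
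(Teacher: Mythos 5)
Your proposal is correct and follows essentially the same route as the paper, which simply cites Theorem \ref{T:jorAntiD}$(b)$ with $\mathfrak{N}=0$ together with Corollary \ref{C:jorAntiDble}. You have merely made explicit the justification the paper leaves implicit — that diagonalizability of $M$ forces the nilpotent summand to vanish because $\begin{psmallmatrix} 0 & 1 \\ 0 & 0 \end{psmallmatrix}$ is not diagonalizable and a block diagonal matrix is diagonalizable iff each block is — which is a sound and welcome elaboration.
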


\begin{proof}
This follows from Theorem \ref{T:jorAntiD} $(b)$ where $\mathfrak{N} = 0$ and Corollary \ref{C:jorAntiDble}.
\end{proof}

Since antidiagonalizable matrices are similar to a direct sum of $\mathfrak{N}$ and $\mathfrak{D}$ from Corollary \ref{C:jorAntiDble}$(f)$, and we have discussed the case where $\mathfrak{N} = 0$, there remains something to be said for the case where $\mathfrak{D} = 0$.

\begin{corollary} [Nilpotency and Antidiagonalizability] \label{C:nilAntidiag}
Let $M$ be a complex matrix. Any two of the following imply the third.
\begin{enumerate}[(a)]
\item $M$ is antidiagonalizable.
\item $M$ is nilpotent.
\item Every generalized eigenvector of $M$ is of rank 2.
\end{enumerate}
\end{corollary}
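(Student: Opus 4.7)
The entire corollary should reduce to the direct-sum characterization already established in Corollary \ref{C:jorAntiDble}(d), namely that $M$ is antidiagonalizable if and only if $M \simeq \mathfrak{N} \oplus \mathfrak{D}$, where $\mathfrak{N}$ is a direct sum of nilpotent blocks $\begin{psmallmatrix} 0 & 1 \\ 0 & 0 \end{psmallmatrix}$ (each contributing a Jordan chain of length 2 at eigenvalue 0) and $\mathfrak{D}$ is diagonalizable with symmetric or c-symmetric spectrum (contributing only Jordan chains of length 1). With this decomposition in hand, each of the three implications is essentially a short bookkeeping argument about which of the two summands is allowed to survive.

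I would start with $(a) \wedge (b) \Rightarrow (c)$. Write $M \simeq \mathfrak{N} \oplus \mathfrak{D}$ using (a). Nilpotency (b) forces the spectrum of $M$, and therefore of $\mathfrak{D}$, to be $\{0\}$; but a diagonalizable operator whose only eigenvalue is $0$ must be the zero operator, so $\mathfrak{D}$ contributes no nontrivial generalized eigenvectors. Thus $M \simeq \mathfrak{N}$, every Jordan chain of $M$ has length $2$, and (c) follows. Next, for $(b) \wedge (c) \Rightarrow (a)$, use nilpotency to put all eigenvalues at $0$ and use (c) to force every Jordan block into the shape $\begin{psmallmatrix} 0 & 1 \\ 0 & 0 \end{psmallmatrix}$. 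Since each such block is a traceless $2 \times 2$ matrix, Corollary \ref{C:jorAntiDble}(b) immediately gives antidiagonalizability. Finally, for $(a) \wedge (c) \Rightarrow (b)$, again write $M \simeq \mathfrak{N} \oplus \mathfrak{D}$; the diagonalizable summand $\mathfrak{D}$ can contribute only length-$1$ Jordan chains, so (c) forces $\mathfrak{D}$ to be trivial. What remains is $M \simeq \mathfrak{N}$, whose spectrum is $\{0\}$, hence $M$ is nilpotent.

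\textbf{Main obstacle:} The delicate point is pinning down exactly what condition (c) means, since read literally (``every generalized eigenvector has rank $2$'') it would exclude ordinary eigenvectors and be impossible over $\mathbb{C}$. The interpretation consistent with the parallel language used in Corollary \ref{C:jorAntiDble}(d) is that every Jordan chain of $M$ reaches rank $2$, i.e., every Jordan block has size exactly $2$ (so the top of every chain is a rank-$2$ generalized eigenvector and no chain has length greater than $2$). Under this reading, (c) has bite in both directions: it forbids length-$1$ chains (which is what kills $\mathfrak{D}$ in $(a) \wedge (c) \Rightarrow (b)$) and it forbids length-$\ge 3$ chains (which is what forces the Jordan blocks in $(b) \wedge (c) \Rightarrow (a)$ to be precisely $\begin{psmallmatrix} 0 & 1 \\ 0 & 0 \end{psmallmatrix}$). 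Once this convention is agreed upon, nothing remains beyond invoking Corollary \ref{C:jorAntiDble} and noting that a diagonalizable operator with spectrum $\{0\}$ is zero.
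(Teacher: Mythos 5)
Your proposal is correct and takes essentially the same route as the paper: the paper also reduces everything to the Jordan form / direct sum decomposition of Corollary \ref{C:jorAntiDble}, observing that nilpotency or the rank-2 condition each force $J$ to consist only of the nilpotent $\begin{psmallmatrix} 0 & 1 \\ 0 & 0 \end{psmallmatrix}$ blocks, and conversely. Your reading of condition (c) as ``every Jordan block has size exactly 2'' is precisely the interpretation the paper's own proof relies on (it, too, describes the all-$2\times 2$-nilpotent-block form as one where ``all generalized eigenvectors are of rank 2''), so your more careful handling of that point is a refinement rather than a divergence.
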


\begin{proof}
Let $M$ be antidiagonalizable, so by Corollary \ref{C:jorAntiDble}, the Jordan canonical form of $M$ is $J$ given in (\ref{E: jor}). If $M$ is nilpotent, $J$ is equal to the top left matrix block -- the nilpotent block -- for which all generalized eigenvectors are of rank 2. If, instead, we assume every generalized eigenvector of $M$ is of rank 2, then again, $J$ must be equal to the nilpotent block, so must be nilpotent.

Finally, let $M$ be a nilpotent matrix where every generalized eigenvector of $M$ is of rank 2. All such matrices have a Jordan canonical form given by the nilpotent block of $J$ from (\ref{E: jor}). But this is the Jordan canonical form of an antidiagonal matrix, so $M$ is antidiagonalizable.
\end{proof}

\subsection{Unitary Diagonalization and Normal Antidiagonal Matrices} \label{Subsect:normalAntidiag}

After discussing antidiagonalizable matrices that are diagonalizable, we are in a good position to provide a characterization of antidiagonal matrices that are unitarily diagonalizable. Recall by the spectral theorem for normal matrices, a complex matrix is unitarily diagonalizable if and only if it is normal. \cite{gDA03}

Not only is $\Lambda$ a convenient modal matrix in our diagonalization of antidiagonal matrices and essential to the definition of the generalized modal matrix function $\Lambda_G$ in converting an antidiagonal matrix to its Jordan canonical form, $\Lambda$ too plays an essential role in defining a modal matrix for the unitary diagonalization of normal antidiagonal matrices.

\begin{theorem} [Characterization of the Unitary Diagonalization of an Antidiagonal Matrix] \label{T:unitDiagAntiD}
Let $A$ be a complex antidiagonal matrix of size $n$, and let diagonal matrix $D$ be defined as in Theorem \ref{T:antiEigendec}. For odd $n$, let $C$ be the $n \times n$ identity matrix with the center element substituted with $\sqrt{2}$. Finally, let $\Lambda_U$ be defined as $\Lambda$ in Theorem \ref{T:antiEigendec} where every pair of substituting vectors $\boldsymbol{w_k}, \boldsymbol{w_{k+1}}$ from the theorem are chosen to be orthonormal under the dot product. The following are equivalent. 
\begin{enumerate} [(a)]
\item $A$ is normal.
\item Both elements of every transpose pair in $A$ have the same modulus.\footnote{Geometrically, this means each element in $A$ and its reflection across the main diagonal must be equidistant from the complex origin -- that is, differ only by a phase.}
\item For even $n$, $\frac{1}{\sqrt{2}} \Lambda_U$ is unitary. For odd $n$, $\frac{1}{\sqrt{2}} C \Lambda_U$ is unitary.
\end{enumerate}
In particular, whenever $A$ is unitarily diagonalizable, $\frac{1}{\sqrt{2}} \Lambda_U$ unitarily diagonalizes $A$ to $D$ when $n$ is even, and $\frac{1}{\sqrt{2}} C \Lambda_U$ unitarily diagonalizes $A$ to $D$ when $n$ is odd.
\end{theorem}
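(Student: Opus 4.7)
The plan is to establish the three-way equivalence by showing (a)$\Leftrightarrow$(b) and (b)$\Leftrightarrow$(c). For (a)$\Leftrightarrow$(b), I would compute $AA^*$ and $A^*A$ directly using Theorem \ref{T:propsOfAntiD}(a): the conjugate transpose of an antidiagonal matrix is antidiagonal, with antidiagonal entries $\overline{a_n},\overline{a_{n-1}},\ldots,\overline{a_1}$, so both products are products of two antidiagonal matrices and hence diagonal. A short application of the product formula yields $(AA^*)_{j,j}=|a_{n+1-j}|^2$ and $(A^*A)_{j,j}=|a_j|^2$. Normality is therefore equivalent to $|a_j|=|a_{n+1-j}|$ for every $j$, which is exactly (b), since $a_j$ at position $(n+1-j,j)$ and $a_{n+1-j}$ at position $(j,n+1-j)$ form a transpose pair.

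For (b)$\Leftrightarrow$(c), the decisive structural observation about $\Lambda_U$, read directly off the explicit form in Theorem \ref{T:antiEigendec}, is that its columns split into groups indexed by transpose pairs (plus a singleton center column when $n$ is odd), and columns lying in different groups have disjoint row supports. Consequently columns from different groups are automatically orthogonal, and the unitarity check for $\tfrac{1}{\sqrt{2}}\Lambda_U$ reduces to a local computation within each pair. For the group associated with a transpose pair $(a_k,a_{k+1})$ whose two columns carry entries $\mp\sqrt{a_k}/\sqrt{a_{k+1}}$ and $1$ in the two occupied rows, both self-inner products equal $|a_k|/|a_{k+1}|+1$ and the cross inner product equals $1-|a_k|/|a_{k+1}|$; orthogonality and the norm $\sqrt{2}$ condition thus collapse to the single local equation $|a_k|=|a_{k+1}|$. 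For a degenerate pair $(0,0)$ condition (b) is trivially true, and the freedom to choose $\boldsymbol{w_k},\boldsymbol{w_{k+1}}$ allows one to take them orthogonal and normalized so that the corresponding columns also have norm $\sqrt{2}$.

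The odd-$n$ case follows the same template with one extra ingredient: the first column of $\Lambda_U$ is $\omega$ times the center standard basis vector and lives entirely in the middle row, while by the disjoint-support structure every other column of $\Lambda_U$ vanishes in that row. Left multiplication by $C$ rescales only the middle row by $\sqrt{2}$, so the first column of $\tfrac{1}{\sqrt{2}}C\Lambda_U$ has norm $|\omega|$ and all other columns are untouched; taking $|\omega|=1$ (always possible since $\omega$ is free) makes the overall matrix unitary exactly under condition (b). Finally, in either parity, once $\tfrac{1}{\sqrt{2}}\Lambda_U$ (resp.\ $\tfrac{1}{\sqrt{2}}C\Lambda_U$) is unitary, the diagonalization $A=\Lambda_U D\Lambda_U^{-1}$ from Theorem \ref{T:antiEigendec} is automatically the desired unitary diagonalization to $D$.

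The main obstacle is the bookkeeping needed to certify the disjoint-row-support structure of the transpose-pair column groupings inside $\Lambda_U$; once that is nailed down, the entire unitarity question cleanly decouples into independent $2\times 2$ calculations plus, for odd $n$, a trivial $1\times 1$ check. A secondary subtlety is the degenerate $(0,0)$ case, where one must insist that the substituting vectors be chosen with norm consistent with the global $\tfrac{1}{\sqrt{2}}$ scaling, but this falls entirely within the latitude already granted in Theorem \ref{T:antiEigendec}.
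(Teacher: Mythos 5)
Your proposal is correct and follows essentially the same route as the paper: both establish (a)$\Leftrightarrow$(b) by element-wise comparison of $AA^*$ with $A^*A$, and (b)$\Leftrightarrow$(c) by reducing the unitarity of $\tfrac{1}{\sqrt{2}}\Lambda_U$ (via its block/disjoint-support structure) to the local condition $\abs{a_k}=\abs{a_{k+1}}$ on each transpose pair, with the degenerate $(0,0)$ pairs absorbed by the orthonormal choice of $\boldsymbol{w_k},\boldsymbol{w_{k+1}}$. Your explicit observation that one must take $\abs{\omega}=1$ in the odd case is a small point the paper's proof glosses over, but it sits comfortably within the freedom granted by Theorem \ref{T:antiEigendec} and does not change the argument.
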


\begin{proof}
Let $A$ be in general form given by (\ref{E:antidiagA}).

$(a) \Rightarrow (b)$ The necessary and sufficient condition for $A$ to be normal is $A A^* = A^* A$. Expanding both sides and making an element-wise comparison shows this condition is equivalent to $a^{}_k a^*_k = a^{}_{k+1} a^*_{k+1}$ for positive odd $k$ when $n$ is even and for positive even $k$ when $n$ is odd. In both cases, the condition is equivalent to $\abs{a_k} = \abs{a_{k+1}}$.

$(b) \Rightarrow (c)$ Let $n$ be even. The necessary and sufficient condition for $\frac{1}{\sqrt{2}} \Lambda_U$ to be unitary is $(\frac{1}{\sqrt{2}} \Lambda_U)^{-1} = (\frac{1}{\sqrt{2}} \Lambda_U)^*$. Expanding and making an element-wise comparison shows this condition is equivalent to $\frac{\sqrt{a_{k+1}}}{\sqrt{a_k}} = (\frac{\sqrt{a_k}}{\sqrt{a_{k+1}}})^*$ for positive odd $k$ where $a_k, a_{k+1}$ are nonzero, which is equivalent to $\abs{a_k} = \abs{a_{k+1}}$ for the same $k$, $a_k$, and $a_{k+1}$.\footnote{Notice if some element in a transpose pair is 0, then other must be 0 for them to have the same modulus. There is nothing to address with positive odd $k$ where $a_{k} = a_{k+1} = 0$ because, by definition of $\Lambda_U$, the elements of the orthonormal vectors $\boldsymbol{w_k}, \boldsymbol{w_{k+1}}$ from Theorem \ref{T:antiEigendec} are not dependent on any $k$. That the chosen vectors be orthonormal is all that's required. The state of affairs for odd $n$ is similar.}

Similarly, let $n$ be odd. The necessary and sufficient condition for $\frac{1}{\sqrt{2}} C \Lambda_U$ to be unitary is $(\frac{1}{\sqrt{2}} C \Lambda_U)^{-1} = (\frac{1}{\sqrt{2}} C \Lambda_U)^*$. Expanding and making an element-wise comparison as before shows this condition is equivalent to $\frac{\sqrt{a_k}}{\sqrt{a_{k+1}}} = (\frac{\sqrt{a_{k+1}}}{\sqrt{a_k}})^*$ for positive even $k$ where $a_k, a_{k+1}$ are nonzero, which is equivalent to $\abs{a_k} = \abs{a_{k+1}}$ for the same $k$, $a_k$, and $a_{k+1}$.

$(c) \Rightarrow (a)$ Let diagonal matrix $D$ be defined as in Theorem \ref{T:antiEigendec}. Let $\frac{1}{\sqrt{2}} \Lambda_U$ be unitary for even $n$ and $\frac{1}{\sqrt{2}} C \Lambda_U$ be unitary for odd $n$. We will show $\frac{1}{\sqrt{2}} \Lambda_U$ diagonalizes $A$ to $D$ for even $n$ and $\frac{1}{\sqrt{2}} C \Lambda_U$ diagonalizes $A$ to $D$ for odd $n$.

Let $n$ be even. $\frac{1}{\sqrt{2}} \Lambda_U$ diagonalizes $A$ to $D$ since
\[
A = (\frac{1}{\sqrt{2}} \Lambda_U) D (\frac{1}{\sqrt{2}} \Lambda_U)^{-1} = \frac{1}{\sqrt{2}} \Lambda_U D \sqrt{2} \Lambda_U^{-1} = \Lambda_U D \Lambda_U^{-1},
\]
and $\Lambda_U$ is a special case of $\Lambda$, which already diagonalizes $A$. In particular, $\Lambda$ is defined up to a choice of a pair $\boldsymbol{w_k}, \boldsymbol{w_{k+1}}$ of linearly independent vectors in $\mathbb{C}^2$ for each positive odd $k$ where $a_{k} = a_{k+1} = 0$, and $\Lambda_U$ is defined to be the same except $\boldsymbol{w_k}, \boldsymbol{w_{k+1}}$ are chosen to be orthonormal with respect to the dot product. However, orthonormal vectors are always linearly independent.

The proof for odd $n$ is essentially the same with $\frac{1}{\sqrt{2}} C \Lambda_U$ taking the place of $\frac{1}{\sqrt{2}} \Lambda_U$.

\end{proof}

Of course, if some complex matrix $M$ is unitarily antidiagonalizable to a normal antidiagonal matrix $A$, then $M$ is normal due to the composition of unitary transformations being unitary. We can call such a matrix $M$ \emph{unitarily duodiagonalizable}. Given the similarity transformation matrix for the unitary antidiagonalization, Theorem \ref{T:unitDiagAntiD} can be used to find the modal matrix that unitarily diagonalizes $M$.

\[
\begin{tikzcd}
M \arrow[rd,"", "unitary" {xshift = 3.2 ex, yshift = 2.2 ex}, "diagonalization" yshift=0.5ex] \arrow[d, "unitary"' {xshift = -6.5 ex, yshift = 1ex}, "antidiagonalization"' {xshift = -2ex, yshift = -1 ex}] &                  \\
A                                \arrow[r, "unitary" yshift=-3.5ex,"diagonalization" yshift=-5.2ex] & D 
\end{tikzcd}
\]

\subsection{Symmetric and Antisymmetric Antidiagonalizations of Duodiagonalizable Matrices} \label{Subsect:symAndAntisym}

There is significantly more freedom in antidiagonalizing matrices that are duodiagonalizable than in diagonalizing them. Going in the reverse direction from that of Theorem \ref{T:antiEigendec} -- from diagonalization to antidiagonalization -- a duodiagonalizable matrix $M$ is antidiagonalizable in many ways. If $M$ is normal, $M$ is unitarily antidiagonalizable in many ways. In both cases, the diagonalizing matrix $\Lambda_l$ need not depend on $M$. We now provide a few of the nicest antidiagonalizations and unitary antidiagonalizations of such matrices. Recall, from Corollary \ref{C:duodiagAntipersym}, a matrix is diagonalizable and antidiagonalizable if and only if it is diagonalizable with a symmetric or c-symmetric spectrum.

\begin{corollary} [Symmetric Antidiagonalization of a Duodiagonalizable Matrix] \label{C:antidiagSym}
Let $M$ be a complex antidiagonalizable matrix of size $n$ that is diagonalizable to $D_1$ via modal matrix $V_1$. $M$ is antidiagonalizable, via a matrix $\Lambda_1$ that does not depend on $M$, to a symmetric antidiagonal matrix $A'_1$.
\begin{enumerate}[i.]
\item An explicit antidiagonalization $V_1^{-1} M V_1^{} = D_1 = \Lambda^{-1}_1 A'^{}_1 \Lambda^{}_1$ is given by setting $a_k = a_{k+1}$ in Theorem \ref{T:antiEigendec}, where positive $k$ is odd for even $n$ and even for odd $n$.
\item The antidiagonalization is unitary\footnote{in fact, special orthogonal} if and only if $M$ is unitarily diagonalizable (i.e. normal).
\end{enumerate}
\end{corollary}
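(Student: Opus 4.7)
The overall strategy is to build the antidiagonalization of $M$ by composing the given diagonalization $V_1$ with a fixed, $M$-independent similarity $\Lambda_1^{-1}$ chosen so that $\Lambda_1$ diagonalizes a symmetric antidiagonal matrix to $D_1$. Concretely, writing $M = V_1 D_1 V_1^{-1}$ and seeking $\Lambda_1$ with $A'_1 := \Lambda_1 D_1 \Lambda_1^{-1}$ symmetric antidiagonal, the identity
\[
M = \bigl(V_1 \Lambda_1^{-1}\bigr)\,A'_1\,\bigl(V_1 \Lambda_1^{-1}\bigr)^{-1}
\]
yields the desired antidiagonalization with similarity matrix $V_1 \Lambda_1^{-1}$.

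To construct $\Lambda_1$, I would invoke Theorem~\ref{T:antiEigendec} in reverse. By Corollary~\ref{C:duodiagAntipersym}, the spectrum of $D_1$ is symmetric for even $n$ and c-symmetric for odd $n$, so its eigenvalues pair as $\pm\mu_1,\ldots,\pm\mu_{\lfloor n/2\rfloor}$ (with an unpaired center eigenvalue when $n$ is odd). Build $A'_1$ as the antidiagonal matrix whose transpose pairs satisfy $a_k = a_{k+1}$, with the common value chosen so that $\sqrt{a_k}\sqrt{a_{k+1}}$ equals the appropriate $\mu_j$ (and the center antidiagonal entry equal to the center eigenvalue when $n$ is odd). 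The equality $a_k = a_{k+1}$ ensures $A'_1$ is symmetric antidiagonal, while Theorem~\ref{T:antiEigendec} identifies its diagonalization as $\Lambda D \Lambda^{-1}$ for the modal matrix $\Lambda$ of (\ref{E: evenLam})/(\ref{E: oddLam}). The decisive observation is that, once $a_k = a_{k+1}$, every nontrivial entry of this modal matrix becomes $\pm\sqrt{a_k}/\sqrt{a_{k+1}} = \pm 1$ (taking the free parameter $\omega$ in the odd case to be any fixed nonzero constant, say $1$), so the resulting matrix $\Lambda_1$ is a fixed $\{-1,0,1\}$-valued matrix depending only on $n$. This gives part~(i), after composing with a permutation matrix if necessary to match the ordering of eigenvalues in $D_1$.

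For part~(ii), since a similarity is unchanged by nonzero scalar rescaling of the similarity matrix, the antidiagonalization is unitary precisely when $\sqrt{2}\,V_1 \Lambda_1^{-1}$ is unitary. In the forward direction, if $M$ is normal then the spectral theorem lets us take $V_1$ unitary, and since $a_k = a_{k+1}$ trivially gives $\abs{a_k} = \abs{a_{k+1}}$, Theorem~\ref{T:unitDiagAntiD} supplies that $\tfrac{1}{\sqrt{2}}\Lambda_1$ (or $\tfrac{1}{\sqrt{2}}C\Lambda_1$ in the odd case) is unitary; the composite is then unitary, and in fact real orthogonal since $\Lambda_1$ is real-valued. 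Conversely, if the antidiagonalization is unitary, right-multiplying by the already-unitary scaled $\Lambda_1$-factor forces $V_1$ to be unitary, whence normality of $M$ follows from the spectral theorem.

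The main subtlety I anticipate is upgrading ``orthogonal'' to ``special orthogonal'' as claimed in the footnote: the argument above only controls the determinant up to sign. To pin this down, I would compute $\det(\tfrac{1}{\sqrt{2}}\Lambda_1)$ (or its odd-case analogue) as an explicit function of $n$; if it is $-1$, the sign can be absorbed by swapping the entries of a single transpose pair of $A'_1$ (permitted by the discussion following Theorem~\ref{T:permQuasi}) or by scaling one eigenvector column of $V_1$ by $-1$. The remainder of the corollary is then a direct specialization of Theorems~\ref{T:antiEigendec} and~\ref{T:unitDiagAntiD}.
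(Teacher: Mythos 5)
Your proposal is correct and follows essentially the same route as the paper: set $a_k = a_{k+1}$ in Theorem \ref{T:antiEigendec} so that $\Lambda$ collapses to a fixed $\{-1,0,1\}$-valued matrix independent of $M$, obtain the symmetric antidiagonal $A'_1$ from the paired eigenvalues of $D_1$, and invoke Theorem \ref{T:unitDiagAntiD} to settle unitarity by composing with the (unitary) diagonalization of $M$. Your added caveats about reordering $D_1$ by a permutation and about verifying the determinant for the ``special orthogonal'' footnote are reasonable refinements the paper glosses over --- though note that only the $\tfrac{1}{\sqrt{2}}\Lambda_1$ factor is real orthogonal, not the full composite $V_1\Lambda_1^{-1}$ when $V_1$ is genuinely complex.
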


\begin{proof}
If a complex matrix $M$ of size $n$ is duodiagonalizable, then $M$ diagonalizes to a diagonal matrix $D_1$ with a symmetric or c-symmetric spectrum given in Theorem \ref{T:antiEigendec} with $a_k = a_{k+1}$, where positive $k$ is odd for even $n$ and even for odd $n$. If $M$ is normal, then $M$ unitarily diagonalizes to $D_1$. The antidiagonal matrix $A$ induced from Theorem \ref{T:antiEigendec} under this specification for $a_k$ is normal, and we denote it $A'_1$. Notice the resulting $\Lambda$ does not depend on any element of $M$, $D_1$, or $A'_1$.

With this, Theorem \ref{T:unitDiagAntiD} specifies an explicit unitary diagonalization of $A'_1$ into $D_1$. Reversing this gives an explicit unitary antidiagonalization of $D_1$ into $A'_1$. Composing the unitary antidiagonalization of $D_1$ with the diagonalization of $M$ defines $\Lambda_1$ and gives an explicit antidiagonalization of $M$ that is unitary if and only if $M$ is normal.
\[
\begin{tikzcd}
M \arrow[rd,"", "antidiagonalization" yshift=1ex] \arrow[d, "diagonalization"' xshift = -2ex] &                  \\
D_1                                \arrow[r, "unitary" yshift=-4ex,"antidiagonalization" yshift=-5.7ex] & A'_1 
\end{tikzcd}
\]
\end{proof}

The assumption that $M$ is diagonalizable is necessary. If $M$ is not diagonalizable, then it has no symmetrization much less a symmetric antidiagonalization. Thus, we can conclude a matrix is duodiagonalizable if and only if it is symmetrically antidiagonalizable.

The transforming matrix $\Lambda$ from Theorem \ref{T:antiEigendec} under the assignment $a_k = a_{k+1}$, where positive $k$ is odd for even $n$ and even for odd $n$ (as specified in Corollary \ref{C:antidiagSym}) is particularly useful and enlightening, so is given below. Incidentally, there is some structural resemblance between $\Lambda$ under this assignment and adjacency matrices for binary trees.

\begin{equation} \label{lambda1}
\begin{psmallmatrix}
0		&	0 	& 	0	&	0	&\hdots\vphantom{\vdots}	&-1						&1							\\
0		&	0	& 	0	&	0	&\iddots\vphantom{\vdots}	&0						&0							\\
0		&	0	&	-1	&	1	&\hdots\vphantom{\vdots}	&0						&0							\\
-1		&	1	&	0	&	0	&\hdots\vphantom{\vdots}	&\vdots					&\vdots						\\
1		&	1	&	0	&	0	&\hdots\vphantom{\vdots}	&\vdots					&\vdots						\\
0		&	0	&	1	&	1	&\hdots\vphantom{\vdots}	&0						&0							\\
0		&	0	&	0	&	0	&\ddots\vphantom{\vdots}	&0						&0							\\
\hphantom{0}0\hphantom{0}	&\hphantom{0}0\hphantom{0}	&\hphantom{0}0\hphantom{0}	&\hphantom{0}0\hphantom{0}	&\hdots\vphantom{\vdots}	&\hphantom{0}1\hphantom{0}	&\hphantom{0}1\hphantom{0}
\end{psmallmatrix} \text{ even n \hphantom{www}}
\begin{psmallmatrix}
0 				& 	0					&	0					&\hdots\vphantom{\vdots}			&-1						&1							\\
0				& 	0					&	0					&\iddots\vphantom{\vdots}		&0						&0							\\
0				&	-1					&	1					&\hdots\vphantom{\vdots}		&0						&0							\\
1				&	0					&	0					&\hdots\vphantom{\vdots}		&\vdots					&\vdots						\\
0				&	1					&	1					&\hdots\vphantom{\vdots}		&0						&0							\\
0				&	0					&	0					&\ddots\vphantom{\vdots}			&0						&0							\\
\hphantom{0}0\hphantom{0}	&\hphantom{0}0\hphantom{0}	&\hphantom{0}0\hphantom{0}	&\hdots\vphantom{\vdots}	&\hphantom{0}1\hphantom{0}	&\hphantom{0}1\hphantom{0}	
\end{psmallmatrix} \text{ odd n}
\end{equation}

Moreover, every duodiagonalizable matrix $M$ is antidiagonalizable, via matrices $\Lambda_2,\Lambda_3$ that do not depend on $M$, to antidiagonal matrices $A'_2,A'_3$ that are antisymmetric if and only if $M$ is traceless.

\begin{corollary} [Antisymmetric Antidiagonalization of a Duodiagonalizable Matrix] \label{C:antidiagAntisym}
Let $M$ be a complex antidiagonalizable matrix of size $n$ that is diagonalizable to $D_2,D_3$ via modal matrices $V_2,V_3$, respectively. $M$ is antidiagonalizable, via matrices $\Lambda_2,\Lambda_3$ that do not depend on $M$, to antidiagonal matrices $A'_2,A'_3$ that are antisymmetric if and only if $M$ is traceless.
\begin{enumerate}[i.]
\item In Theorem \ref{T:antiEigendec}, letting positive $k$ be odd for even $n$ and even for odd $n$, 

\hspace{\parindent} setting $a_{k+1} = \imath \sqrt{a_k}$ prescribes explicit antidiagonalization $V_2^{-1} M V_2^{} = D_2 = \Lambda^{}_2 A^{}_2 \, \Lambda^{-1}_2$, and 

\hspace{\parindent} setting $a_{k+1} = - \imath \sqrt{a_k}$ prescribes explicit antidiagonalization $V_3^{-1} M V_3^{} = D_3 = \Lambda^{}_3 A^{}_3 \, \Lambda^{-1}_3$.
\item The antidiagonalizations are unitary if and only if $M$ is unitarily diagonalizable (i.e. normal).
\end{enumerate}
\end{corollary}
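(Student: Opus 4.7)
The plan is to mirror the proof of Corollary \ref{C:antidiagSym} closely, substituting the symmetric parameterization $a_k = a_{k+1}$ with the two antisymmetric parameterizations $a_{k+1} = \pm\imath\sqrt{a_k}$. First I would use the hypothesis that $M$ is duodiagonalizable together with Theorem \ref{T:antiEigendec} to write $V_j^{-1} M V_j = D_j$ where $D_j$ has the symmetric spectrum (for even $n$) or c-symmetric spectrum (for odd $n$) required to realize it as the diagonalization of an antidiagonal matrix via that theorem. Next I would invert Theorem \ref{T:antiEigendec}'s diagonalization of that antidiagonal matrix to antidiagonalize $D_j$ to an antidiagonal $A'_j$, and finally compose the two similarity transformations to antidiagonalize $M$ itself.

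The critical middle step is verifying that under either parameterization the modal matrix $\Lambda$ of Theorem \ref{T:antiEigendec} does not depend on $M$. Since the $M$-dependence of $\Lambda$ enters only through the ratios $\pm\sqrt{a_k}/\sqrt{a_{k+1}}$ in its off-diagonal entries, I would substitute the prescribed relations (consistent with the square-root conventions used throughout Theorem \ref{T:antiEigendec}, so that the transpose-pair relation $a_{k+1} = -a_k$ is enforced) and check that these ratios collapse to the constants $\mp\imath$ (for $A'_2$) and $\pm\imath$ (for $A'_3$). Once this is established, $\Lambda_2$ and $\Lambda_3$ are fixed matrices independent of $M$, $D_j$, and $A'_j$, exactly as in the symmetric case.

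For the antisymmetry clause, I would verify directly that the parameterization forces each off-center transpose pair $(a_k, a_{k+1})$ to satisfy $a_{k+1} = -a_k$, which gives antisymmetry of every off-center $2 \times 2$ block under the permutation-similarity of Theorem \ref{T:permQuasi}. For even $n$, Corollary \ref{C:specAntiDble} makes $\text{tr}(M) = 0$ automatic, so $A'_j$ is antisymmetric unconditionally. For odd $n$, the center entry $a_1$ of $A'_j$ lies on the main diagonal and equals $\text{tr}(M)$ by Corollary \ref{C:specAntiDble}; antisymmetry therefore requires precisely that $M$ be traceless, giving the stated ``if and only if''.

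For part ii, an antisymmetric antidiagonal matrix has equal-modulus transpose pairs, so it is normal and Theorem \ref{T:unitDiagAntiD} supplies an explicit unitary diagonalization of $A'_j$ into $D_j$. Reversing this gives a unitary antidiagonalization of $D_j$, and composing with $V_j$ yields an antidiagonalization of $M$ that is unitary if and only if $V_j$ is unitary, i.e., if and only if $M$ is normal. The main obstacle I expect is the first verification: interpreting the parameterization $a_{k+1} = \pm\imath\sqrt{a_k}$ consistently with the square-root choices implicit in Theorem \ref{T:antiEigendec} so that the ratios in $\Lambda$ genuinely reduce to constants; once that is in hand, the rest is a clean adaptation of the proof of Corollary \ref{C:antidiagSym}, and the diagram
\[
\begin{tikzcd}
M \arrow[rd,"", "antidiagonalization" yshift=1ex] \arrow[d, "diagonalization"' xshift = -2ex] & \\
D_j \arrow[r, "unitary" yshift=-4ex,"antidiagonalization" yshift=-5.7ex] & A'_j
\end{tikzcd}
\]
makes explicit the compositional structure of the argument.
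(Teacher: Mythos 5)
Your proposal is correct and follows essentially the same route as the paper, whose proof simply defers to Corollary \ref{C:antidiagSym} with the modified parameterization; your added care about the ``iff traceless'' clause (the center element of $A'_j$ equals $\mathrm{tr}(M)$ for odd $n$, while even $n$ is automatic) and about normality of the resulting antidiagonal form for part ii is exactly the intended argument. Your decision to read the parameterization as enforcing $a_{k+1} = -a_k$ (so the ratios in $\Lambda$ collapse to $\pm\imath$) is in fact necessary, since the literal condition $a_{k+1} = \imath\sqrt{a_k}$ would give $a_k = -a_{k+1}^2$ rather than an antisymmetric transpose pair, so your reinterpretation repairs rather than departs from the paper's proof.
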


\begin{proof}
The proof is essentially the same as that for Corollary \ref{C:antidiagSym} except with the conditions $a_{k+1} = \imath \sqrt{a_k}$ and $a_{k+1} = - \imath \sqrt{a_k}$.
\end{proof}

Here, too, the assumption that $M$ is diagonalizable is necessary. If $M$ is not diagonalizable, then it has no antisymmetrization, so no antisymmetric antidiagonalization. Therefore, a traceless matrix is duodiagonalizable if and only if it is antisymmetrically antidiagonalizable.

We formalize the characterizations given in the preceding discussions for duodiagonalizable matrices in a corollary.

\begin{corollary} [Further Characterizations of Duodiagonalizable Matrices] \label{C:moreDuodiagChar}
A complex matrix is duodiagonalizable if and only if it is symmetrically antidiagonalizable.

A traceless complex matrix is is duodiagonalizable if and only if it is antisymmetrically antidiagonalizable.
\end{corollary}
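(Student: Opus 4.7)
The plan is to package both biconditionals as two applications of the same template: the forward directions are already established by the two immediately preceding corollaries, while the reverse directions reduce via Theorem \ref{T:antiEigendec} to checking that the target antidiagonal forms have no defective transpose pairs.

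For the first statement, I would argue the forward direction is exactly the content of Corollary \ref{C:antidiagSym} (parts $i.$ and the definition of symmetric antidiagonalization produced there), which already exhibits an explicit symmetric antidiagonalization from the hypothesis of duodiagonalizability. For the reverse direction, suppose $M$ is symmetrically antidiagonalizable, so $M \simeq A$ for some symmetric antidiagonal matrix $A$. In the general form (\ref{E:antidiagA}), symmetry of $A$ forces each transpose pair $\{a_k, a_{k+1}\}$ to consist of two equal entries, so in particular one is zero if and only if the other is; hence no transpose pair of $A$ is defective. By the equivalence $(a) \Leftrightarrow (c)$ in Theorem \ref{T:antiEigendec}, $A$ is diagonalizable. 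Since $M \simeq A$, $M$ is also diagonalizable, and since antidiagonalizability was part of the hypothesis, $M$ is duodiagonalizable.

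The second statement is handled identically. The forward direction is Corollary \ref{C:antidiagAntisym} applied to a traceless duodiagonalizable $M$. For the reverse direction, let $M$ be antisymmetrically antidiagonalizable, so $M \simeq A$ with $A$ antidiagonal and antisymmetric. Antisymmetry gives $\mathrm{tr}(A) = 0$, so $M$ is traceless; antisymmetry also forces $a_k = -a_{k+1}$ on each antidiagonal transpose pair (and the center element to vanish for odd $n$), so again one entry in the pair is zero exactly when the other is, and no transpose pair is defective. Theorem \ref{T:antiEigendec} again yields that $A$, and therefore $M$, is diagonalizable, so $M$ is duodiagonalizable.

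There is no real obstacle here; the corollary is essentially a bookkeeping consolidation of Corollaries \ref{C:antidiagSym} and \ref{C:antidiagAntisym} together with the diagonalizability criterion $(c)$ of Theorem \ref{T:antiEigendec}. The only subtle point worth flagging in the write-up is that the symmetric/antisymmetric hypothesis on the antidiagonal form is what rules out defective transpose pairs — it is precisely this structural constraint, rather than any spectral argument, that produces diagonalizability of the intermediate antidiagonal matrix and hence of $M$ itself.
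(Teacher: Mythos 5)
Your proof is correct, and its forward directions coincide with the paper's (both simply cite Corollaries \ref{C:antidiagSym} and \ref{C:antidiagAntisym}). The reverse directions, however, take a genuinely different and in fact more robust route. The paper disposes of the converse by asserting that a non-diagonalizable matrix ``has no symmetrization, much less a symmetric antidiagonalization'' (and likewise for antisymmetrizations); taken literally over $\mathbb{C}$ this premise is shaky, since every complex square matrix is similar to a complex symmetric matrix and complex symmetric matrices need not be diagonalizable (e.g.\ $\begin{psmallmatrix} 1 & \imath \\ \imath & -1 \end{psmallmatrix}$ is symmetric and nilpotent). You avoid this entirely by exploiting the special structure of the target: a symmetric (resp.\ antisymmetric) \emph{antidiagonal} matrix has $a_k = a_{k+1}$ (resp.\ $a_k = -a_{k+1}$) on every antidiagonal transpose pair, so no pair is defective, and the equivalence $(a) \Leftrightarrow (c)$ of Theorem \ref{T:antiEigendec} then delivers diagonalizability of the antidiagonal form and hence of $M$. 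This localizes the argument to the defective-transpose-pair criterion rather than to a general claim about symmetrizations, and as a bonus your antisymmetric case shows the tracelessness hypothesis in the second biconditional is automatic in the reverse direction. The only cost is an explicit appeal to Theorem \ref{T:antiEigendec}, which the paper's one-line proof does not make; the benefit is that your version closes a genuine gap in the paper's justification.
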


\begin{proof}
The proof is given in Corollaries \ref{C:antidiagSym} and \ref{C:antidiagAntisym} as well as in the discussions that follow.
\end{proof}

Corollary \ref{C:antidiagAntisym} fosters another proof for Corollary \ref{C:realAntisymAntidiag}, where we conclude every real antisymmetric matrix $M$ is orthogonally antidiagonalizable to a real antisymmetric antidiagonal matrix. We outline a proof here. Real antisymmetric matrices are normal, so are orthogonally diagonalizable, and because every matrix is similar to its transpose, the structure of a real antisymmetric matrix is such that it has a symmetric spectrum or c-symmetric spectrum. Hence, it is antidiagonalizable by Corollary \ref{C:duodiagAntipersym} and is then orthogonally antidiagonalizable to a real antisymmetric antidiagonal matrix by Corollary \ref{C:antidiagAntisym}.

\subsection{Centrosymmetric Diagonalization and Antidiagonalization} \label{Subsect:centro}

Recall a matrix is \emph{centrosymmetric} if and only if it is symmetric about its center. Letting $A$ and $\Lambda$ be defined as in Theorem \ref{T:antiEigendec}, if $A$ is nonsingular, the matrix obtained by replacing all nonzero elements of $\Lambda$ with 1 (equivalently, taking the absolute value of every element in the matrices in (\ref{lambda1})) can be transformed to a centrosymmetric matrix via a permutation of its columns (which is allowed because a modal matrix remains model across any permutation of its columns). Similar conclusions can be drawn for all other derived $\Lambda$'s in this paper. Centrosymmetric matrices lurk in the background of theorems about antidiagonal matrices and antidiagonalization. This is partially investigated in \cite{aS16}. We can see this even more profoundly exemplified in Theorem \ref{T:centroUnit}.

Recall from Definition \ref{D:excMat} the exchange matrix $E$ is the antidiagonal matrix consisting of 1s along its antidiagonal. We presented $E$ as the simplest nonsingular antidiagonal matrix, but there is an important way in which $E$ is also the simplest nonsingular centrosymmetric matrix. In a way akin to how $E$ transforms problems about antidiagonal matrices into problems about diagonal matrices and vice versa (see the discussion following Definition \ref{D:excMat}), centrosymmetric matrices more generally allow us to transform problems about antidiagonalizable matrices into problems about diagonalizable matrices, and vice versa. This conversion even preserves unitarity.

\begin{theorem} [Centrosymmetric Diagonalization and Antidiagonalization] \label{T:centroUnit} 
   Let $M$ be a complex matrix, and let $C$ be a centrosymmetric matrix.
   \begin{enumerate}[(a)]
      \item $C$ diagonalizes $E M$ and $M E$ if and only if $C$ antidiagonalizes $M$.
      
      The diagonalization and antidiagonalization are both unitary if and only if $C$ is unitary.
      \item $C$ diagonalizes $M$ if and only if $C$ antidiagonalizes $E M$ and $M E$.
      
      The diagonalization and antidiagonalization are both unitary if and only if $C$ is unitary.
   \end{enumerate}
\end{theorem}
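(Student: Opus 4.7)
The key observation driving the proof is the algebraic reformulation of centrosymmetry: a matrix $C$ is centrosymmetric if and only if $ECE = C$, which (since $E = E^{-1}$) is equivalent to the commutation relation $EC = CE$, and hence also to $EC^{-1} = C^{-1}E$. My first step will be to record this as a lemma (or simply an opening remark), because every subsequent manipulation reduces to sliding $E$ past $C$ or $C^{-1}$. The second ingredient, already established in Section \ref{Sect:props}, is that left and right multiplication by $E$ each define an isomorphism between the set of diagonal matrices and the set of antidiagonal matrices: if $A$ is antidiagonal then $EA$ and $AE$ are diagonal, and if $D$ is diagonal then $ED$ and $DE$ are antidiagonal.

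For part $(a)$, in the forward direction I assume $C$ antidiagonalizes $M$, so $M = C A C^{-1}$ for some antidiagonal $A$. Then
\[
EM = E C A C^{-1} = C (EA) C^{-1}, \qquad ME = C A C^{-1} E = C (AE) C^{-1},
\]
using $EC = CE$ and $C^{-1}E = EC^{-1}$. Since $EA$ and $AE$ are both diagonal, $C$ diagonalizes each of $EM$ and $ME$. For the converse, if $C$ diagonalizes $EM$, so $EM = C D C^{-1}$ for some diagonal $D$, then $M = E \cdot EM = E C D C^{-1} = C (ED) C^{-1}$, and $ED$ is antidiagonal; the case $ME = CDC^{-1}$ is handled analogously by writing $M = ME \cdot E = C(DE)C^{-1}$. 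Part $(b)$ is proved by the same template with the roles of diagonal and antidiagonal swapped: if $M = CDC^{-1}$ with $D$ diagonal, then $EM = C(ED)C^{-1}$ and $ME = C(DE)C^{-1}$ exhibit antidiagonalizations of $EM$ and $ME$ by $C$; conversely, if $EM = CAC^{-1}$ with $A$ antidiagonal, then $M = C(EA)C^{-1}$ with $EA$ diagonal.

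For the unitarity clauses, I note that the similarity transformation matrix on every side of the equivalences in both $(a)$ and $(b)$ is the same matrix $C$. Consequently, each of the listed diagonalizations and antidiagonalizations is unitary precisely when $C$ itself is unitary, which immediately gives the asserted biconditional. (The exchange matrix $E$ intervenes only through multiplication by the matrix being transformed, not as part of the similarity transformation, and $E$ is itself unitary, so it does not affect this characterization.)

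I expect no serious obstacle: the proof is a direct algebraic unwinding whose entire content is the commutation $EC = CE$ together with the $E$-isomorphism between diagonal and antidiagonal matrices. The only care needed is to present the two directions of each biconditional cleanly and to handle both $EM$ and $ME$ in parallel, which can be done in one line each by the symmetric calculation above.
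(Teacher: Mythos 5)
Your proposal is correct and follows essentially the same route as the paper: both proofs reduce everything to the commutation $EC=CE$ characterizing centrosymmetry, the fact that multiplication by $E$ interchanges diagonal and antidiagonal matrices, and the involutive property $E^2=I$. The one small improvement in your version is that you obtain $EC^{-1}=C^{-1}E$ directly from $EC=CE$, whereas the paper invokes the (heavier) cited fact that the pseudoinverse of a centrosymmetric matrix is centrosymmetric.
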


\begin{proof}
Note any matrix $C$ is centrosymmetric if and only if it commutes with $E$. \cite{mY12} With this, let $C$ be a centrosymmetric matrix.

$(a)$ $\Rightarrow$ If $C$ diagonalizes $E M$ and $M E$, then $E M = C D C^{-1}$ for some diagonal matrix $D$. However,
\[
\begin{aligned}
E M &= C D C^{-1} \\
&= C E A C^{-1} \\
&= E C A C^{-1} \\
M &= C A C^{-1},
\end{aligned}
\]
where we have used the fact that $D = E A$ for some antidiagonal matrix $A$, and $E$ is an involution. Notice if $C$ is unitary, then the antidiagonalization is unitary.

$\Leftarrow$ If $C$ antidiagonalizes $M$, then $M = C A C^{-1}$ for some antidiagonal matrix $A$. However,
\[
\begin{aligned}
M &= C A C^{-1} \\
&= C E D C^{-1} \\
&= E C D C^{-1} \\
E M &= C D C^{-1},
\end{aligned}
\]
where we have used the fact that $A = E D$ for some diagonal matrix $D$, and $E$ is an involution. Furthermore, if $C$ is unitary, then the diagonalization is unitary. Similarly,
\[
\begin{aligned}
M &= C A C^{-1} \\
&= C D E C^{-1} \\
&= C D C^{-1} E \\
M E &= C D C^{-1},
\end{aligned}
\]
where we have used the fact that $A = E D$ for some diagonal matrix $D$, $E$ is an involution, and $E$ commutes with $C^{-1}$ because the pseudoinverse (and Drazin inverse) of a centrosymmetric matrix is centrosymmetric \cite{wYP73}, so the inverse of a centrosymmetric matrix is centrosymmetric. Again, if $C$ is unitary, then the diagonalization is unitary.

$(b)$ The proof for part $(b)$ is the same as the proof for part $(a)$ under the substitution $D \leftrightarrow A$ and by swapping ``diagonalizes'' with ``antidiagonalizes''.
\end{proof}

\section{Antidiagonalizable Matrix Unitary Similarity Direct Sum Decomposition, Schur Decomposition, and Quasidiagonalization} \label{Sect:unitDirectSum} 

We can now build a general Schur decomposition \footnote{see Definition \ref{D:schurDef}}, which will also function as a unitary quasidiagonalization and a unitary upper bidiagonalization, and our third and last direct sum decomposition -- the direct sum decomposition up to unitary similarity -- from $2 \times 2$ building blocks. In the interest of theory and generalization, we find the building blocks with the most degrees of freedom possible.

\begin{theorem} [Schur Decomposition of a $2 \times 2$ Antidiagonal Matrix with Maximal Degrees of Freedom] \label{C:2by2Schur}
Consider general complex $2 \times 2$ antidiagonal matrix $A_2 = \begin{psmallmatrix}	0	&	a_1	\\	a_2	&	0	\end{psmallmatrix}$, where $a_1 = r_1 {\rm e}^{\imath \theta_1}$ and $a_2 = r_2 {\rm e}^{\imath \theta_2}$ for $r_1,r_2 \ge 0$ but not both 0, and $\theta_1,\theta_2 \in \mathbb{R}$. Without loss of generality, a Schur decomposition of $A_2$ is $T = \Gamma A_2 \Gamma^{-1}$, with unitary similarity transformation matrix
\begin{equation} \label{E: ups1}
\Gamma = \begin{pmatrix}
-{\rm e}^{\imath \phi} \sqrt{\frac{r_1}{r_1 + r_2}}								&	{\rm e}^{\imath (\phi + \frac{1}{2}(\theta_1 - \theta_2))} \sqrt{\frac{r_2}{r_1 + r_2}}	\\
{\rm e}^{\imath ((t - \phi) + \frac{1}{2}(\theta_2 - \theta_1))} \sqrt{\frac{r_2}{r_1 + r_2}}	&	{\rm e}^{\imath (t - \phi)} \sqrt{\frac{r_1}{r_1 + r_2}}
\end{pmatrix},
\end{equation}
and Schur form
\begin{equation} \label{E: s1}
T = \begin{pmatrix}
-\sqrt{a_1 a_2}	&	\hphantom{w} {\rm e}^{\imath (2 \phi + \theta_1 - t)} (r_2 - r_1)	\\
0			&	\hphantom{w} \sqrt{a_1 a_2}
\end{pmatrix},
\end{equation}
for all $\phi,t \in \mathbb{R}$.\footnote{Notice $\phi$ and $t$ are extra degrees of freedom as they do not depend on $A_2$.}

\begin{enumerate}[i.]
\item This Schur decomposition has the maximum number of degrees of freedom in the sense that no other Schur decomposition in more variables exists where all variables are independent.
\item $T =  \Gamma A_2 \Gamma^{-1} = \Gamma^{-1} A_2 \Gamma$ if and only if $\phi = \frac{1}{2} t$.
\item $\Gamma$ is an involution, making $A_2$ and $T$ unitarily involutorily similar, if and only if $\phi = t = 0$.
\end{enumerate}
\end{theorem}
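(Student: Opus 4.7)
The plan is to verify the main identity $T = \Gamma A_2 \Gamma^{-1}$ by direct computation and then address items (i), (ii), and (iii) in turn.

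First I would confirm that $\Gamma$ is unitary. Introducing the abbreviations $\alpha := \sqrt{r_1/(r_1+r_2)}$, $\beta := \sqrt{r_2/(r_1+r_2)}$ (so $\alpha^2 + \beta^2 = 1$) and $\psi := (\theta_1 - \theta_2)/2$, a column-by-column computation of $\Gamma^*\Gamma$ shows both diagonal entries equal $\alpha^2 + \beta^2 = 1$, while the off-diagonal inner product collapses to $-e^{i\psi}\alpha\beta + e^{i\psi}\alpha\beta = 0$, the $\phi$- and $t$-dependent phases canceling pairwise. Hence $\Gamma^{-1} = \Gamma^{*}$. Next I would compute $\Gamma A_2 \Gamma^{*}$ entrywise. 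Writing $\omega := (\theta_1+\theta_2)/2$, so that $\sqrt{a_1 a_2} = \sqrt{r_1 r_2}\,e^{i\omega}$, the key identities are $a_1 e^{-i\psi} = r_1 e^{i\omega}$ and $a_2 e^{i\psi} = r_2 e^{i\omega}$. With these, the $(1,1)$ entry collapses to $-(r_1 + r_2)\alpha\beta\,e^{i\omega} = -\sqrt{a_1 a_2}$ and the $(2,2)$ entry to $+\sqrt{a_1 a_2}$. The $(2,1)$ entry vanishes because its two summands, $-a_2 \alpha^2$ and $a_1\beta^2 e^{-2i\psi}$, both evaluate to $r_1 r_2 e^{i\theta_2}/(r_1+r_2)$ with opposite signs. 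The $(1,2)$ entry evaluates to $e^{i(2\phi - t + \theta_1)}(r_2 - r_1)$, matching (\ref{E: s1}).

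For item (i), I would argue by counting real degrees of freedom. When $a_1 a_2 \neq 0$ the eigenvalues $\pm\sqrt{a_1 a_2}$ are distinct; once the diagonal ordering of $T$ is fixed, the first column of any unitary similarity transformation realizing this Schur form must be a unit eigenvector of $A_2$ for $-\sqrt{a_1 a_2}$, determined up to a single real phase, and the second column is then a unit vector orthogonal to the first, free up to one further real phase. Thus the space of admissible similarity transformations is exactly two-real-dimensional, matching the two free parameters $\phi$ and $t$ of $\Gamma$. Any alternative parametrization involving strictly more \emph{independent} real variables would yield a family of dimension exceeding two, contradicting this count; hence the given decomposition attains the maximum.

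For item (ii), the equality $\Gamma A_2 \Gamma^{-1} = \Gamma^{-1} A_2 \Gamma$ is equivalent to $\Gamma^2 A_2 = A_2 \Gamma^2$, i.e., to $\Gamma^2$ commuting with $A_2$. A direct expansion gives
\begin{equation*}
\Gamma^2 = \begin{pmatrix} e^{2i\phi}\alpha^2 + e^{it}\beta^2 & e^{i(\phi+\psi)}\alpha\beta\bigl(e^{i(t-\phi)} - e^{i\phi}\bigr) \\ e^{i(t-\phi-\psi)}\alpha\beta\bigl(e^{i(t-\phi)} - e^{i\phi}\bigr) & e^{it}\beta^2 + e^{2i(t-\phi)}\alpha^2 \end{pmatrix}.
\end{equation*}
A $2 \times 2$ matrix $\begin{psmallmatrix} p & q \\ r & s \end{psmallmatrix}$ commutes with $A_2 = \begin{psmallmatrix} 0 & a_1 \\ a_2 & 0 \end{psmallmatrix}$ if and only if $p = s$ and $q a_2 = r a_1$. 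Equating the diagonal entries of $\Gamma^2$ gives $e^{2i\phi} = e^{2i(t-\phi)}$, and combined with the off-diagonal equation (which rules out the spurious branch $a = b + \pi$ when $r_1 + r_2 > 0$) this reduces to $\phi = t - \phi$, i.e., $\phi = t/2$. For item (iii), imposing $\Gamma^2 = I$ forces the off-diagonal factor $e^{i(t-\phi)} - e^{i\phi}$ to vanish (since $\alpha\beta \neq 0$ in the generic case), so $t = 2\phi$; substituting back into the diagonal entries yields $e^{2i\phi} = 1$, and in the principal branch this pins down $\phi = 0$ and hence $t = 0$.

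The main obstacle is conceptual rather than computational: the dimension-counting argument in (i) must be phrased carefully so that the eigenvector-phase freedom is unambiguously translated into two independent real parameters, and in (ii)--(iii) one must identify the correct branch among the multiple modular solutions of $e^{2i\phi} = e^{2i(t-\phi)}$. The individual entrywise computations are routine but require careful bookkeeping of the four phases $\phi$, $t$, $\theta_1$, $\theta_2$ and the derived quantities $\psi$ and $\omega$.
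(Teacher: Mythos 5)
Your proof is correct, but it runs in the opposite direction from the paper's. The paper \emph{derives} $\Gamma$: it starts from the general form of a $2\times 2$ unitary matrix $\begin{psmallmatrix} -w & z \\ z^{*}{\rm e}^{\imath t} & w^{*}{\rm e}^{\imath t}\end{psmallmatrix}$, imposes the single necessary condition that the $(2,1)$ entry of $\Gamma A_2 \Gamma^{-1}$ vanish, and solves for the modulus and phase of $z$ in terms of those of $w$; claim (i) is then justified by observing that only necessary conditions were imposed on the fully general unitary. You instead \emph{verify} the stated $\Gamma$ and $T$ by direct entrywise computation (your phase bookkeeping with $\psi$ and $\omega$ checks out, including the cancellation in the $(2,1)$ entry and the value ${\rm e}^{\imath(2\phi+\theta_1-t)}(r_2-r_1)$ of the $(1,2)$ entry), and you justify (i) by an independent dimension count: once the diagonal ordering of $T$ is fixed, the first column of any admissible unitary is a unit eigenvector determined up to one real phase and the second column up to one more, so the family is exactly two-real-dimensional. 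That count is arguably a cleaner, more self-contained argument for maximality than the paper's, which asks the reader to trust that the derivation discarded no freedom; the small price is your restriction to $a_1 a_2 \neq 0$ --- you should note that in the nilpotent case ($r_1=0$ or $r_2=0$ but not both) the relevant kernel is still one-dimensional, so the same count applies. For (ii) and (iii) the paper records only that the claims ``follow by comparing $\Gamma$ to $\Gamma^{-1}$ directly''; your explicit computation of $\Gamma^2$, the commutation criterion $p=s$, $qa_2=ra_1$ against $A_2$, and the elimination of the spurious branch ${\rm e}^{\imath(t-\phi)}=-{\rm e}^{\imath\phi}$ via the impossibility of $r_2=-r_1$ together make a more complete argument than the paper's. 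Both you and the paper read the conclusions $\phi=\tfrac12 t$ and $\phi=t=0$ modulo the inherent $2\pi$-periodicity of the phases (and, in (ii), modulo the degenerate case $r_1 r_2 = 0$ where $\Gamma^2$ is scalar), which is the only sensible reading of the statement.
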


\begin{proof}
Without loss of generality, a complex $2 \times 2$ unitary matrix has the general form
\begin{equation} \label{E: genUnitary}
\Gamma = \begin{pmatrix}
-w				&	z	\\
z^*{\rm e}^{\imath t}	&	w^*{\rm e}^{\imath t}
\end{pmatrix},
\end{equation}
where $w,z \in \mathbb{C}$, $t \in \mathbb{R}$, and $\abs{w} + \abs{z} = 1$. \cite{mA13} Let $w = r_w {\rm e}^{\imath \phi}$ and $z = r_z {\rm e}^{\imath \psi}$ for $r_w, r_z \ge 0$ and $\phi, \psi \in \mathbb{R}$, so that $r_w = \sqrt{1 - r_z^2}$. In order for $T$ to be upper triangular, which is necessary for it to be a Schur form for $A_2$, its bottom-left element must be 0. It is straightforward to show this condition implies 
\[
\begin{aligned}
r_z &= \sqrt{\frac{a_2 {\rm e}^{2 \imath \psi}}{a_1 {\rm e}^{2 \imath \phi} + a_2 {\rm e}^{2 \imath \psi}}}	\\
&= \sqrt{\frac{1}{1+\frac{r_1}{r_2}{\rm e}^{\imath (2(\phi-\psi) + (\theta_1-\theta_2))}}}.
\end{aligned}
\]		
With this expression for $r_z$, it is also straightforward to show $r_w,r_z \in [0,1]$ implies, without loss of generality, solving for $\psi$,
\[
\psi = \phi + \frac{1}{2}(\theta_1 - \theta_2).
\]
Substituting this into the expression for $r_z$ gives
\[
\begin{aligned}
r_z = \sqrt{\frac{r_2}{r_1+r_2}} \text{\hphantom{w} and \hphantom{w}} r_w = \sqrt{\frac{r_1}{r_1+r_2}}	.
\end{aligned}
\]
Substituting these expressions for $\psi$, $r_z$, and $r_w$ into the general expressions for $w$ and $z$, and then substituting $w$ and $z$ into (\ref{E: genUnitary}), gives (\ref{E: ups1}). With this, is straightforward to show $T = \Gamma A_2 \Gamma^{-1}$.

$i.$ Since this decomposition is derived by subjecting the general form of a complex $2 \times 2$ unitary matrix ($\ref{E: genUnitary}$) to only necessary conditions, this decomposition has the maximum number of degrees of freedom possible.

$ii.$ and $iii.$ Both of these statements follow straightforwardly by comparing $\Gamma$ to $\Gamma^{-1}$ directly.
\end{proof}

If $a_1 = a_2 = 0$, then $A_2$ is the zero matrix and its Schur decomposition is trivial. Notice $\phi = \frac{1}{2} t$ from part $(ii)$ does not imply $\Gamma$ is an involution, though it does imply $\Gamma^2$ commutes with both $A_2$ and $T$. From this theorem, we can see if some $2 \times 2$ square matrix is unitarily similar to $T$, then it is unitarily antidiagonalizable.

We will mention, in passing, some other nice particular solutions are given for $\{t = 0, \phi = \frac{1}{2} \theta_2\}$, $\{t = 0, \phi = -\frac{1}{2} \theta_1\}$, and $\{t = 0, \phi = \frac{1}{4} (\theta_2 - \theta_1)\}$.

An antidiagonal matrix is not always unitarily diagonalizable, but it is always unitarily quasidiagonalizable, as shown in Theorem \ref{T:permQuasi}. Even more, a unitary quasidiagonalization exists where each diagonal block is upper-triangular, making the unitary quasidiagonalization a Schur decomposition and a unitary upper bidiagonalization.

\begin{theorem} [Quasidiagonal Schur Decomposition and Unitary Quasidiagonalization of an Antidiagonal Matrix] \label{T:schur} 
Let $A$ be a complex antidiagonal matrix of size $n$ in general form given by (\ref{E:antidiagA}). A Schur decomposition for $A$ is given by $A = \Upsilon S \, \Upsilon^{-1}$, where $S$ is a quasidiagonal\footnote{see Definition \ref{D:quasidiag}} Schur form and $\Upsilon$ is the unitary similarity transformation matrix, as given below. Thus, $A$ is unitarily similar to $S$ via unitary similarity transformation matrix $\Upsilon$.

Let $a_k = r_k {\rm e}^{\imath \theta_k}$ be the polar form for $a_k \in \mathbb{C}$, let phase $t_k \in \mathbb{R}$, and let
\begin{gather} 
\upsilon_{k,l} = {\rm e}^{\frac{1}{2} \imath t_k} \sqrt{\frac{r_l}{r_k+r_{k+1}}}	\label{E: upsij}
\\
\Omega_{e,k} = \begin{pmatrix}	\label{E: evenOmg}
-\upsilon_{k,k}		&\upsilon_{k,k+1} \, {\rm e}^{\frac{1}{2} \imath (\theta_{k} - \theta_{k+1})}	\\
\upsilon_{k,k+1} \, {\rm e}^{\frac{1}{2} \imath (\theta_{k+1} - \theta_{k})}		&\upsilon_{k,k}	
\end{pmatrix}
\\
\Omega_{o,k} = \begin{pmatrix}	\label{E: oddOmg}
-\upsilon_{k,{k+1}}		&\upsilon_{k,k} \, {\rm e}^{\frac{1}{2} \imath (\theta_{k} - \theta_{k+1})}	\\
\upsilon_{k,k} \, {\rm e}^{\frac{1}{2} \imath (\theta_{k+1} - \theta_{k})}		&\upsilon_{k,k+1}	
\end{pmatrix},
\end{gather}
where $\Omega_{e,k}$ is $\Gamma$ from Theorem \ref{C:2by2Schur} under substitution $t \mapsto t_k$, with $\phi = \frac{1}{2} t_{k}$, and substitution for subscripts given by $1 \mapsto k$ and $2 \mapsto k+1$. An explicit Schur decomposition for $A$ is given by $A = \Upsilon S \, \Upsilon^{-1}$ as follows. For even size $n$,
\begin{equation} \label{E: evenUpsilon}
\Upsilon =    \begin{pmatrix}
0						&	0 										&\vphantom{\sqrt{\frac{a_1}{a_1+a_2}}}		&-\upsilon_{n-1,n-1}&\upsilon_{n-1,n} \, {\rm e}^{\frac{1}{2} \imath (\theta_{n-1} - \theta_n)}		\\
0						&	0										&\iddots\vphantom{\sqrt{\frac{a_1}{a_1+a_2}}}	&0								&0										\\
-\upsilon_{1,1}		&\upsilon_{1,2} \, {\rm e}^{\frac{1}{2} \imath (\theta_1 - \theta_2)}	&\vphantom{\sqrt{\frac{a_1}{a_1+a_2}}}		&\vdots							&\vdots									\\
\upsilon_{1,2} \, {\rm e}^{\frac{1}{2} \imath (\theta_2 - \theta_1)}		&\upsilon_{1,1}		&\vphantom{\sqrt{\frac{a_1}{a_1+a_2}}}		&\vdots							&\vdots									\\
0						&	0										&\ddots\vphantom{\sqrt{\frac{a_1}{a_1+a_2}}}	&0								&0										\\
0						&	0										&\vphantom{\sqrt{\frac{a_1}{a_1+a_2}}}		&\upsilon_{n-1,n} \, {\rm e}^{\frac{1}{2} \imath (\theta_n - \theta_{n-1})}	&\upsilon_{n-1,n-1}	
\end{pmatrix},
\end{equation}
where, for any transpose pair $a_k,a_{k+1}$ such that $a_k = a_{k+1} = 0$, the submatrix $\Omega_{e,k}$ of $\Upsilon$ given by (\ref{E: evenOmg}) is substituted with the choice of any nonsingular square matrix of size 2, and
   \begin{equation} \label{E: evenS}
S =    \begin{pmatrix}
-\sqrt{a_1a_2}				&	(r_2-r_1){\rm e}^{\imath t_1}	&		&						&								\\
0						&	\sqrt{a_1a_2}				&		&\hphantom{\sqrt{a_{n-1}a_n}}	&\hphantom{\sqrt{a_{n-1}a_n}}			\\
						&							&\ddots	&						&								\\
\hphantom{\sqrt{a_{n-1}a_n}}	&\hphantom{\sqrt{a_{n-1}a_n}}		&		&-\sqrt{a_{n-1}a_n}			&(r_n-r_{n-1}){\rm e}^{\imath t_{n-1}}	\\
						&							&		&0						&\sqrt{a_{n-1}a_n}		
\end{pmatrix}.
\end{equation}
For odd size $n$ and any choice of nonzero $\omega \in \mathbb{C}$, 
\begin{equation} \label{E: oddUpsilon}
\Upsilon =    \begin{pmatrix}
0		&	0						&	0 										&\vphantom{\sqrt{\frac{a_1}{a_1+a_2}}}		&-\upsilon_{n-1,n}&\upsilon_{n-1,n-1} \, {\rm e}^{\frac{1}{2} \imath (\theta_{n-1} - \theta_n)}		\\
0		&	0						&	0										&\iddots\vphantom{\sqrt{\frac{a_1}{a_1+a_2}}}	&0								&0										\\
0		&	-\upsilon_{2,3}		&\upsilon_{2,2} \, {\rm e}^{\frac{1}{2} \imath (\theta_2 - \theta_3)}	&\vphantom{\sqrt{\frac{a_1}{a_1+a_2}}}		&0								&0										\\
\omega	&	0						&	0										&\hdots\vphantom{\sqrt{\frac{a_1}{a_1+a_2}}}	&\vdots							&\vdots									\\
0		&	\upsilon_{2,2} \, {\rm e}^{\frac{1}{2} \imath (\theta_3 - \theta_2)}		&\upsilon_{2,3}		&\vphantom{\sqrt{\frac{a_1}{a_1+a_2}}}		&0								&0										\\
0		&	0						&	0										&\ddots\vphantom{\sqrt{\frac{a_1}{a_1+a_2}}}	&0								&0										\\
0		&	0						&	0										&\vphantom{\sqrt{\frac{a_1}{a_1+a_2}}}		&\upsilon_{n-1,n-1} \, {\rm e}^{\frac{1}{2} \imath (\theta_n - \theta_{n-1})}	&\upsilon_{n-1,n}	
\end{pmatrix}
\end{equation}
where, for any transpose pair $a_k,a_{k+1}$ such that $a_k = a_{k+1} = 0$, the submatrix $\Omega_{o,k}$ of $\Upsilon$ given by (\ref{E: oddOmg}) is substituted with the choice of any nonsingular square matrix of size 2,
and
   \begin{equation} \label{E: oddS}
S =    \begin{pmatrix}
a_1		& 						&						&		&						&								\\
		& -\sqrt{a_3a_2}			&(r_2-r_3	){\rm e}^{\imath t_2}	&		&\hphantom{\sqrt{a_na_{n-1}}}	&\hphantom{\sqrt{a_na_{n-1}}}			\\
		&	0					&\sqrt{a_3a_2}				&		&						&								\\
		&						&						&\ddots	&						&								\\
		&\hphantom{\sqrt{a_na_{n-1}}}	&\hphantom{\sqrt{a_na_{n-1}}}	&		&-\sqrt{a_na_{n-1}}			&(r_{n-1}-r_n){\rm e}^{\imath t_{n-1}}	\\
		&						&						&		&0						&\sqrt{a_na_{n-1}}		

\end{pmatrix}.
\end{equation}
\end{theorem}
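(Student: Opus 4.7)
The proof will combine Theorem~\ref{T:permQuasi} with the $2 \times 2$ Schur decomposition of Theorem~\ref{C:2by2Schur} in a compose-and-embed argument. First, I would invoke Theorem~\ref{T:permQuasi} to write $A = P Q P^{-1}$, where $P$ is the explicit permutation matrix and $Q$ is the hollow quasidiagonal direct sum of the $2 \times 2$ blocks $Q_k = \begin{psmallmatrix} 0 & a_k \\ a_{k+1} & 0 \end{psmallmatrix}$ (with an additional $1 \times 1$ center block $(a_1)$ in the odd case). Since $P$ is unitary and every unitary similarity composes to a unitary similarity, it then suffices to Schur-decompose each diagonal block of $Q$ separately.

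Next I would apply Theorem~\ref{C:2by2Schur} to each nonzero block $Q_k$ with the particular parameter choice $\phi = \tfrac{1}{2}t_k$ and re-indexed subscripts $1 \mapsto k$, $2 \mapsto k+1$. This yields a unitary $\Omega_{e,k}$ of the form (\ref{E: evenOmg}) satisfying $\Omega_{e,k}\, Q_k\, \Omega_{e,k}^{-1} = \begin{psmallmatrix} -\sqrt{a_k a_{k+1}} & (r_{k+1}-r_k)\,{\rm e}^{\imath t_k} \\ 0 & \sqrt{a_k a_{k+1}} \end{psmallmatrix}$, which is upper triangular with the eigenvalues of $Q_k$ on the diagonal (as required by Definition~\ref{D:schurDef}, and consistent with Theorem~\ref{T:specAntiD}). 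For zero blocks $a_k = a_{k+1} = 0$, the formula (\ref{E: evenOmg}) is undefined because of the $r_k + r_{k+1}$ denominator, but $Q_k$ is already the zero matrix, so any nonsingular $2 \times 2$ matrix conjugates it to itself; this is the source of the stated freedom. In the odd case, the central $1 \times 1$ block $(a_1)$ is trivially in Schur form, and any nonzero scalar $\omega$ serves as its $1 \times 1$ unitary similarity factor.

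Setting $\Omega = \bigoplus_k \Omega_{e,k}$ (respectively $\Omega = (\omega) \oplus \bigoplus_k \Omega_{o,k}$ for odd $n$) gives a block-diagonal unitary such that $S = \Omega Q \Omega^{-1}$ is the quasidiagonal Schur form displayed in (\ref{E: evenS}) or (\ref{E: oddS}). Composing with the permutation yields $A = (P\Omega^{-1})\, S\, (P\Omega^{-1})^{-1}$, so that $\Upsilon = P\Omega^{-1}$ is the desired unitary similarity transformation. That $S$ is quasidiagonal and upper quasitriangular is immediate, that $\Upsilon$ is unitary follows from $P$ being unitary and each $\Omega_{e,k}$ (or $\Omega_{o,k}$) being unitary by Theorem~\ref{C:2by2Schur}, and that the decomposition is an upper bidiagonalization follows from each diagonal Schur block having a single superdiagonal entry.

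The main obstacle will be verifying that the product $P\Omega^{-1}$ assumes the precise explicit form (\ref{E: evenUpsilon}) or (\ref{E: oddUpsilon}) stated in the theorem. This is a bookkeeping step: multiplication by $P$ on the left reshuffles the rows of the block-diagonal unitary $\Omega^{-1}$ into the original ``antidiagonal'' pattern, so that each $2 \times 2$ block $\Omega_{e,k}$ (or $\Omega_{o,k}$) ends up occupying exactly the rows and columns where the transpose pair $a_k, a_{k+1}$ lives in $A$. I would perform this verification inductively on $n$, mirroring the inductive block-extension argument from the proof of Theorem~\ref{T:permQuasi}(a): the base case $n = 2$ reduces to Theorem~\ref{C:2by2Schur} directly, and the inductive step appends the outermost transpose pair $(a_{n-1}, a_n)$ by sandwiching $\Omega_{e,n-1}$ between the two new rows/columns, exactly as the inductive construction of $P_{n+2}$ from $P_n$ sandwiches a new $2 \times 2$ identity block. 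The odd case is handled analogously, with the only modifications being the relabeling of the central $1 \times 1$ block and the freedom of $\omega$.
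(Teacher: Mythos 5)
Your proposal is correct and follows essentially the same route as the paper's proof: quasidiagonalize $A$ via Theorem \ref{T:permQuasi}, apply the $2\times2$ Schur decomposition of Theorem \ref{C:2by2Schur} blockwise with $\phi=\tfrac{1}{2}t_k$, form $\Omega=\bigoplus_k\Omega_{e,k}$ (resp.\ $\Omega_{o,k}$), and compose to obtain $\Upsilon=P\,\Omega^{-1}$, with unitarity following from closure of unitaries under direct sums and products. The only difference is your proposed inductive verification that $P\,\Omega^{-1}$ matches the displayed form of $\Upsilon$, a bookkeeping step the paper leaves implicit.
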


\begin{proof}
Let $n$ be even, and let $Q$ be the permutation-quasidiagonalization of $A$ for even $n$ from Theorem \ref{T:permQuasi}. Because $Q$ is a quasidiagonal matrix consisting of only $2 \times 2$ blocks, $Q$ can be decomposed into a direct sum of $2 \times 2$ matrices $Q_k = \begin{psmallmatrix}
0	&a_{k}	\\
a_{k+1}	& 0
\end{psmallmatrix}$ so that $Q = \bigoplus Q_k^{}$.
Now define $\Omega_e = \bigoplus \Omega_{e,k}$, and
\begin{equation}
\begin{aligned}
S_k &= \Omega_{e,k}^{} Q_k^{} \, \Omega_{e,k}^{-1} 	\\
&= \begin{pmatrix}
-\sqrt{a_{k} a_{k+1}}	& (r_{k+1} - r_{k}	){\rm e}^{\imath t_{k}}		\\
0		 		& \sqrt{a_{k} a_{k+1}}
\end{pmatrix}
\end{aligned}
\end{equation}
so that $S = \bigoplus S_k^{}$.

Since $Q_k$, $\Omega_{e,k}$, and $S_k$ all have the same dimensions, direct sums of these matrices commute with their products.\footnote{Equivalently, the matrix-theoretic way to think of this is that a product of block diagonal matrices is the block-wise product of the blocks.} Thus, 
\[
\begin{aligned}
S &= \hspace{4 pt} \bigoplus S_k^{}	\\
&= \hspace{4 pt} \bigoplus \Omega_{e,k}^{} Q_k^{} \, \Omega_{e,k}^{-1} 			\\
&= (\bigoplus \Omega_{e,k}^{})(\bigoplus Q_k^{})(\bigoplus \Omega_{e,k}^{-1})	\\
&= \Omega_e Q \Omega_e^{-1}	\\
&= \Omega_e P^{-1} A \, P \Omega_e^{-1}	\\
&= (P \Omega_e^{-1})^{-1} A \, (P \Omega_e^{-1})	\\
&= \Upsilon^{-1} A \, \Upsilon
\end{aligned}
\]
where $Q = P^{-1} A P$ follows from the quasidiagonalization of $A$ in Theorem \ref{T:permQuasi} and $\Upsilon = P \, \Omega_e^{-1}$. Therefore, we can conclude $A$ is similar to $S$.

Now, we know by definition, $\Omega_{e,k}$ is $\Gamma$ from Theorem \ref{C:2by2Schur} with $\phi = \frac{1}{2} t_{k}$ and a relabeling of subscripts. Since $\Gamma$ is unitary for all $\phi,t \in \mathbb{R}$, $\Omega_{e,k}$ is unitary for all $t_k \in \mathbb{R}$. Since direct sums of unitary matrices are unitary, $\Omega_e$ is unitary for all $t_k \in \mathbb{R}$ as well, and so must be its inverse $\Omega_e^{-1}$. Finally, $P$ is unitary because it is a permutation matrix, and products of unitary matrices are unitary, so $\Upsilon = P \, \Omega_e^{-1}$ is unitary.

Therefore, since $A = \Upsilon S \, \Upsilon^{-1}$ where $\Upsilon$ is unitary and $S$ is quasidiagonal and upper-triangular, $S$ is a quasidiagonal Schur form for $A$ and $A = \Upsilon S \, \Upsilon^{-1}$ is a Schur decomposition for $A$.

The proof when $n$ is odd is essentially the same where $\Omega_{o,k}$ is used instead of $\Omega_{e,k}$. In this decomposition, $\omega$ from $\Upsilon$ and $\omega^{-1}$ from $\Upsilon^{-1}$ cancel out, leaving $a_1$. The decomposition is independent of the value of $a_1$ and of $\omega$, except $\omega$ must be nonzero since $\Upsilon$ must be nonsingular. Thus, as in Theorem \ref{T:antiEigendec}, the decomposition does not depend on center element $a_1$, but recall $a_1$ is an eigenvalue of $A$, so it does affect the invertibility of $A$. 
\end{proof}

Notice, for even $n$, $k$ only assumes odd values, and for odd $n$, k only assumes even values. For any given $k$, the $l$ in $\upsilon_{k,l}$ only assumes the values $k$ or $k+1$. Also notice $t_k$ is an extra degree of freedom for all $k$ and can be chosen to be any real number, as it does not depend on $A$. A nice particular solution results from setting $t_k = 0$ for all $k$, giving some simplification to $\Upsilon$ and $S$.

Notice also when both elements of each transpose pair have equal modulus, the Schur form of $A$ is equal to the diagonalization of $A$ given by the eigendecomposition in Theorem \ref{T:antiEigendec}. This is equivalent to $A$ being normal, which is in agreement with Theorem \ref{T:unitDiagAntiD}.

It is important to note, if we did not set $\phi = \frac{1}{2} t_{k}$, $\Upsilon$ would still by unitary and $S$ would still be quasidiagonal, but the decomposition would not necessarily be a Schur decomposition because $S$ would no longer necessarily be upper triangular.

A corollary extending the results of Theorem \ref{T:schur} from antidiagonal matrices to unitarily antidiagonalizable matrices follows immediately.

\begin{corollary} [Quasidiagonal Schur Decomposition of a Unitarily Antidiagonalizable Matrix] \label{C:antidiagonalizableSchur}
Every unitarily antidiagonalizable matrix has a quasidiagonal Schur form.
\end{corollary}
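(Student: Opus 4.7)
The plan is to chain together two existing pieces: the hypothesis that $M$ is unitarily antidiagonalizable (giving $M = U A U^{-1}$ for some antidiagonal matrix $A$ and some unitary $U$), and Theorem \ref{T:schur}, which hands us, for any complex antidiagonal matrix $A$, an explicit Schur decomposition $A = \Upsilon S \Upsilon^{-1}$ in which $\Upsilon$ is unitary and $S$ is quasidiagonal (upper-triangular with all blocks of size at most $2$). The strategy is simply to substitute the second into the first and observe that the resulting conjugating operator is still unitary.

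More concretely, I would first invoke Definition \ref{D:antidiag} to write $M = U A U^{-1}$ with $U$ unitary and $A$ antidiagonal. Next, I would apply Theorem \ref{T:schur} to $A$, obtaining unitary $\Upsilon$ and quasidiagonal, upper-triangular $S$ with $A = \Upsilon S \, \Upsilon^{-1}$. Substituting yields
\[
M = U \Upsilon S \, \Upsilon^{-1} U^{-1} = (U \Upsilon) \, S \, (U \Upsilon)^{-1}.
\]
Since the product of two unitary matrices is unitary, $U\Upsilon$ is unitary. Because $S$ is simultaneously upper triangular and quasidiagonal, the decomposition $M = (U\Upsilon) \, S \, (U\Upsilon)^{-1}$ is a Schur decomposition of $M$ whose Schur form $S$ is quasidiagonal, which is the claim.

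There is essentially no obstacle here; the content of the corollary is packaged entirely inside Theorem \ref{T:schur}, and the proof reduces to the elementary closure of the unitary group under multiplication. The only thing worth flagging is that one should confirm the diagonal entries of $S$ are indeed the eigenvalues of $M$ (so that $S$ qualifies as a Schur form in the sense of Definition \ref{D:schurDef}); this is immediate because unitary similarity preserves the spectrum, and the entries $\pm\sqrt{a_k a_{k+1}}$ (and the center element for odd $n$) appearing along the diagonal of $S$ in (\ref{E: evenS}) and (\ref{E: oddS}) are exactly the eigenvalues of $A$ listed in Theorem \ref{T:specAntiD}, hence of $M$ by Corollary \ref{C:specAntiDble}.
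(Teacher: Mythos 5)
Your proof is correct and follows the same route as the paper: compose the unitary antidiagonalization $M = U A U^{-1}$ with the quasidiagonal Schur decomposition $A = \Upsilon S \, \Upsilon^{-1}$ from Theorem \ref{T:schur} and use closure of unitaries under products. Your added remark checking that the diagonal of $S$ carries the eigenvalues of $M$ is a sensible (if optional) extra verification that the paper leaves implicit.
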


\begin{proof}
Let $M$ be a matrix that is unitarily antidiagonalizable to some complex antidiagonal matrix $A$. Since $A$ is has the quasidiagonal Schur form $S$ from Theorem \ref{T:schur}, and since compositions of unitary transformations are unitary, $M$ must have Schur form $S$ as well.
\[
\begin{tikzcd}
M \arrow[d, "unitary"' {xshift = -3 ex, yshift = 1ex}, "similarity"' {xshift = -2ex, yshift = -1 ex}] \arrow[rd,"", "unitary" {xshift = 4.6 ex, yshift = 2.2 ex}, "antidiagonalization" yshift=0.5ex]  &                  \\
S                            & A    \arrow[l, "unitary" yshift=-1.5ex,"similarity" yshift=-3.2ex]  
\end{tikzcd}
\]
\end{proof}

The unitary similarity direct sum decomposition of an antidiagonal matrix also follows from Theorem \ref{T:schur}.

\begin{corollary} [Unitary Similarity Direct Sum Decomposition of an Antidiagonal Matrix] \label{C:unitSimDecomp}
 Let $A$ be a complex antidiagonal matrix of size $n$ with center element $c$ if $n$ is odd. Let $\mathcal{\hat{T}}$ be the set of transpose pairs $\tau = \{\tau_1,\tau_2\}$ in $A$. A direct sum decomposition is given by
\begin{equation} \label{unitDecomp}
A \, \, \unitsim \, \, \mathfrak{S}
\end{equation}
where
\[
\mathfrak{S} = \begin{dcases} 
       \hspace{11 pt} \bigoplus\limits_{\tau \in \mathcal{\hat{T}}} \begin{psmallmatrix} -\sqrt{\tau_1\tau_2} & (\abs{\tau_2}-\abs{\tau_1}){\rm e}^{\imath t_\tau} \\ 0 & \sqrt{\tau_1\tau_2} \end{psmallmatrix}  								& \text{even n} \\
       \smashoperator[r]{\bigoplus\limits_{\tau \in \mathcal{\hat{T}} \setminus \{c,c\}}} \begin{psmallmatrix} -\sqrt{\tau_1\tau_2} & (\abs{\tau_2}-\abs{\tau_1}){\rm e}^{\imath t_\tau} \\ 0 & \sqrt{\tau_1\tau_2} \end{psmallmatrix}  \oplus c \, (1)		& \text{odd n}
       \end{dcases}
\]

for any choice of $t_\tau \in \mathbb{R}$ for all $\tau$, and where $(1)$ is the identity matrix of size 1.
\end{corollary}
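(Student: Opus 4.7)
The plan is to recognize this corollary as an abstract algebraic reformulation of Theorem \ref{T:schur}, parallel to how Theorem \ref{T:permQuasi}$(b)$ restates Theorem \ref{T:permQuasi}$(a)$ and how Theorem \ref{T:jorAntiD}$(b)$ restates Theorem \ref{T:jorAntiD}$(a)$. So the strategy is simply to invoke Theorem \ref{T:schur} and then translate the indexed quasidiagonal Schur form $S$ into the unordered direct sum $\mathfrak{S}$.

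First I would appeal to Theorem \ref{T:schur} to obtain the unitary similarity $A = \Upsilon S \Upsilon^{-1}$, with $\Upsilon$ unitary. Then, since $S$ is block diagonal with $2 \times 2$ upper triangular blocks (plus a single $1 \times 1$ block in the odd case, equal to $c = a_1$), I would write $S$ as a direct sum and identify each $2 \times 2$ diagonal block of $S$, namely $\begin{psmallmatrix} -\sqrt{a_k a_{k+1}} & (r_{k+1}-r_k){\rm e}^{\imath t_k} \\ 0 & \sqrt{a_k a_{k+1}} \end{psmallmatrix}$ (indexing from (\ref{E: evenS}) or (\ref{E: oddS})), with the summand $\begin{psmallmatrix} -\sqrt{\tau_1\tau_2} & (\abs{\tau_2}-\abs{\tau_1}){\rm e}^{\imath t_\tau} \\ 0 & \sqrt{\tau_1\tau_2} \end{psmallmatrix}$ corresponding to the transpose pair $\tau = \{a_k, a_{k+1}\} = \{\tau_1, \tau_2\}$, upon the assignment $\tau_1 = a_k$, $\tau_2 = a_{k+1}$, $t_\tau = t_k$. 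The identification $r_j = \abs{a_j} = \abs{\tau_j}$ is immediate from the polar form used in Theorem \ref{T:schur}.

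Next I would note that $S$ uniquely determines and is uniquely determined by the direct sum $\mathfrak{S}$ up to permutation of the diagonal blocks, since the direct summands in $\mathfrak{S}$ are indexed by the unordered set $\mathcal{\hat{T}}$ of transpose pairs while the blocks in $S$ are indexed by the ordered enumeration of transpose pairs in $A$. Permutations of the diagonal blocks of $S$ correspond bijectively to reorderings of $\mathcal{\hat{T}}$ treated as an ordered multiset, and any such reordering can be effected by a permutation-similarity (hence unitary similarity) applied to $S$. In the odd case, the center element $c$ contributes the singleton $1 \times 1$ summand $c\,(1)$, and the remaining transpose pairs give the $2 \times 2$ summands; the set notation $\mathcal{\hat{T}} \setminus \{c, c\}$ just excludes the degenerate ``self-paired'' center.

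There is no real obstacle here beyond bookkeeping; the substantive content has already been established in Theorem \ref{T:schur}. The only minor subtlety is the branch of $\sqrt{\tau_1 \tau_2}$: the sign is absorbed by the freedom to relabel $\tau_1 \leftrightarrow \tau_2$ within the unordered pair and by the choice of square root, which leaves the block unitarily similar to itself (a swap of the diagonal entries of this upper-triangular block can be realized by a further unitary transformation on the $2 \times 2$ summand when needed). Thus the unitary similarity $A \unitsim \mathfrak{S}$ in (\ref{unitDecomp}) follows, completing the proof.
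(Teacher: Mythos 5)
Your proposal is correct and follows essentially the same route as the paper: the paper's proof likewise treats the corollary as an abstract restatement of Theorem \ref{T:schur}, observing that $S$ determines and is determined by $\mathfrak{S}$ up to a permutation of the diagonal blocks, in one-to-one correspondence with orderings of $\mathcal{\hat{T}}$. Your additional remarks on the square-root branch and the $\tau_1 \leftrightarrow \tau_2$ relabeling are harmless extra bookkeeping beyond what the paper records.
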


\begin{proof}
This is essentially a restatement of Theorem \ref{T:schur}; $S$ from Theorem \ref{T:schur} uniquely determines and is uniquely determined by $\mathfrak{S}$ up to a permutation of the diagonal blocks, where permutations of the diagonal blocks are in one-to-one correspondence with permutations of $\mathcal{\hat{T}}$ treated as an ordered multiset.
\end{proof}

We can also now provide the unitary similarity direct sum decomposition of a unitarily antidiagonalizable matrix.

\begin{corollary} [Unitary Similarity Direct Sum Decomposition of a Unitarily Antidiagonalizable Matrix] \label{C:unitaryDecompAntidiagonalizable}
Every unitarily antidiagonalizable matrix $M$ is unitarily similar to a direct sum decomposition into traceless, upper triangular $2 \times 2$ matrices with an additional $1 \times 1$ matrix if $M$ is of odd size given by (\ref{unitDecomp}). The multiset union, where multiplicities are additive, of the spectra of the direct summands is equal to $spec(M)$.
\end{corollary}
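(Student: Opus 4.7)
The plan is to reduce this statement to Corollary \ref{C:unitSimDecomp} by exploiting the fact that compositions of unitary similarities are unitary similarities, in direct analogy with Corollary \ref{C:antidiagonalizableSchur}. First, I would unpack the hypothesis: since $M$ is unitarily antidiagonalizable, there exists a unitary $U$ and a complex antidiagonal matrix $A$ with $M = U A U^{-1}$. If $M$ has odd size, $A$ has odd size as well and I would denote its center element by $c$; if $M$ has even size, $A$ is hollow. In either case, $A$ is a complex antidiagonal matrix of the size of $M$.

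Next I would invoke Corollary \ref{C:unitSimDecomp} directly: there exists a unitary matrix $W$ (for instance $W = P \Omega_{e}^{-1}$ or $W = P \Omega_{o}^{-1}$ from the construction in Theorem \ref{T:schur}) so that $A = W \mathfrak{S} W^{-1}$, where $\mathfrak{S}$ is the direct sum given in the statement of Corollary \ref{C:unitSimDecomp}. Composing these two unitary similarities, I get $M = (U W) \mathfrak{S} (U W)^{-1}$, and $U W$ is unitary since the unitary matrices form a group under multiplication. Hence $M \unitsim \mathfrak{S}$, which is exactly the direct sum decomposition claimed in (\ref{unitDecomp}).

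Two small observations finish the argument. First, each $2 \times 2$ summand $\begin{psmallmatrix} -\sqrt{\tau_1\tau_2} & (\abs{\tau_2}-\abs{\tau_1}){\rm e}^{\imath t_\tau} \\ 0 & \sqrt{\tau_1\tau_2} \end{psmallmatrix}$ is manifestly upper triangular and traceless, and when $M$ is of odd size there is an additional $1 \times 1$ summand $c$ (not required to be traceless), matching the structural claim of the corollary. Second, the spectrum of a block diagonal matrix is the multiset union, with additive multiplicities, of the spectra of its diagonal blocks; the spectrum of each upper triangular $2 \times 2$ block is $\{-\sqrt{\tau_1\tau_2},\sqrt{\tau_1\tau_2}\}$ read off the diagonal, and the odd case contributes the singleton $\{c\}$. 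Since unitary similarity preserves the spectrum, this multiset union equals $\mathrm{spec}(M)$, which can also be cross-checked against Corollary \ref{C:specAntiDble} applied to $A$.

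I do not anticipate any substantive obstacle: the content of the corollary is essentially the observation that unitary antidiagonalizability is transported through the unitary similarity of Corollary \ref{C:unitSimDecomp}. The only mild care is in noting that the freedom in the parameters $t_\tau$ (and, for odd $n$, the choice of $\omega$ and the completions at zero transpose pairs) from Theorem \ref{T:schur} does not affect the validity of the direct sum decomposition; any admissible choice yields a valid witness of the unitary similarity.
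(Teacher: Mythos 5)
Your proposal is correct and follows essentially the same route as the paper: the paper disposes of this corollary by calling it a restatement of Corollary \ref{C:antidiagonalizableSchur}, i.e., by composing the unitary antidiagonalization $M = U A U^{-1}$ with the unitary similarity of $A$ to the quasidiagonal Schur form of Theorem \ref{T:schur} (equivalently, to $\mathfrak{S}$ of Corollary \ref{C:unitSimDecomp}), exactly as you do. Your explicit verification of the tracelessness of the $2\times 2$ summands and of the spectral claim simply spells out details the paper leaves implicit.
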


\begin{proof}
This is essentially an abstract algebraic restatement of Corollary \ref{C:antidiagonalizableSchur}
\end{proof}

\section{Acknowledgements} \label{Sect:Acknowledgements}

I am grateful to the intellectual environment provided by \emph{The Symposium: Philosophy Community of Chicago} and for inspiration by Edward Mogul (Loyola University). I would like to express my gratitude to Stephen Walker (University of Chicago), as the idea for this paper can be traced back to a discussion on Buddhist ontology. I thank the staff of \emph{The Violet Hour} for hosting my thoughts on this paper, fueled by various libations. Finally, I express my gratitude for the boundless support and patience of my family, without whom neither this paper, nor I, would exist.

\begin{center}
\emph{He didn't answer. And it's not just that he didn't answer, he didn't know how to answer. --Zhuangzi}
\end{center}

\end{document}